
\documentclass{amsart}
\usepackage{latexsym,amssymb}
\usepackage{eucal}
\usepackage{mathrsfs}
\usepackage[all]{xy}
\usepackage{color}
\usepackage{tikz}
\usepackage[hypertex]{hyperref}
\setcounter{tocdepth}{1}

\newtheorem{thm}{Theorem}[section]
\newtheorem{lem}[thm]{Lemma}
\newtheorem{prop}[thm]{Proposition}
\newtheorem{cor}[thm]{Corollary}

\theoremstyle{definition}
\newtheorem{defn}[thm]{Definition}
\theoremstyle{remark}
\newtheorem{rmk}[thm]{Remark}
\newtheorem{ex}[thm]{Example}
\newtheorem{exs}[thm]{Examples}

\newtheorem{notn}[thm]{Notation}
\newtheorem{term}[thm]{Terminology}

\numberwithin{equation}{section}

\DeclareMathAlphabet{\mathbfit}{OT1}{cmr}{bx}{it}


\newcommand{\cat}[1]{\mathbf{{#1}}}

\newcommand{\op}[1]{\mathscr{#1}}


\newcommand\Om{\Omega}
\newcommand{\im}{\operatorname{Im}}
\newcommand{\ob}{\operatorname{Ob}}
\newcommand{\mor}{\operatorname{Mor}}

\newcommand\egal[2]{\overset {#1}{\underset {#2}\rightrightarrows }}
\newcommand\adjunct[2]{\overset {#1}{\underset {#2}\rightleftarrows }}
\newcommand{\ve}{\varepsilon}
\newcommand\ma{\mathcal M_{A}}
\newcommand\mav{\mathcal M_{A}^{V}}
\newcommand\ch{\cat {Ch}_{R}^{\geq 0}}

\newcommand{\K}{\mathbb K}
\newcommand{\T}{\mathbb T}

\newcommand{\coker}{\operatorname{coker}}

\newcommand{\id}{\text{Id}}

\newcommand{\W}{\mathsf {WE}}
\newcommand{\C}{\mathsf {C}}

\renewcommand\path{\operatorname{Path}}
\newcommand\si{s^{-1}}
\renewcommand\hom{\operatorname{Hom}}

\begin{document}

\title[The homotopy theory of coalgebras over a comonad]{The homotopy theory of coalgebras over a comonad}

\author{Kathryn Hess}
\author{Brooke Shipley}

\address{MATHGEOM \\
    \'Ecole Polytechnique F\'ed\'erale de Lausanne \\
    CH-1015 Lausanne \\
    Switzerland}
    \email{kathryn.hess@epfl.ch}

\thanks{This material is based upon work by the second author supported by the National Science Foundation under Grants No.~0706877 and No.~1104396. Both authors are deeply grateful to Prof.~Peter May and the University of Chicago for the invitations that enabled this collaboration.}
\address{Department of Mathematics, Statistics and Computer Science, University of Illinois at
Chicago, 508 SEO m/c 249,
851 S. Morgan Street,
Chicago, IL, 60607-7045, USA}
    \email{bshipley@math.uic.edu}

    \subjclass[2000]{Primary 16T15, 18C15, 55U35; Secondary 18G55, 55U15 }
    \keywords{Comonad, model category, coring}
    \date{\today}

\begin{abstract}  Let $\K$ be a comonad on a model category $\cat M$. We provide conditions under which the associated category $\cat M_{\K}$ of $\K$-coalgebras admits a model category structure such that the forgetful functor $\cat M_{\K}\to \cat M$ creates both cofibrations and weak equivalences.  

We provide concrete examples that satisfy our conditions and are relevant in descent theory and in the theory of Hopf-Galois extensions.  These examples are specific instances of the following categories of comodules over a coring (co-ring).  
For any semihereditary commutative ring $R$, let $A$ be a dg $R$-algebra  that is homologically simply connected. Let $V$ be an $A$-coring that is semifree as a left $A$-module on a degreewise $R$-free, homologically simply connected graded module of finite type. We show that there is a model category structure on the category $\ma$ of right $A$-modules satisfying the conditions of our existence theorem with respect to the comonad $-\otimes _{A}V$ and conclude that the category $\mav$ of $V$-comodules in $\ma$ admits a model category structure of the desired type.   Finally, under extra conditions on $R$, $A$, and $V$, we describe fibrant replacements in $\mav$ in terms of a generalized cobar construction. 
\end{abstract}

\maketitle
 \tableofcontents

 \section{Introduction}
Let $\cat M$ be a model category, and let $\T$ be a monad acting on $\cat M$.  There are well known and very useful conditions under which it is possible to transfer the model category structure from $\cat M$ to the category $\cat M^{\T}$ of $\T$-algebras in $\cat M$ so that the forgetful functor $\cat M^{\T}\to \cat M$ creates both weak equivalences and fibrations \cite[Lemma 2.3]{schwede-shipley}.    In particular, the category $\cat M$ should be cofibrantly generated for the results of \cite{schwede-shipley} to be applicable. 

Let $\K$ be a comonad acting on $\cat M$. Dualizing the hypotheses of Lemma 2.3 in \cite {schwede-shipley} does not provide  realistic conditions under which to transfer the model category structure from $\cat M$ to the category $\cat M_{\K}$ of $\K$-coalgebras, primarily because ``cosmall'' objects, and thus fibrantly generated model categories, are rare.   To avoid this problem, we instead generalize \cite [Section 2]{hess:hhg} and take an approach 
that  is inspired by the construction of factorizations and liftings by induction on degree, which is familiar to practitioners of rational and algebraic homotopy theory.  As long as the class of weak equivalences in $\cat M$ admits a filtration by ``$n$-equivalences'' that are compatible in a reasonable way with the comonad $\K$ (cf.~axioms (WE1) and (K0)-(K6) in section 5), we can guarantee the existence of a model category structure on $\cat M_{\K}$ such that the forgetful functor $\cat M_{\K}\to \cat M$ creates both weak equivalences and cofibrations. One advantage to our approach is that it enables us, under reasonable conditions, to describe fibrant objects and fibrations explicitly, rather than simply characterizing them in terms of the right lifting property.

Our main theorem (Theorem \ref{thm:bigthm}) could certainly easily be dualized, giving rise to an existence theorem for  model category structure on $\cat M^{\T}$ such that the forgetful functor $\cat M^{\T}\to \cat M$ creates both weak equivalences and fibrations, for nice enough monads $\T$, even if $\cat M$ is not cofibrantly generated.   Such a theorem would be complementary  to the results in \cite{schwede-shipley}.

\subsection*{Organization of the paper} We sketch the basic theory of comonads and their coalgebras  in the next section of this paper.  In particular we recall conditions under which categories of coalgebras are complete (Propositions \ref{prop:barr-wells} and \ref{prop:adamek}).  Since our main theorem is easiest to apply when the underlying model category is \emph{injective}, i.e., when its cofibrations are exactly the monomorphisms, we devote section 3 to  proving an existence result for injective model category structures (Theorem \ref{thm:icm}), which we then apply to showing that, for any commutative ring $R$, the category $\ma$ of right modules over any differential graded (dg) $R$-algebra $A$ admits an injective model category structure, in which the weak equivalences are the quasi-isomorphisms (Proposition \ref{prop:moda}).   

In section 4 we recall from \cite{hess:hhg} the notion of a Postnikov presentation of a model category and the related general existence theorem for model category structures in which the cofibrations and weak equivalences are created by a left adjoint (Theorem \ref{thm:postnikov}).  We can then state and prove our main theorem (Theorem \ref{thm:bigthm}) in section 5, providing conditions on $\cat M$ and $\K$ under which the category $\cat M_{\K}$ of $\K$-coalgebras admits a model category structure such that the forgetful functor $\cat M_{\K}\to \cat M$ creates both cofibrations and weak equivalences.   We show, moreover, that if $\cat M$ satisfies a certain ``Blakers-Massey-type'' condition, and its class of weak equivalences verifies two reasonable extra conditions, then the existence theorem for model category structure on $\cat M_{\K}$ holds under conditions on $\K$ that are somewhat easier to check (Proposition \ref{prop:K6prime}).  

In the last two sections of the paper, we apply our existence theorem to a concrete class of examples that is relevant to both descent theory \cite{hess:descent} and the theory of Hopf-Galois extensions \cite{hess:hhg}.  Let $R$ be a semihereditary commutative ring, let $A$ be a dg $R$-algebra, and let $V$ be an $A$-coring, i.e., a comonoid in the category of $A$-bimodules.  We show that if $A$ and $R\otimes _{A}V$ are both homologically simply connected,  and $V$ is $A$-semifree on an $R$-free graded module of finite type, then the category $\mav$ of right $V$-comodules in the category of right $A$-modules admits a model category structure such that the forgetful functor $\mav \to \ma$ creates both cofibrations and weak equivalences (Theorem \ref{thm:mav}).  Under further conditions on $R$, $A$ and $V$, we prove that fibrant replacements in $\mav$ can be built using certain generalized cobar constructions (Theorem \ref{thm:fib-repl}).

It is worth noting that while the proof of the existence of model category structure on $\mav$ requires that the  \emph{left} $A$-module structure of $V$ satisfy certain properties, we need to impose conditions on the \emph{right} $A$-module structure of $V$ in order to construct nice fibrant replacements.

In an upcoming article \cite{hess-shipley2}, we will provide further classes of explicit applications of Theorem \ref{thm:bigthm}, including categories of comodules over comonoids in pointed simplicial sets and categories of comodule spectra over suspension spectra.  These cases are much harder to study, as the underlying categories are neither cartesian nor additive.
 
\subsection*{Related work}
In \cite {quillen} Quillen established the first model category structure on a particular category of coalgebras over a comonad,  the category of 1-connected, cocommutative dg coalgebras over $\mathbb Q$. Almost thirty years later, in \cite [Theorem 7.6]{blanc} Blanc provided conditions, complementary to those given here, under which a ``right'' model category structure could be transfered from an underlying model category to a category of coalgebras.
In an unpublished paper from the same period \cite{getzler-goerss}, Getzler and Goerss proved the existence of a model category structure on the category of dg coalgebras over a field. Hinich then generalized Quillen's work,  defining a simplicial model category structure on the category of unbounded cocommutative coalgebras over a field of characteristic zero, but where the class of weak equivalences was strictly smaller than that of quasi-isomorphisms  \cite {hinich}.

 In 2003 Aubry and Chataur proved the existence of  model category structures on (certain) cooperads and coalgebras over them in unbounded chain complexes over a field \cite{aubry-chataur}. Smith established results along the same lines in \cite{smith} in 2011.  In 2010, Stanculescu used the dual of the Quillen path-object argument to establish a model structure on comonoids given a functorial cylinder object for comonoids \cite{stanculescu}.  In 2009, the first author showed that in a Cartesian model category, such as topological spaces, simplicial sets, or small categories, the category of comodules inherits a model structure from the underlying category because the category of comodules is equivalent to a slice (or over) category~\cite[1.2.1]{hess:hhg}
 
In his 2003 thesis  \cite{lefevre}, Lef\`evre showed that for any twisting cochain $\tau:C\to A$ such that twisted tensor product $C\otimes_{\tau}A$ is acyclic, there is a  model category structure on the category of  unbounded, coconnected $C$-comodules such that the functor  $\cat{Comod}_{C}\to \cat {Mod}_{A}$ induced by $\tau$ creates weak equivalences and cofibrations.  Finally, Positselski recently published a book \cite{positselski} in which he defined a model category structure on the category of comodules over a curved dg coalgebra over a field, in which the class of weak equivalences is strictly stronger than that of quasi-isomorphisms.

 \subsection*{Notation and conventions}

\begin{itemize}
\item Let $\cat C$ be a small category, and let $A,B\in \ob \cat C$.  In these notes, the set of morphisms from $A$ to $B$ is denoted $\cat C(A,B)$.  The identity morphism on an object $A$ will often be denoted $A$ as well.
\item A terminal (respectively, initial) object in a category is denoted $e$ (respectively, $\emptyset$).
\item If $L:\cat C \adjunct{}{} \cat D: R$ are adjoint functors, then we denote the natural bijections
$$\cat C(C, RD) \xrightarrow \cong \cat D(LC, D): f \mapsto f^\flat$$
and 
$$ \cat D(LC, D)\xrightarrow \cong \cat C(C, RD): g \mapsto g^\sharp$$
for all objects $C$ in $\cat C$ and $D$ in $\cat D$.
\end{itemize}
 \section{Comonads and their coalgebras}
 
 \begin{defn} Let $\cat D$ be a category.  A \emph{comonad} on $\cat D$ consists of an endofunctor $K:\cat D\to \cat D$, together with natural transformations $\Delta:K\to K\circ K$ and $\ve:K\to Id_{\cat C}$ such that $\Delta$ is appropriately coassociative and counital, i.e., $\K=(K,\Delta, \ve)$ is a comonoid in the category of endofunctors of $\cat D$.
\end{defn}

\begin{ex}  If $L:\cat C \adjunct{}{} \cat D:R$ is a pair of adjoint functors, with unit $\eta:Id_{\cat C}\to RL$ and counit $\ve: LR\to Id_{\cat D}$, then  $(LR, L\eta R, \ve)$ is a comonad on $\cat D$.
\end{ex}

There is a category of ``coalgebras'' associated to any comonad.

\begin{defn}\label{defn:Kcoalg}  Let $\K=(K, \Delta, \ve)$ be a comonad on $\cat D$.  The objects of the \emph{Eilenberg-Moore category of $\K$-coalgebras}, denoted $\cat D_{\K}$,  are pairs $(D, \delta)$, where $D\in \ob \cat D$ and $\delta\in \cat D(D, KD)$, which is appropriately coassociative and counital, i.e.,
$$K\delta \circ \delta = \Delta_{D}\circ \delta\quad\text{and}\quad \ve_{D}\circ \delta =Id_{D}.$$ 
A morphism in $\cat D_{\K}$ from $(D,\delta)$ to $(D',\delta')$ is a morphism $f:D\to D'$ in $\cat D$ such that $Kf\circ \delta=\delta'\circ f$.
\end{defn}

The category $\cat D_{\K}$ of $\K$-coalgebras is related to the underlying category $\cat D$ as follows.

\begin{rmk} \label{rmk:K-adjunct} Let $\K=(K, \Delta, \ve)$ be a comonad on $\cat D$. The forgetful functor $$U_{\K}:\cat D_{\K}\to \cat D$$ admits a right adjoint
$$F_{\K}:\cat D\to \cat D_{\K},$$
called the \emph{cofree $\K$-coalgebra functor}, defined on objects by
$F_{\K}(X) = (KX, \Delta_{X})$
and on morphisms by
$F_{\K}(f)=Kf.$
Note that $\K$ itself is the comonad associated to the $(U_{\K},F_{\K})$-adjunction.\end{rmk}

Since our goal is to establish a model category structure on $\cat D_{\K}$ when $\cat D$ is itself a model category, we need to recall how limits of $\K$-coalgebras are constructed. Colimits pose no problem, as they are created by the forgetful functor. 

We begin with an important special case of limits.

\begin{lem}\label{lem:barr-wells}\cite{barr-wells}  Let $\K=(K, \Delta, \ve)$ be a comonad on $\cat D$. Any $\K$-coalgebra $(D,\delta)$ is the equalizer in $\cat D_{\K}$ of the diagram
$$F_{\K}D \egal{K\delta}{\Delta_{D}} F_{\K}(KD).$$
\end{lem}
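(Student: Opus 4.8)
The statement is precisely dual to the classical fact that every algebra over a monad is the (reflexive) coequalizer of a canonical pair of free algebras---the ``canonical presentation'' underlying Beck's monadicity theorem---so the plan is to dualize that argument. First I would assemble the ingredients of the fork. By Remark~\ref{rmk:K-adjunct} we have $F_{\K}D=(KD,\Delta_{D})$ and $F_{\K}(KD)=(KKD,\Delta_{KD})$; the map $K\delta=F_{\K}(\delta)$ is a morphism of $\K$-coalgebras because $F_{\K}$ is a functor, and $\Delta_{D}$ is the structure map of the cofree coalgebra $F_{\K}D$, hence a morphism $F_{\K}D\to F_{\K}(KD)$. Moreover $\delta\colon D\to KD$ is itself a morphism of $\K$-coalgebras $(D,\delta)\to F_{\K}D$, since the requirement $K\delta\circ\delta=\Delta_{D}\circ\delta$ is exactly the coassociativity axiom for $(D,\delta)$; this same identity shows that the fork commutes.

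It then remains to verify the universal property in $\cat D_{\K}$. Given a $\K$-coalgebra $(C,\gamma)$ together with a coalgebra morphism $g\colon (C,\gamma)\to F_{\K}D$ satisfying $K\delta\circ g=\Delta_{D}\circ g$, I would define the comparison map by $h:=\ve_{D}\circ g\colon C\to D$, using the counit of $\K$. This formula is in fact forced, which yields uniqueness at once: if $h'\colon(C,\gamma)\to(D,\delta)$ satisfies $\delta\circ h'=g$, then composing with $\ve_{D}$ and invoking the counit law $\ve_{D}\circ\delta=\id_{D}$ gives $h'=\ve_{D}\circ g=h$. The remaining checks are routine diagram chases. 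To see $\delta\circ h=g$, I would combine naturality of $\ve$ (which gives $\delta\circ\ve_{D}=\ve_{KD}\circ K\delta$) with the equalizing hypothesis $K\delta\circ g=\Delta_{D}\circ g$ and the counit identity $\ve_{KD}\circ\Delta_{D}=\id_{KD}$. To see that $h$ is a morphism of coalgebras, i.e.\ that $Kh\circ\gamma=\delta\circ h$, I would use that $g$ is a coalgebra morphism ($Kg\circ\gamma=\Delta_{D}\circ g$) together with the other counit identity $K\ve_{D}\circ\Delta_{D}=\id_{KD}$.

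Conceptually, the underlying fork $D\xrightarrow{\ \delta\ }KD\egal{K\delta}{\Delta_{D}}KKD$ in $\cat D$ is a split equalizer, split by $\ve_{D}$ and $\ve_{KD}$, the three split-fork identities being exactly $\ve_{D}\circ\delta=\id_{D}$, the naturality square $\delta\circ\ve_{D}=\ve_{KD}\circ K\delta$, and $\ve_{KD}\circ\Delta_{D}=\id_{KD}$; since split equalizers are absolute, this is the structural reason the factorization exists and behaves well. There is essentially no difficulty here beyond bookkeeping, so I do not expect a genuine obstacle. The one point demanding attention is that we are claiming an equalizer in $\cat D_{\K}$ rather than merely in the underlying category $\cat D$: one must confirm that the comparison map $h$ actually lives in $\cat D_{\K}$ (and not just in $\cat D$), and one must be careful to deploy the two distinct counit laws in the two distinct places where they are needed.
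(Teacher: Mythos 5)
Your proof is correct. Note that the paper gives no argument for this lemma at all---it is stated with a bare citation to Barr--Wells---so there is nothing in the text to diverge from; what you have written out is precisely the standard dual of Beck's canonical-presentation argument that the citation points to. All the checks are sound: the fork commutes by coassociativity of $(D,\delta)$; the comparison map $h=\ve_{D}\circ g$ is forced (giving uniqueness via $\ve_{D}\circ\delta=\id_{D}$); the identity $\delta\circ h=g$ uses naturality of $\ve$ together with the comonad counit law $\ve_{KD}\circ\Delta_{D}=\id_{KD}$; and the verification that $h$ is a morphism of $\K$-coalgebras uses the other counit law $K\ve_{D}\circ\Delta_{D}=\id_{KD}$---exactly the point you flag as needing care, since the equalizer is asserted in $\cat D_{\K}$ and not merely in $\cat D$. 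Your closing observation that the underlying fork is a split (hence absolute) equalizer in $\cat D$ is also the right conceptual summary.
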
 

Under the following condition on the functor underlying the comonad $\K$, the category of $\K$-coalgebras actually admits all equalizers.

\begin{defn} Let $\cat J$ denote the category with $\ob \cat J=\mathbb N$ and 
$$\cat J(n,m)=\begin{cases} \{j_{n,m}\}:&n\geq m\\ \emptyset:& n<m,\end{cases}$$
where $j_{m,m}=\id _{m}$ for all $m$.

A functor $F:\cat C\to \cat D$ \emph{preserves limits of countable
chains} if there is a natural isomorphism
$$\tau: F\circ \lim_{n}\Rightarrow \lim _{n}\circ F^{\cat J}$$
of functors from the diagram category $\cat C^{\cat J}$ to $\cat D$.
\end{defn}

\begin{prop}\label{prop:barr-wells} \cite{barr-wells} Let $\K=(K, \Delta, \ve)$ be a comonad on  a complete category $\cat D$.  If $K$ commutes with countable inverse limits, then $\cat D_{\K}$ admits all equalizers and is therefore complete.
\end{prop}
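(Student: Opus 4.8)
The plan is to establish completeness by showing that $\cat D_{\K}$ has all equalizers and all small products, since a category possessing both is complete. The construction of products will follow formally from the existence of equalizers together with the completeness of $\cat D$, so the real content is the construction of equalizers. That construction dualizes the classical way of building coequalizers in a category of algebras over a monad that preserves colimits of countable chains: one produces the equalizer as the limit of a countable tower in $\cat D$, and the hypothesis on $K$ is precisely what allows a coalgebra structure to be transported onto this limit.

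First I would construct the equalizer of a parallel pair $f,g\colon (D,\delta)\rightrightarrows(D',\delta')$ in $\cat D_{\K}$. Guided by the idea that it should be the largest sub-coalgebra of $(D,\delta)$ on which $f$ and $g$ agree, I would build a tower $\cdots\to E_2\xrightarrow{p_1}E_1\xrightarrow{p_0}E_0=D$ in $\cat D$ by induction. I would take $E_1$ to be the equalizer of $f,g$ in $\cat D$; since $f$ and $g$ are coalgebra morphisms and $fp_0=gp_0$, the composite $\delta\circ p_0\colon E_1\to KE_0$ equalizes $Kf$ and $Kg$, and I would take it as an initial partial structure map $\delta_1$. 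Given $p_{n-1}\colon E_n\to E_{n-1}$ and a partial structure map $\delta_n\colon E_n\to KE_{n-1}$, I would define $E_{n+1}$ as the pullback of $\delta_n$ along $Kp_{n-1}\colon KE_n\to KE_{n-1}$; this pullback simultaneously produces the next projection $p_n\colon E_{n+1}\to E_n$ and a refined structure map $\delta_{n+1}\colon E_{n+1}\to KE_n$, the point being that $K$ need not preserve the equalizer $E_1$, so the structure map can only be made to descend one level at a time. Setting $E_\infty:=\lim_n E_n$, the hypothesis that $K$ commutes with countable inverse limits yields $KE_\infty\cong\lim_n KE_n$, and since $Kp_{n-1}\circ\delta_{n+1}=\delta_n\circ p_n$ by construction, the maps $\delta_{n+1}$ assemble into a single morphism $\delta_\infty\colon E_\infty\to KE_\infty$.

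It then remains to check that $(E_\infty,\delta_\infty)$ is a $\K$-coalgebra, that the canonical map $e_\infty\colon E_\infty\to E_0=D$ is a coalgebra morphism equalizing $f$ and $g$, and that it is universal with this property; the counit and coassociativity axioms, together with the compatibility $Ke_\infty\circ\delta_\infty=\delta\circ e_\infty$, are traced through the tower. For universality, any coalgebra morphism $h\colon(C,\gamma)\to(D,\delta)$ with $fh=gh$ factors through $E_1$ in $\cat D$, and then, using that $h$ respects the comultiplications, the resulting map lifts successively through each pullback $E_{n+1}$ and hence through $E_\infty$; one checks the induced map is a morphism of coalgebras and is unique.

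With equalizers available, completeness follows formally. The cofree functor $F_{\K}$ of Remark \ref{rmk:K-adjunct} is a right adjoint and so preserves limits; since $\cat D$ is complete, products of cofree coalgebras exist in $\cat D_{\K}$, with $\prod_i F_{\K}X_i\cong F_{\K}\big(\prod_i X_i\big)$. By Lemma \ref{lem:barr-wells} each coalgebra $(D_i,\delta_i)$ is the equalizer of $F_{\K}D_i\rightrightarrows F_{\K}(KD_i)$, so $\prod_i(D_i,\delta_i)$ is realized as the equalizer of the induced pair $\prod_i F_{\K}D_i\rightrightarrows\prod_i F_{\K}(KD_i)$ between products of cofree coalgebras, which exists by the construction above. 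Thus $\cat D_{\K}$ has all equalizers and all small products and is therefore complete. The main obstacle is the equalizer construction itself, and in particular verifying that $\delta_\infty$ satisfies the coalgebra axioms and the universal property. The hypothesis that $K$ preserves countable inverse limits enters precisely, and essentially only, in the identification $KE_\infty\cong\lim_n KE_n$ that produces the structure map on the limit; this is exactly the ingredient that a naive dualization of the ``cosmall'' approach would lack.
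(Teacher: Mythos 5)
There is a genuine gap, and it sits exactly at the step you defer: ``the counit and coassociativity axioms \ldots are traced through the tower.'' Your tower records strictly less information than it needs to. Each stage $E_{n+1}$ is a bare pullback of $\delta_n$ along $Kp_{n-1}$, so the only identity available afterwards is $Kp_{n-1}\circ\delta_{n+1}=\delta_n\circ p_n$. Now try to verify coassociativity of $\delta_\infty$: since $K^2E_\infty\cong\lim_nK^2E_n$ and the projections are jointly monic, it amounts to the family of identities
$$(\star_n)\qquad K\delta_{n+1}\circ\delta_{n+2}\;=\;\Delta_{E_n}\circ\delta_{n+1}\circ p_{n+1}\colon E_{n+2}\to K^2E_n$$
(restricted along $e_{n+2}\colon E_\infty\to E_{n+2}$). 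Identity $(\star_0)$ does hold, because $\delta_1=\delta\circ p_0$ and $(D,\delta)$ is coassociative. But for $n\geq 1$ all your data yield is that $(\star_n)$ holds \emph{after} postcomposition with $K^2(p_0\cdots p_{n-1})\colon K^2E_n\to K^2E_0$: one computes $K^2p_{n-1}\circ K\delta_{n+1}\circ\delta_{n+2}=K\delta_n\circ\delta_{n+1}\circ p_{n+1}$ and, by naturality of $\Delta$, $K^2p_{n-1}\circ\Delta_{E_n}\circ\delta_{n+1}\circ p_{n+1}=\Delta_{E_{n-1}}\circ\delta_n\circ p_n\circ p_{n+1}$, so the identity can be pushed \emph{down} the tower but never back \emph{up}. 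To cancel $K^2(p_0\cdots p_{n-1})$ you would need $K^2$ to preserve the relevant monomorphisms; but $K$ is only assumed to preserve countable inverse limits, not monomorphisms, and your projections $p_n$ for $n\geq 1$, being pullbacks of the maps $Kp_{n-1}$, need not even be monomorphisms. The counit axiom has the same defect: the natural argument shows $\ve_{E_n}\circ\delta_{n+1}$ agrees with $p_n$ only after composing down to the bottom of the tower, unless one already has $(\star_{n-1})$ as input. So $(E_\infty,\delta_\infty)$ cannot be shown to be a $\K$-coalgebra, and the rest of your argument--whose universal-property verification is fine, and whose reduction of completeness to equalizers plus products of cofree coalgebras agrees with the paper--has nothing to stand on.

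This is precisely what the paper's (Barr--Wells) construction is built to avoid: there, the stage $B_{i+1}$ is not a pullback of the structure map alone but the limit of a diagram that also contains the counit map $\ve_{B_i}\colon KB_i\to B_i$ and the parallel pair $K\beta_i,\;\Delta_{B_{i-1}}Kb_i\colon KB_i\rightrightarrows K^2B_{i-1}$. The limit cone therefore forces $\ve_{B_i}\circ\beta_{i+1}=b_{i+1}$ and $K\beta_i\circ\beta_{i+1}=\Delta_{B_{i-1}}\circ Kb_i\circ\beta_{i+1}$ at every stage; these are exactly the counital and coassociative constraints, which then pass to the limit $(B,\beta)$ with no need for $K$ to preserve anything beyond countable inverse limits. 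To repair your proof, enlarge each pullback to such a limit; alternatively, adding the hypothesis that $K$ preserves monomorphisms would rescue your tower, but that is the hypothesis of the different result Proposition \ref{prop:adamek}, not of this one.
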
 

\begin{proof} Barr and Wells prove the dual result for coequalizers of algebras over a monad in \cite{barr-wells}.  To give the reader some intuition for the nature of limits in $\cat D_{\K}$, we provide a sketch of the dual to the proof in \cite{barr-wells}.  

Let $(C,\gamma)\egal {f}{g} (D,\delta)$ be a diagram in $\cat D_{\K}$. Consider the following diagram in $\cat D$.
$$\xymatrix{C\ar [rr]^\gamma &&KC \ar @<0.3ex>[rr]^{K\gamma}\ar @<-0.3ex>[rr]_{\Delta_{C}}&&K^2C\\
B_{0}\ar[u]^{b_{0}}&&KB_{0}\ar [u]^{Kb_{0}}\ar [ll]_{\ve_{B_{0}}}\ar @<0.3ex>[urr]^(0.35){K(\gamma b_{0})}\ar @<-0.3ex>[urr]_(0.35){\Delta _{C}Kb_{0}}&&K^2B_{0}\ar [u]^{K^2b_{0}}\\
B_{1}\ar[urr]^{\beta _{1}}\ar[u]^{b_{1}}&&KB_{1}\ar [u]^{Kb_{1}}\ar [ll]_{\ve_{B_{1}}}\ar @<0.3ex>[urr]^(0.35){K\beta_{1}}\ar @<-0.3ex>[urr]_(0.35){\Delta _{B_{0}}Kb_{1}}&&K^2B_{1}\ar [u]^{K^2b_{1}}\\
B_{2}\ar[urr]^{\beta _{2}}\ar[u]^{b_{2}}&&KB_{2}\ar [u]^{Kb_{2}}\ar [ll]_{\ve_{B_{2}}}\ar @<0.3ex>[urr]^(0.35){K\beta_{2}}\ar @<-0.3ex>[urr]_(0.35){\Delta _{B_{1}}Kb_{2}}&&K^2B_{2}\ar [u]^{K^2b_{2}}\\
{\vdots}\ar [u]&&{\vdots}\ar [u]&&\vdots\ar [u]
}$$
Here, $b_{0}:B_{0}\to C$ is the equalizer of $C\egal fg D$ in $\cat D$, while if $i>0$, then $B_{i}$ is the limit of the part of the diagram above it and into which it maps.  The morphisms $b_{i}$ and $\beta_{i}$ are the natural cone maps from the limit.

If $B=\lim_{i\geq 0}B_{i}$, and
$$\beta = \lim_{i\geq 1}\beta _{i}: B\to \lim _{i\geq 1}KB_{i-1}\cong KB,$$  
then $(B,\beta)$ is a $\K$-coalgebra, which equalizes $(C,\gamma)\egal {f}{g} (D,\delta)$.  For the details of the (dual) argument, we refer the reader to \cite{barr-wells}.
\end{proof}

\begin{rmk}\label{rmk:invlimcoalg}   Let  $\K=(K, \Delta, \ve)$ be a comonad on $\cat D$ such that $K$ commutes with countable inverse limits, via a natural isomorphism $\tau: K\circ \lim _{n}\Rightarrow \lim_{n}\circ K^{\cat J}$.  As is certainly well known to those familiar with comonads,
 the forgetful functor $U_{\K}$ then also commutes with countable inverse limits.  Indeed, if 
$$\cdots\xrightarrow {p_{n+2}} (C_{n+1},\gamma_{n+1})\xrightarrow {p_{n+1}} (C_{n},\gamma_{n})\xrightarrow {p_{n}}\cdots \xrightarrow {p_{1}}(C_{0},\gamma_{0})$$
is a tower of $\K$-coalgebra morphisms, then the morphism
$$(\gamma_{n})_{n\geq 0}: (C_{n})_{n\geq 0}\to (KC_{n})_{n\geq 0}$$
of towers in $\cat D$ induces a morphism in $\cat D$
$$\lim _{n}C_{n}\xrightarrow{\lim _{n}\gamma_{n}} \lim _{n}KC_{n}\underset \cong{\xrightarrow{\tau^{-1}}} K(\lim C_{n}),$$
which is a $\K$-coalgebra structure on $\lim_{n}C_{n}$. Both coassociativity and counitality follow from the universal property of the limit and the naturality of $\tau$, which together imply that 
$$\lim _{n}\Delta_{C_{n}}\circ \tau=\tau\circ\tau\circ \Delta_{\lim _{n}C_{n}}: K(\lim_{n}C_{n})\to \lim_{n} K^2C_{n}$$ and 
$$\lim _{n}\ve_{C_{n}}\circ \tau= \ve_{\lim _{n}C_{n}}: K(\lim_{n}C_{n})\to \lim_{n}C_{n}.$$
It follows that $\lim_{n}(C_{n},\gamma_{n})=(\lim_{n}C_{n},\tau^{-1}\circ \lim_{n}\gamma_{n})$.
\end{rmk}

Once we know how to construct equalizers of $\K$-coalgebra morphisms, we can easily describe products and pullbacks, using the formulas of  the next lemma.

\begin{lem} Let $\K=(K, \Delta, \ve)$ be a comonad on $\cat D$. 
\begin{enumerate}
\item Products of cofree $\K$-coalgebras exist.  In particular, $$F_{\K}X\times F_{\K}Y\cong F_{\K}(X\times Y)$$ for all $X,Y\in \ob \cat D$.
\item For any $\K$-coalgebra $(D,\delta)$, the product $(D,\delta)\times F_{\K}X$ is the equalizer of the diagram
$$F_{\K} D \times F_{\K}X \egal {K\delta \times \id}{\Delta _{D}\times \id} F_{\K}(KD)\times F_{\K}X,$$
if it exists.
\item For any morphism $f:X\to Y$ in $\cat D$ and any morphism $g: (D,\delta)\to F_{\K}Y$ of $\K$-coalgebras, the pullback of $F_{\K}f$ and $g$ is the equalizer of the diagram
$$(D,\delta )\times F_{\K}X\egal {F_{\K}f\circ p_{2}}{g\circ p_{1}}F_{\K}Y,$$
if it exists. Here $p_{1}:(D,\delta )\times F_{\K}X\to (D,\delta)$ and $p_{2}:(D,\delta )\times F_{\K}X\to F_{\K}X$ are the natural projection maps.
\end{enumerate}
\end{lem}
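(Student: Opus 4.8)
The plan is to deduce all three parts from two pieces of general category theory — that right adjoints preserve limits and that limits commute with limits — together with the Barr--Wells presentation of a coalgebra as an equalizer of cofree coalgebras (Lemma~\ref{lem:barr-wells}). No genuinely new input is needed; the content is entirely in organizing the relevant diagrams inside $\cat D_{\K}$, so that no completeness hypothesis on $\cat D$ or $\cat D_{\K}$ is required and all existence assumptions are absorbed into the ``if it exists'' clauses.

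For part~(1) I would invoke Remark~\ref{rmk:K-adjunct}: since $F_{\K}$ is right adjoint to the forgetful functor $U_{\K}$, it preserves all limits that exist in $\cat D$, and in particular binary products. Concretely, whenever $X\times Y$ exists in $\cat D$, applying $F_{\K}$ to its two projections exhibits $F_{\K}(X\times Y)$ as a product of $F_{\K}X$ and $F_{\K}Y$ in $\cat D_{\K}$, which is exactly the asserted isomorphism $F_{\K}X\times F_{\K}Y\cong F_{\K}(X\times Y)$.

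For part~(2) I would begin from Lemma~\ref{lem:barr-wells}, which presents $(D,\delta)$ as the equalizer in $\cat D_{\K}$ of $F_{\K}D\egal{K\delta}{\Delta_{D}}F_{\K}(KD)$; note that the products $F_{\K}D\times F_{\K}X$ and $F_{\K}(KD)\times F_{\K}X$ appearing in the target diagram exist by part~(1), so the parallel pair is well defined. The key point is that in any category, taking the product with a fixed object $Z$ carries the equalizer of a parallel pair $u,v\colon A\to B$ to the equalizer of $u\times\id_{Z},\,v\times\id_{Z}\colon A\times Z\to B\times Z$ — an instance of the commutation of limits with limits. Rather than appeal to this abstractly, I would simply check the universal property: a map $T\to F_{\K}D\times F_{\K}X$ equalizing $K\delta\times\id$ and $\Delta_{D}\times\id$ is precisely a pair $(a\colon T\to F_{\K}D,\ b\colon T\to F_{\K}X)$ with $K\delta\circ a=\Delta_{D}\circ a$, and by Lemma~\ref{lem:barr-wells} such an $a$ factors uniquely through $(D,\delta)$; hence these maps correspond bijectively to maps $T\to (D,\delta)\times F_{\K}X$. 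Taking $Z=F_{\K}X$ then identifies $(D,\delta)\times F_{\K}X$ with the displayed equalizer whenever that product exists.

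Part~(3) is the standard construction of a pullback from products and equalizers, valid in any category: the pullback of the cospan $(D,\delta)\xrightarrow{\,g\,}F_{\K}Y\xleftarrow{\,F_{\K}f\,}F_{\K}X$ is the equalizer of the two composites $g\circ p_{1}$ and $F_{\K}f\circ p_{2}$ from $(D,\delta)\times F_{\K}X$ to $F_{\K}Y$, since a cone over the cospan is exactly a map into the product on which these two composites agree. This is precisely the displayed diagram, so the claim follows as soon as the product — and hence, by part~(2), the relevant equalizer — exists. I do not expect any genuine obstacle: each part reduces to general category theory applied within $\cat D_{\K}$, and the only care needed is to keep straight which pair of maps is being equalized and to note that all existence hypotheses are folded into the ``if it exists'' provisos, so no completeness assumption on $\cat D$ or $\cat D_{\K}$ is invoked.
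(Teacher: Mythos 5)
Your proposal is correct and follows essentially the same route as the paper: part (1) from $F_{\K}$ being a right adjoint, part (2) from Lemma \ref{lem:barr-wells} together with the commutation of limits with limits, and part (3) as the standard equalizer description of a pullback. The paper's proof is just a terser version of yours; your explicit verification of the universal property in part (2) fills in exactly what the paper leaves implicit.
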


\begin {proof} (1) This isomorphism follows from the fact that $F_{\K}$ is a right adjoint.

(2) Since limits commute with limits, this formula for $(D,\delta)\times F_{\K}X$ is a consequence of Lemma \ref{lem:barr-wells}.

(3) This is the standard description of a pullback as an equalizer.
\end{proof}

Under a reasonable condition on the category on which a comonad $\K$ acts, the category of $\K$-coalgebras is complete under an even milder condition on $K$ than preservation of inverse limits.   Recall that a category is \emph{well-powered} if the subobjects of any object form a set, rather than a proper class.  Any locally presentable category is well-powered \cite{adamek-rosicky}.  Recall that a morphism $g:B\to C$ in any category $\cat C$ is a \emph{monomorphism} if for all pairs of morphisms $f,f':A\to B$ with target $B$,
$$gf=gf'\Longrightarrow f=f'.$$ 

\begin{prop} \cite {adamek}\label{prop:adamek}  Let $\K=(K, \Delta, \ve)$ be a comonad on a well-powered category $\cat D$.  If $K$ preserves monomorphisms, then $\cat D_{\K}$ is complete.
\end{prop}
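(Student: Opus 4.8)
The plan is to reduce completeness of $\cat D_{\K}$ to the existence of all small products and all equalizers, since a category possessing both is complete. Throughout I assume, as in Proposition \ref{prop:barr-wells}, that $\cat D$ is itself complete; this is forced (take $K=\id$, for which $\cat D_{\K}\cong\cat D$) and is implicit in the cited result of Ad\'amek. The construction of products is then essentially formal. Given a family $\{(C_{j},\gamma_{j})\}_{j\in J}$ of $\K$-coalgebras, Lemma \ref{lem:barr-wells} exhibits each $(C_{j},\gamma_{j})$ as the equalizer of $F_{\K}C_{j}\egal{K\gamma_{j}}{\Delta_{C_{j}}}F_{\K}(KC_{j})$. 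Because limits commute with limits, and because products of cofree coalgebras exist with $\prod_{j}F_{\K}X_{j}\cong F_{\K}(\prod_{j}X_{j})$ (the preceding lemma), the product $\prod_{j}(C_{j},\gamma_{j})$, should it exist, must be the equalizer in $\cat D_{\K}$ of
$$F_{\K}\Big(\prod_{j}C_{j}\Big)\rightrightarrows F_{\K}\Big(\prod_{j}KC_{j}\Big).$$
Thus once equalizers are known to exist, so do all products, and hence all limits.

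The real work is therefore to construct the equalizer of a parallel pair $f,g\colon(C,\gamma)\to(D,\delta)$ in $\cat D_{\K}$. My approach is to realize it as the largest sub-$\K$-coalgebra of $(C,\gamma)$ on which $f$ and $g$ agree, obtained by a transfinite descent through subobjects of $C$. Let $m_{0}\colon E_{0}\hookrightarrow C$ be the equalizer of $f,g$ in $\cat D$. Given a subobject $m_{\alpha}\colon E_{\alpha}\hookrightarrow C$, the hypothesis that $K$ preserves monomorphisms makes $Km_{\alpha}\colon KE_{\alpha}\hookrightarrow KC$ a monomorphism, so I may define $m_{\alpha+1}\colon E_{\alpha+1}\hookrightarrow E_{\alpha}$ by the pullback of $\gamma m_{\alpha}\colon E_{\alpha}\to KC$ along $Km_{\alpha}$; informally $E_{\alpha+1}$ is the largest subobject of $E_{\alpha}$ on which $\gamma$ restricts to a map into $KE_{\alpha}$. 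At a limit ordinal $\lambda$ I set $E_{\lambda}$ to be the intersection (a limit in $\cat D$) of the $E_{\alpha}$ with $\alpha<\lambda$. This produces a descending transfinite chain of subobjects of $C$.

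Because $\cat D$ is well-powered, the subobjects of $C$ form a set, so this chain cannot decrease strictly forever and must stabilize at some $E_{\infty}$ with $E_{\infty}=E_{\infty+1}$. Stabilization says precisely that $\gamma m_{\infty}$ factors as $Km_{\infty}\circ\sigma$ for a map $\sigma\colon E_{\infty}\to KE_{\infty}$, unique since $Km_{\infty}$ is monic. I then verify that $(E_{\infty},\sigma)$ is a $\K$-coalgebra and that $m_{\infty}$ is a coalgebra morphism: counitality follows by canceling the monomorphism $m_{\infty}$ in the identity $m_{\infty}\,\ve_{E_{\infty}}\,\sigma=\ve_{C}\,Km_{\infty}\,\sigma=\ve_{C}\,\gamma m_{\infty}=m_{\infty}$, obtained from naturality of $\ve$ together with counitality of $\gamma$, while coassociativity follows in the same way after canceling the monomorphism $K^{2}m_{\infty}$, which is monic because $K$, hence $K^{2}$, preserves monomorphisms. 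Since $m_{\infty}$ factors through $E_{0}$, we have $fm_{\infty}=gm_{\infty}$.

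Finally I check the universal property. Given a coalgebra morphism $h\colon(T,\tau)\to(C,\gamma)$ with $fh=gh$, a transfinite induction shows that $h$ factors through every $E_{\alpha}$: the factorization through $E_{0}$ comes from the $\cat D$-equalizer, and if $h=m_{\alpha}h_{\alpha}$ then, since $h$ is a coalgebra map, $\gamma m_{\alpha}h_{\alpha}=Kh\circ\tau=Km_{\alpha}\circ(Kh_{\alpha}\circ\tau)$ factors through $Km_{\alpha}$, whence the pullback defining $E_{\alpha+1}$ yields a factorization through $E_{\alpha+1}$; limit stages are handled by the universal property of the intersection. Hence $h$ factors uniquely through $m_{\infty}$, and the factorization is automatically a coalgebra morphism because $m_{\infty}$ is monic. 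I expect the main obstacle to be exactly this interplay of stabilization with the verification that the stabilized subobject both carries the coalgebra structure and enjoys the universal property: the two hypotheses enter here in an essential way, well-poweredness to force the descending chain to stabilize, and preservation of monomorphisms to keep every $Km_{\alpha}$ monic so that the pullbacks, the cancellations, and the factorizations above are all legitimate.
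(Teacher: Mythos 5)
Your proof is correct, but it cannot be compared line-by-line with the paper's, because the paper gives none: Proposition \ref{prop:adamek} is quoted from Ad\'amek, with only the one-sentence indication that his proof is a ``solution set''-type construction of equalizers of $\K$-coalgebras. What you have written is a complete, self-contained argument of exactly the kind that citation gestures at. The reduction of completeness to products plus equalizers, with products of arbitrary coalgebras obtained as equalizers of products of cofree ones via Lemma \ref{lem:barr-wells} and the isomorphism $\prod_j F_{\K}X_j \cong F_{\K}\bigl(\prod_j X_j\bigr)$ (which holds since $F_{\K}$ is a right adjoint), is the same bookkeeping the paper sets up around Proposition \ref{prop:barr-wells}; the genuine content is your equalizer construction. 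There you replace the countable tower in the proof sketch of Proposition \ref{prop:barr-wells} --- whose convergence is paid for by the hypothesis that $K$ preserves countable inverse limits --- by a transfinite descent through subobjects of $C$, whose stabilization is paid for by well-poweredness. This is precisely the dual of the classical construction of coequalizers of algebras over a monad preserving epimorphisms on a co-well-powered category, and your verifications are sound: preservation of monomorphisms keeps each $Km_\alpha$ and $K^2m_\infty$ monic, which legitimizes the successor-stage pullbacks, the cancellation arguments giving counitality and coassociativity of $\sigma$, and the automatic coalgebra-property of the induced factorization in the universal property. Finally, you are right to make explicit the hypothesis that $\cat D$ be complete: the proposition as stated omits it, yet some completeness is genuinely needed (take $K=\id$; a well-powered category need not be complete), and it is harmless here since in every application in the paper $\cat D$ is a model category, hence bicomplete. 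So your addition is a correction of the statement rather than a restriction of it.
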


Ad\'amek proves this proposition by providing an explicit ``solution set''-type  construction of an equalizer of $\K
$-coalgebras.

\section{Injective combinatorial model structures}

In this section we provide conditions under which  a model category admits a Quillen-equivalent \emph{injective} model category structure, i.e., a model category structure in which the cofibrations are exactly the monomorphisms.  The injectivity condition is important in this paper as it simplifies considerably the existence proof for model category structures on categories of coalgebras.

We then apply our existence theorem to establishing that categories of differential graded modules over differential graded algebras that are degreewise flat over the ground ring admit injective model category structures.  

\subsection{The existence theorem}
 We apply Smith's argument for constructing combinatorial model
categories to establish the existence of an injective model category structure. We follow Lurie's version of the argument  \cite[A.2.6.8]{lurie}, but see  also  \cite[1.7]{beke}, or \cite[4.3]{rosicky}.   

Let $\cat M$ be a category endowed with a ``standard'' combinatorial model (SCM) structure
(see Definition \ref{defn:scm} below).
In Theorem \ref{thm:icm} we establish the existence of an  injective combinatorial model  (ICM) structure on $\cat M$ with the same weak equivalences and cofibrations exactly the monomorphisms. 

There is an ICM structure on a category $\cat M$ only if the class of all monomorphisms in $\cat M$ is generated by a set.  To state conditions under which there is a such a generating set, we need the following standard notions.

\begin{defn}\label{defn:effunion} Let $\cat C$ be a category.  For every pair of monomorphisms 
$$A \xrightarrow a X \xleftarrow b B$$ 
with a common codomain, let 
$$A\cup B:= A \coprod_{A\underset X \times B}B,$$
the pushout of $A \leftarrow A\underset X\times B \rightarrow B$, where $A\underset X\times B$ is the pullback of $a$ and $b$.

The category $\cat C$ has \emph{effective unions} if each of the natural morphisms
$$\xymatrix{A \ar [r]& A\cup B \ar [d] & B\ar [l]\\ &X}$$
is a monomorphism, for every pair of monomorphisms $A \xrightarrow a X \xleftarrow b B$.
\end{defn}

\begin{defn}  If $\mathsf X$ is a set of morphisms in a category $\cat C$, then $\mathsf X$-inj is the class of morphisms in $\cat C$ satisfying the right lifting property with respect to $\mathsf X$, while $\mathsf X$-cof is the class of morphisms satisfying the left lifting property with respect to $\mathsf X$-inj. In other words,  a morphism $p:E\to B$  is in $\mathsf {X}$-inj if for any commuting diagram in $\cat C$
$$\xymatrix{ A\ar[r]^f\ar [d]^{i} & E\ar[d]^p\\ X\ar[r]^g& B,}$$
where $i\in \mathsf{X}$, there is a morphism $h:X\to E$ such that $ph =g$ and $ hi=f$, while a morphism $j:Y\to Z$ is in $\mathsf {X}$-cof if for any commuting diagram in $\cat C$
$$\xymatrix{ Y\ar[r]^f\ar [d]^{j} & E\ar[d]^p\\ Z\ar[r]^g& B,}$$
where $p\in \mathsf{X}$-inj, there is a morphism $h:Z\to E$ such that $ph =g$ and $ hj=f$
\end{defn}   

\begin{lem}\label{lem:beke}\cite [1.12]{beke}
Let $\cat C$ be a category. If 
\begin{enumerate}
\item $\cat C$ is locally presentable,
\item subobjects in $\cat C$ have effective unions, and
\item the class of  monomorphisms is closed under transfinite composition,
\end{enumerate}
then there is a set of monomorphisms $\mathsf I$ in $\cat C$ such that the class of  all monomorphisms is equal to $\mathsf I$-cof.  
\end{lem}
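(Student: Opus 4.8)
The plan is to exhibit the generating set $\mathsf I$ explicitly using the local presentability of $\cat C$, and then to prove the two inclusions ``$\mathsf I$-cof $\subseteq$ monomorphisms'' and ``monomorphisms $\subseteq \mathsf I$-cof'' separately. First I would invoke local presentability to fix a regular cardinal $\kappa$ for which $\cat C$ is locally $\kappa$-presentable and, moreover, the $\kappa$-presentable objects are closed under subobjects; such a $\kappa$ exists by a standard cardinal-closure argument, using that $\cat C$ is well-powered and that, up to isomorphism, there is only a set of $\kappa_0$-presentable objects for each $\kappa_0$. I then let $\mathsf I$ be a set of representatives for the isomorphism classes of monomorphisms $P\rightarrowtail Q$ between $\kappa$-presentable objects. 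This is genuinely a set, since there is a set of isomorphism classes of $\kappa$-presentable objects and each hom-set is small. Because the domains of the maps in $\mathsf I$ are $\kappa$-presentable, hence $\kappa$-small, Smith's small-object argument applies, so $\mathsf I$-cof is precisely the class of retracts of transfinite composites of pushouts of maps in $\mathsf I$.

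For the inclusion $\mathsf I$-cof $\subseteq$ monomorphisms, I would check that the class of monomorphisms contains $\mathsf I$ and is closed under the three operations that generate $\mathsf I$-cof from $\mathsf I$: retracts (automatic), transfinite composition (hypothesis (3)), and the cobase changes of $\mathsf I$-maps occurring in cell attachments, where closure is extracted from the effective-unions hypothesis once the pushout is recognized as a union of subobjects inside the target. Since $\mathsf I$-cof is the smallest class with these closure properties containing $\mathsf I$, this gives the inclusion.

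The heart of the argument is the reverse inclusion: every monomorphism $f\colon X\rightarrowtail Y$ lies in $\mathsf I$-cof. I would prove this by a transfinite ``subobject-filling'' recursion. Well-order the set of $\kappa$-presentable subobjects of $Y$ (a set, by well-poweredness) as $\{W_\alpha\}_{\alpha<\lambda}$, and build an increasing chain of subobjects $X=Z_0\le Z_\alpha\le Y$ by setting $Z_{\alpha+1}=Z_\alpha\cup W_\alpha$, the effective union formed inside $Y$, and taking $Z_\beta=\colim_{\alpha<\beta}Z_\alpha$ at limit stages. Effective unions guarantee that each $Z_\alpha$ remains a subobject of $Y$, and hypothesis (3) guarantees that the limit stages do too. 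By definition of the effective union, $Z_\alpha\to Z_{\alpha+1}$ is the cobase change of the monomorphism $Z_\alpha\cap W_\alpha\rightarrowtail W_\alpha$, whose domain and codomain are both $\kappa$-presentable (the domain because $\kappa$-presentable objects are closed under subobjects); hence this monomorphism lies in $\mathsf I$, and $Z_\alpha\to Z_{\alpha+1}$ lies in $\mathsf I$-cof. Consequently $X\to Z_\alpha$ is a transfinite composite of $\mathsf I$-cof maps, so $X\to Z_\alpha\in\mathsf I$-cof for every $\alpha$. At the final stage every $\kappa$-presentable subobject of $Y$ factors through $Z_\lambda\le Y$, so $Z_\lambda=Y$ because $Y$ is the $\kappa$-filtered colimit of its $\kappa$-presentable subobjects, and therefore $f\in\mathsf I$-cof.

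The main obstacle is exactly this transfinite recursion, and it is of two kinds. The set-theoretic point is the choice of $\kappa$: it must be large enough that the attaching monomorphisms $Z_\alpha\cap W_\alpha\rightarrowtail W_\alpha$ genuinely lie in the \emph{set} $\mathsf I$, which is where closure of the $\kappa$-presentable objects under subobjects is indispensable. The structural point is the repeated appeal to effective unions to ensure that each enlargement $Z_\alpha\to Z_{\alpha+1}$ is simultaneously a monomorphism, a cobase change of an $\mathsf I$-map, and strictly internal to $Y$; without the effective-unions hypothesis the filtration could fail to remain a chain of honest subobjects of $Y$, and the recursion would break down.
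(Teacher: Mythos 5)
Your overall strategy is the right one: the paper never proves this lemma (it is quoted verbatim from Beke's [1.12]), and Beke's argument is exactly the two-inclusion scheme you describe, with the hard direction done by transfinite exhaustion of subobjects via effective unions. But two of the steps you wave through are genuine gaps, and they are precisely the steps you yourself identify as the heart of the matter. The first is the limit stage of your recursion. Hypothesis (3) says that monomorphisms are closed under transfinite composition: for a continuous chain $Z_0\to Z_1\to\cdots$ of monomorphisms, the \emph{composite} $Z_0\to\colim_{\alpha<\beta}Z_\alpha$ is a monomorphism. What your recursion needs at a limit ordinal $\beta$ is a different statement: that the canonical comparison map $\colim_{\alpha<\beta}Z_\alpha\to Y$ is a monomorphism, so that $Z_\beta$ is again an honest subobject of $Y$, the intersection $Z_\beta\cap W_\beta$ makes sense as a subobject of $W_\beta$, and the effective-union step can be iterated. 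Hypothesis (3) does not deliver this; it controls the first leg of the colimit cone, not the map out of the colimit. In the settings where the lemma is actually applied (Grothendieck toposes, Grothendieck abelian categories, the category $\ma$ of this paper) the comparison map is monic because filtered colimits of monomorphisms are monomorphisms there, but that is an extra exactness property, not something your argument extracts from (1)--(3). A complete proof must address this point explicitly.

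The second gap is in the direction you call easy. The class $\mathsf I$-cof is closed under pushouts along \emph{arbitrary} maps, so the inclusion $\mathsf I\text{-cof}\subseteq\{\text{monomorphisms}\}$ requires that a pushout of a map of $\mathsf I$ along an arbitrary attaching map $A\to Z$ be a monomorphism. Effective unions say nothing about such pushouts: they only assert that the pushout of two subobjects of a common ambient object over their intersection embeds back into that object, and in a general cell attachment there is no ambient object in sight. (This is not a pedantic worry: in categories such as commutative rings, pushouts of monomorphisms fail badly to be monomorphisms.) Finally, your exhaustion step claims $Y$ is the $\kappa$-filtered colimit of its $\kappa$-presentable subobjects, but the canonical diagram only exhibits $Y$ as a colimit of $\kappa$-presentable objects \emph{mapping to} $Y$; their images are $\kappa$-generated (strong quotients of presentables), not subobjects of presentables, so closure of the $\kappa$-presentables under subobjects does not yield your claim. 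The standard repair is to work with $\kappa$-generated subobjects (every object of a locally presentable category is the $\kappa$-directed union of these) and to take $\mathsf I$ to be a representative set of monomorphisms with $\kappa$-generated codomain, which is a set by well-poweredness; this also sidesteps your delicate cardinal-closure argument, and smallness of domains is automatic in a locally presentable category, so the small object argument poses no problem.
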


Recall that a
model structure is {\em combinatorial} if it is cofibrantly generated and the underlying category
is locally presentable.

\begin{defn}\label{defn:scm} 
A combinatorial model structure such that any cofibration is a monomorphism 
is a {\em standard combinatorial model}  (SCM) structure if the underlying
category $\cat M$ satisfies the hypotheses of Lemma \ref{lem:beke}. 
\end{defn}

We need one more definition before constructing the injective model structure on $\cat M$ .

\begin{defn}\cite[A.1.2.2]{lurie}\label{defn:weaksat}  A class of morphisms in a category is \emph{weakly saturated} if it is closed under pushouts, transfinite compositions and retracts.
\end{defn}
 
\begin{thm}\label{thm:icm}  Let $\cat M$ be a category with an SCM structure with weak equivalences $\mathsf W$.   Let $\mathsf C$ denote the class of monomorphisms in $\cat M$.  If $\mathsf W\cap \mathsf C$
is weakly saturated,  then there is a combinatorial model structure on $\cat M$ with weak equivalences $\mathsf W$ and cofibrations $\mathsf C$.   
\end{thm}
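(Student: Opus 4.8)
The plan is to invoke Smith's recognition theorem for combinatorial model structures, which characterizes when a prescribed class $\mathsf W$ of weak equivalences together with a generating set of cofibrations assembles into a combinatorial model structure. The hypotheses of that theorem are: (i) $\cat M$ is locally presentable; (ii) $\mathsf W$ is closed under retracts and satisfies the two-out-of-three property and is an \emph{accessible} subcategory of the arrow category $\cat M^{[1]}$; (iii) there is a set $\mathsf I$ generating a weakly saturated class $\mathsf I\text{-cof}$; and (iv) $\mathsf I\text{-cof}\subseteq \mathsf W$ and $\mathsf I\text{-inj}\cap \mathsf W$ is contained in (in fact equal to) $\mathsf I\text{-inj}$, i.e. the two classes fit together correctly. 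I follow Lurie's formulation \cite[A.2.6.8]{lurie}.

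First I would produce the generating set of cofibrations. Since $\cat M$ carries an SCM structure, by definition it satisfies the hypotheses of Lemma \ref{lem:beke}, so Lemma \ref{lem:beke} supplies a set $\mathsf I$ of monomorphisms with $\mathsf I\text{-cof}=\mathsf C$, the class of all monomorphisms. This is exactly the generating set we want, and it immediately gives that $\mathsf C=\mathsf I\text{-cof}$ is weakly saturated. Next I would verify the data Smith's theorem requires of $\mathsf W$. Local presentability of $\cat M$ is part of the SCM hypotheses. The two-out-of-three property and closure under retracts for $\mathsf W$ are inherited verbatim from the given SCM structure, since $\mathsf W$ is unchanged. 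Accessibility of $\mathsf W$ as a full subcategory of $\cat M^{[1]}$ also follows from the fact that $\mathsf W$ is the weak equivalence class of an \emph{already combinatorial} model structure: the weak equivalences of a combinatorial model category always form an accessible subcategory of the arrow category (this is a standard consequence of combinatoriality, e.g. via Dugger's presentation or the argument in \cite[A.2.6.6]{lurie}).

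Then I would check the two compatibility conditions tying $\mathsf I$ to $\mathsf W$. One direction, $\mathsf I\text{-cof}\subseteq \mathsf W$, is the assertion that every monomorphism is a weak equivalence, which is generally \emph{false}; so rather than applying the crude form of Smith's theorem I would use the version that takes as input a candidate class of \emph{trivial} cofibrations. The hypothesis $\mathsf W\cap\mathsf C$ weakly saturated is precisely what lets us take $\mathsf W\cap\mathsf C$ as the class of trivial cofibrations. I would show $\mathsf W\cap\mathsf C$ is generated by a set: by accessibility of $\mathsf W$ and local presentability of $\cat M$, the intersection $\mathsf W\cap\mathsf C$ is an accessible, weakly saturated class, hence (by the accessibility argument in \cite[A.1.5.12, A.2.6.6]{lurie}) cofibrantly generated by some set $\mathsf J$. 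The maps with the right lifting property against $\mathsf J$ are then the fibrations, and the remaining Smith-theorem verifications — that $\mathsf I\text{-inj}\subseteq \mathsf W$ (a map right-orthogonal to all monomorphisms is in particular a trivial fibration, hence a weak equivalence) and that a map is in $\mathsf W$ and has the right lifting property against $\mathsf J$ exactly when it has the right lifting property against $\mathsf I$ — follow formally.

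The main obstacle, and the step deserving the most care, is establishing that $\mathsf W\cap\mathsf C$ is \emph{accessible} (and hence set-generated), since weak saturation alone does not give accessibility. This is where I would lean hardest on combinatoriality: $\mathsf C=\mathsf I\text{-cof}$ is accessible as a class because it is cofibrantly generated, $\mathsf W$ is accessible by the combinatoriality of the SCM structure, and an intersection of two accessible subcategories of $\cat M^{[1]}$ that are closed under filtered colimits is again accessible by the theory of accessible categories \cite[A.1.5, A.2.6.4]{lurie}. Once accessibility of $\mathsf W\cap\mathsf C$ is in hand, extracting the generating set $\mathsf J$ and assembling the model structure is routine and purely formal via Smith's theorem. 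I would close by noting that the resulting cofibrations are exactly $\mathsf C=\mathsf I\text{-cof}$ and the weak equivalences are exactly $\mathsf W$, as claimed, and that the structure is combinatorial because both generating sets $\mathsf I$ and $\mathsf J$ are sets and $\cat M$ is locally presentable.
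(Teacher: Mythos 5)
Your proposal is correct and follows essentially the same route as the paper: both proofs invoke Smith's recognition theorem in Lurie's formulation \cite[A.2.6.8]{lurie}, obtain the generating set $\mathsf I$ of monomorphisms from Lemma \ref{lem:beke}, get accessibility and two-out-of-three for $\mathsf W$ from combinatoriality of the SCM structure, take weak saturation of $\mathsf C\cap\mathsf W$ as the stated hypothesis, and deduce $\mathsf C\text{-inj}\subseteq\mathsf W$ from the fact that SCM cofibrations are monomorphisms, so $\mathsf C\text{-inj}\subseteq\mathsf C_{s}\text{-inj}=$ SCM trivial fibrations. The one place you diverge is not a gap but a redundancy: what you call the ``main obstacle''---producing a generating set $\mathsf J$ for $\mathsf C\cap\mathsf W$---is not an input to \cite[A.2.6.8]{lurie} at all; the theorem asks only that $\mathsf C\cap\mathsf W$ be weakly saturated and that $\mathsf W$ be accessible, and the set-generation of the trivial cofibrations is precisely what Smith's argument produces internally. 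So your final paragraph is in effect a sketch of part of the proof of the cited theorem rather than a verification of one of its hypotheses; the paper's proof simply checks the five conditions and stops.
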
  

\begin{term}  We refer to the model category stucture of the theorem above as the \emph{associated injective combinatorial model (ICM) category structure} on $\cat M$.
\end{term}

\begin{proof} We check the conditions from A.2.6.8 in \cite{lurie}. 

\begin{enumerate}
\item $\mathsf C$ is weakly saturated and generated by $\mathsf C_0$.

We take $\mathsf C_0$ to be the set of monomorphisms $\mathsf I$, the existence of which follows from Definition \ref{defn:scm} and Lemma \ref{lem:beke}.   Condition (1) then holds by definition since
$\mathsf C =  \mathsf I$-cof is weakly saturated by \cite[A.1.2.7]{lurie}.

\item  $\mathsf C\cap \mathsf W$ is weakly saturated.

This condition is the hypothesis of our theorem.

\item $\mathsf W$ is accessible.
           
This follows from \cite[4.1]{rosicky} or \cite[A.2.6.6]{lurie}: since the SCM structure on $M$ is combinatorial, $\mathsf W$ is accessible.
           
\item $\mathsf W$ satisfies the ``2 out of 3'' property.   

This is true because $\mathsf W$ is the set of weak equivalences of the original SCM structure on $\cat M$.

\item $ \mathsf C-{\text{inj}}\subseteq \mathsf W$.     
 
Let $\mathsf C_{s}$ be the cofibrations in the SCM structure on $\cat M$.  By definition
 $\mathsf C_s \subseteq \mathsf C$,  so
$$ \mathsf C-{\text{inj}}\subseteq \mathsf C_{s}-{\text{inj}}.$$ Since $\mathsf C_{s}-{\text{inj}}$
is the class of trivial fibrations in the original SCM structure on $\cat M$, it follows that
$ \mathsf C-{\text{inj}}\subseteq \mathsf W$.
\end{enumerate}

\end{proof}

\subsection{An ICM structure for dg modules}\label{subsec:dga}
 For any commutative ring $R$, let  $\ch$ denote the category of nonnegatively graded chain complexes of $R$-modules, endowed with its usual graded tensor product, which we denote simply $\otimes$. If $A$ is a monoid in $\ch$, i.e., a differential graded (dg) algebra, let $\ma$  denote the category of right $A$-modules.
 
 We begin by a few easy but useful observations concerning the categorical properties of $\ma$.
 
 \begin{lem}\label{lem:mono-mono}  A morphism in $\ma$ is a monomorphism if and only if the underlying morphism in $\ch$ is a monomorphism.
\end{lem}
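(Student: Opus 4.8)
The plan is to reduce everything to two standard facts about the forgetful functor $U:\ma\to\ch$: that it is faithful, and that it is a right adjoint (hence limit-preserving). First I would record the free/forgetful adjunction. The functor $-\otimes A:\ch\to\ma$, sending a chain complex $X$ to the free right $A$-module $X\otimes A$, is left adjoint to $U$: a right $A$-module map out of $X\otimes A$ is determined by its restriction along the unit inclusion $X\cong X\otimes R\hookrightarrow X\otimes A$, which gives a natural bijection $\ma(X\otimes A,M)\cong\ch(X,UM)$. Consequently $U$ preserves all limits, and it is visibly faithful, since a map of $A$-modules is in particular a map of the underlying chain complexes and carries the same data.

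For the ``if'' direction I would argue directly from faithfulness. Suppose $Uf$ is a monomorphism in $\ch$, for a morphism $f:M\to M'$ of right $A$-modules, and let $g,g':L\to M$ in $\ma$ satisfy $fg=fg'$. Applying $U$ yields $Uf\circ Ug=Uf\circ Ug'$, whence $Ug=Ug'$ because $Uf$ is monic, and then $g=g'$ by faithfulness of $U$. Thus $f$ is a monomorphism; this half uses only that faithful functors reflect monomorphisms.

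For the ``only if'' direction I would use that $U$ preserves the relevant limit. A morphism $f$ is a monomorphism precisely when the square with both legs into $M'$ equal to $f$ and both projections out of $M$ equal to $\id_M$ is a pullback (its kernel pair is the diagonal); equivalently, since $\ma$ is abelian, when $\ker f=0$. Since $U$ preserves limits, it carries this pullback to the corresponding pullback for $Uf$ --- equivalently $\ker(Uf)=U(\ker f)=U(0)=0$ --- so $Uf$ is a monomorphism in $\ch$. The only step demanding any care is this preservation of monomorphisms, and it rests entirely on $U$ being a right adjoint; granting the adjunction of the first paragraph, both implications are immediate.
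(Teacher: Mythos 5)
Your proof is correct, and it decomposes the lemma exactly as the paper does---faithfulness of $U$ gives one implication, the adjunction $-\otimes A \dashv U$ gives the other---but the mechanism you use for the harder direction is genuinely different. The paper argues by contrapositive, entirely by hand: if $Uf$ is not a monomorphism, it picks distinct chain maps $x,y\colon X\to UM$ with $Uf\circ x = Uf\circ y$ and transposes them across the adjunction to obtain distinct $A$-module maps $x^\flat,y^\flat\colon X\otimes A\to M$ with $f\circ x^\flat = f\circ y^\flat$, so $f$ is not a monomorphism; this is the hom-set proof that right adjoints preserve monomorphisms, unrolled in this specific instance. You instead invoke the general facts that a morphism is monic precisely when its kernel pair is the diagonal (equivalently, in this abelian setting, when its kernel vanishes) and that right adjoints preserve all limits, hence pullbacks and kernels. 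Your route is more conceptual and transfers verbatim to any right adjoint between categories with the relevant limits; the paper's is more elementary, needing only the adjunction bijection and its naturality, with no appeal to the existence or preservation of limits in $\ma$ or $\ch$. Both arguments are complete.
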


\begin{proof}  Let $U:\ma \to \ch$ denote the forgetful functor. Let $f:M\to N$ be a morphism in $\ma$. It is obvious that if $U f$ is a monomorphism, then $f$ is as well.

If $Uf$ is not a monomorphism, then there exist $x,y: X\to UM$ in $\ch$ such that $x\not=y$ but $U f \circ x=U f \circ y: X\to UN$.  Taking transposes, we obtain
$$f\circ x^\flat = (Uf \circ x)^\flat=(U f \circ y)^\flat=f\circ y^\flat,$$
and thus $f$ is not a monomorphism, since $x^\flat \not= y^\flat$.
\end{proof}

\begin{lem}\label{lem:effunion-alg} The category $\ma$ has effective unions.
\end{lem}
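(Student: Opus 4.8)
The plan is to reduce the assertion to the corresponding elementary statement for $R$-modules by means of the forgetful functor $U:\ma\to\ch$, exploiting the fact that $U$ creates the pullbacks and pushouts that enter the definition of effective unions.

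First I would record that $U$ creates both limits and colimits. Limits are immediate, since $U$ has the left adjoint $-\otimes A$ and limits of $A$-modules are computed on underlying chain complexes. For colimits the essential point is that $-\otimes A:\ch\to\ch$ preserves all colimits: as $\ch$ is closed symmetric monoidal, $-\otimes A$ is a left adjoint. Viewing $\ma$ as the category of algebras for the monad $-\otimes A$ on $\ch$, it follows that $U$ creates all colimits, since the colimit of a diagram of $A$-modules may be formed on underlying chain complexes and equipped with a canonical module structure once $-\otimes A$ (and $(-\otimes A)\otimes A$) preserve that colimit. Consequently, given monomorphisms $A\xrightarrow a X\xleftarrow b B$ in $\ma$, both the pullback $A\underset X\times B$ and the pushout $A\cup B=A\coprod_{A\underset X\times B}B$ are computed by $U$ in $\ch$, and these in turn are computed degreewise in the category of $R$-modules.

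Next, by Lemma \ref{lem:mono-mono} a morphism of $\ma$ is a monomorphism if and only if its underlying morphism in $\ch$ is one, and a morphism in $\ch$ is a monomorphism if and only if it is a degreewise monomorphism of $R$-modules. Combining this with the previous step, it suffices to verify that the category of $R$-modules has effective unions. There the computation is standard: the pullback of two submodule inclusions $A\hookrightarrow X\hookleftarrow B$ is their intersection, the pushout $A\cup B$ is $(A\oplus B)\big/\{(x,-y):(x,y)\in A\underset X\times B\}$, and the canonical map $A\cup B\to X$ sending $[(u,v)]\mapsto a(u)+b(v)$ has image $A+B$ and trivial kernel: any $[(x,y)]$ with $a(x)=-b(y)$ satisfies $(x,-y)\in A\underset X\times B$, whence $(x,y)$ lies in the quotiented submodule and represents $0$. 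The maps $A\to A\cup B$ and $B\to A\cup B$ are then monomorphisms as well, since their composites with the monomorphism $A\cup B\to X$ are the monomorphisms $a$ and $b$. This is simply the familiar fact that every abelian category—in particular the category of $R$-modules, and hence $\ch$—has effective unions.

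The one point requiring care, and the crux of the argument, is the claim that the defining pullback and pushout are created by $U$, i.e.\ that colimits in $\ma$ are computed on underlying chain complexes; this rests entirely on $-\otimes A$ preserving colimits. Once that is in place, everything reduces to the elementary verification in $R$-modules above.
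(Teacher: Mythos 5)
Your proof is correct and follows essentially the same route as the paper's: reduce via the forgetful functor $U:\ma\to\ch$, using that pullbacks and pushouts in $\ma$ are created in $\ch$, that monomorphisms are detected by $U$ (Lemma \ref{lem:mono-mono}), and that $\ch$ has effective unions. The only difference is that you spell out what the paper treats as immediate, namely the monadicity argument for creation of colimits and the explicit verification of effective unions in $R$-modules.
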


\begin{proof}  Since pullbacks and pushouts in $\ma$ are created in $\ch$ and $\ch$ clearly has effective unions, this lemma is an immediate consequence of Lemma \ref{lem:mono-mono}. \end{proof}

\begin{lem}\label{lem:weaksat}  The class of monomorphisms in $\ma$ is closed under transfinite composition, and the class of monomorphisms in $\ma$ that are also quasi-isomorphisms is weakly saturated.
\end{lem}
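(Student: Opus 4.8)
The plan is to prove the two assertions separately, working throughout via the forgetful functor $U:\ma\to\ch$ and Lemma \ref{lem:mono-mono}, which reduces the statement about monomorphisms to the analogous (and easy) statement in $\ch$. For the first claim, I would recall that transfinite composites of monomorphisms in $\ma$ are computed by taking the corresponding colimit in $\ma$; since $U$ is a left adjoint (it has the right adjoint $\hom$-into-$A$, or one observes directly that colimits in $\ma$ are created in $\ch$), the underlying map of the transfinite composite is the transfinite composite in $\ch$. In $\ch$, a transfinite composite of monomorphisms is a monomorphism because a filtered colimit of injections of $R$-modules is an injection in each degree (filtered colimits are exact for modules, or more elementarily, an element mapping to zero must already be zero at some stage). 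Hence by Lemma \ref{lem:mono-mono} the transfinite composite is a monomorphism in $\ma$.

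For the second claim I must show that $\mathsf C\cap\mathsf W$, the monomorphisms that are quasi-isomorphisms, is closed under pushouts, transfinite compositions, and retracts. Closure under retracts is formal: both being a monomorphism and being a quasi-isomorphism are retract-closed (quasi-isomorphisms because they are the weak equivalences of a model-type structure, or directly because a retract of an isomorphism on homology is an isomorphism). For pushouts, given a trivial monomorphism $i:M\to N$ pushed out along $M\to P$ to give $j:P\to Q$, I would again pass to $\ch$: the pushout is created in $\ch$, so $Uj$ is the pushout of $Ui$. That $Uj$ is a monomorphism follows as before; that $Uj$ is a quasi-isomorphism is the key homological input. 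Since $Ui$ is an injection of chain complexes, its cokernel is acyclic (as $Ui$ is a quasi-isomorphism and we have the long exact sequence of the short exact sequence $0\to M\to N\to\coker\to 0$), and the cokernel of the pushout $Uj$ is isomorphic to that same cokernel $\coker(Ui)$; thus $Uj$ is also a monomorphism with acyclic cokernel, hence a quasi-isomorphism.

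Closure under transfinite composition of trivial monomorphisms combines the two techniques: at each successor stage the map is a trivial monomorphism, and I must check that the transfinite composite remains one. Monomorphism-closure is the first claim. For the quasi-isomorphism part, I would use that filtered colimits in $\ch$ are exact, so homology commutes with the transfinite colimit; since each leg into the colimit is a quasi-isomorphism and the composites stabilize homology, the map from the initial term to the colimit induces an isomorphism on homology. The main obstacle I anticipate is precisely the homological bookkeeping in this transfinite step --- ensuring that exactness of filtered colimits is applied correctly at limit ordinals and that the identification of cokernels in the pushout step is genuinely an isomorphism of complexes over $A$, not merely of graded $R$-modules. Both reduce to the exactness of filtered colimits and the created-in-$\ch$ nature of the relevant (co)limits, so once those are invoked cleanly the argument should close without further complication.
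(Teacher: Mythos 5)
Your proposal is correct and follows essentially the same route as the paper's own proof: transfinite composites of monomorphisms are handled as (filtered) colimits of injections, the quasi-isomorphism part of transfinite composites uses that homology commutes with filtered colimits, retract-closure is formal, and pushouts are handled by observing that the cokernel of a monomorphism of chain complexes is preserved under pushout and is acyclic exactly when the map is a quasi-isomorphism. The only cosmetic difference is that you route everything through the forgetful functor to $\ch$ and Lemma \ref{lem:mono-mono}, while the paper works directly with $A$-modules viewed as inclusions; the mathematical content is identical.
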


\begin{proof} The transfinite composition of a sequence
$$M_{0}\hookrightarrow M_{1}\hookrightarrow \cdots \hookrightarrow M_{n}\hookrightarrow M_{n+1}\hookrightarrow \cdots$$
of monomorphisms of $A$-modules (seen, without loss of generality, as a sequence of inclusions) is simply the inclusion $M_{0}\hookrightarrow \bigcup _{n\geq 0}M_{n}$.  Transfinite compositions for larger ordinals are constructed similarly. The class of monomorphisms in $\ma$ is therefore closed under transfinite composition. 

Since homology commutes with filtered colimits, it follows that the transfinite composition of a sequence of monomorphisms that are quasi-isomorphisms is both a monomorphism and a quasi-isomorphism.  Furthermore the class of monomorphisms is closed under retracts for categorical reasons, and it is well known that the class of quasi-isomorphisms is as well.

Finally, since the cokernel of a monomorphism $j$ of chain complexes is acyclic if and only if $j$ is a quasi-isomorphism,  a pushout of a monomorphism that is a quasi-isomorphism is again a monomorphism and a quasi-isomorphism, as the cokernel of a pushout of $j$ along any morphism is isomorphic to $\coker j$.  
\end{proof}

\begin{prop}\label{prop:moda} For any dg $R$-algebra $A$, the category $\ma$ of right $A$-modules admits a combinatorial model category structure in which the cofibrations are the monomorphisms,  and the weak equivalences are the quasi-isomorphisms.  
\end{prop}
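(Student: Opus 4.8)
The plan is to realize the asserted structure as the injective combinatorial model (ICM) structure associated, via Theorem \ref{thm:icm}, to a suitable standard combinatorial model (SCM) structure on $\ma$. The first task is therefore to exhibit an SCM structure on $\ma$ whose weak equivalences are the quasi-isomorphisms. I would take the \emph{projective} model structure, transferred along the free/forgetful adjunction $-\otimes A\colon \ch \adjunct{}{} \ma \colon U$ from the usual projective model structure on $\ch$, whose weak equivalences are the quasi-isomorphisms. The transfer goes through for an arbitrary dg $R$-algebra $A$: the generating trivial cofibrations of $\ch$ are maps of the form $0\to D^n$ with $D^n$ contractible, and since $-\otimes A$ is additive it carries a contraction of $D^n$ to a contraction of $D^n\otimes A$, so every relative cell complex built from $\{0\to D^n\otimes A\}$ is a quasi-isomorphism; moreover $U$ preserves filtered colimits, so the small object argument applies and the fibrations and weak equivalences of the transferred structure are created in $\ch$.

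Next I would verify that this projective structure is genuinely an SCM structure in the sense of Definition \ref{defn:scm}. Its cofibrations are (degreewise split) monomorphisms, as is standard for projective cofibrations of chain complexes and modules, so every cofibration is a monomorphism; combined with Lemma \ref{lem:mono-mono} this identifies them with monomorphisms on underlying complexes. It then remains to confirm the hypotheses of Lemma \ref{lem:beke} for $\ma$. Local presentability holds because $\ma$ is the category of algebras for the monad $-\otimes A$ on the locally presentable category $\ch$; as $-\otimes A$ is a left adjoint it preserves filtered colimits, whence the monad is accessible and its category of algebras is again locally presentable. Effective unions are provided by Lemma \ref{lem:effunion-alg}, and the class of monomorphisms is closed under transfinite composition by Lemma \ref{lem:weaksat}. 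Thus the projective structure is an SCM structure with $\mathsf W$ the class of quasi-isomorphisms.

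Finally I would apply Theorem \ref{thm:icm}. Taking $\mathsf C$ to be the class of monomorphisms in $\ma$, the only outstanding hypothesis is that $\mathsf W\cap\mathsf C$, the monomorphisms that are quasi-isomorphisms, be weakly saturated; but this is precisely the content of Lemma \ref{lem:weaksat}. The theorem then yields a combinatorial model structure on $\ma$ with weak equivalences the quasi-isomorphisms and cofibrations exactly the monomorphisms, which is the claim. I expect the one genuinely substantive step to be the construction and identification of the underlying SCM structure, namely checking that the transferred projective structure exists, has monomorphic cofibrations, and sits on a locally presentable category, since the weak saturation of $\mathsf W\cap\mathsf C$ that powers Theorem \ref{thm:icm} has already been secured in Lemma \ref{lem:weaksat}.
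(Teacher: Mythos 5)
Your proposal is correct and follows essentially the same route as the paper: transfer the projective structure from $\ch$ to $\ma$ along $-\otimes A$, check it is an SCM structure (monomorphic cofibrations via Lemma \ref{lem:mono-mono}, local presentability, Lemmas \ref{lem:effunion-alg} and \ref{lem:weaksat}), and then invoke Theorem \ref{thm:icm} with the weak saturation supplied by Lemma \ref{lem:weaksat}. The only cosmetic difference is that you argue the existence of the transferred structure and the monomorphy of its cofibrations directly, where the paper cites \cite{schwede-shipley} and verifies monomorphy on the generating cofibrations $S^{n}\hookrightarrow D^{n+1}$.
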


Note that this proposition implies, obviously, that $\ch$ itself admits a ICM structure.

\begin{proof}  There is a combinatorial model structure on $\ma$ obtained by right transfer of  the projective structure on $\ch$via the adjunction
$$\ch \adjunct{-\otimes A}{U} \ma,$$
as described in \cite{schwede-shipley}.  The fibrations in this model category structure are the chain maps that are surjections in positive degrees, and the weak equivalences are the quasi-isomorphisms. Let $\mathsf I$ denote the set of  generating cofibrations of the projective model structure on $\ch$. Recall that the class of cofibrations in this right-induced structure on $\ma$ can be constructed  by taking transfinite composition of pushouts of morphisms of the form $i\otimes A$ for $i\in \mathsf I$ and retractions of such. 

Recall moreover that $\mathsf I=\{S^{n}\hookrightarrow D^{n+1}\mid n\geq 0\}$, where $S^{n}=(R\cdot x_{n},0)$, the chain complex freely generated by exactly one generator of degree $n$, while $D^{n+1}=\big(R\cdot (x_{n},y_{n+1}),d\big )$, the chain complex freely generated by  one generator of degree $n$ and one of degree $n+1$, with $dy=x$.
If $i\in \mathsf I$, then $i\otimes A$ is a monomorphism of chain complexes, as the source and target of $i$ are degreewise $R$-free.  Since monomorphisms of chain complexes are preserved under pushout, transfinite composition and retraction, and colimits in $\ma$ are created in $\ch$,  the morphism of chain complexes underlying any cofibration in the induced model structure on $\ma$ is a monomorphism.  Lemma \ref{lem:mono-mono} therefore implies that every cofibration in the right-induced structure on $\ma$ is a monomorphism of right $A$-modules. 

The category $\ch$ is locally presentable~\cite[3.7]{shipley-hz}.  It follows that $\ma$ is also locally presentable, as $-\otimes A$ preserves filtered colimits \cite{adamek-rosicky}, \cite{gabriel-ulmer}.  

The existence of the desired model category structure on $\ma$ follows therefore from Lemma \ref{lem:weaksat} and Theorem \ref{thm:icm}. 
\end{proof}

\section{Left-induced model category structures}\label{sec:induced}

A common way of creating model structures is by transfer across adjunctions, such as the left-to-right adjunction applied in the proof of Proposition \ref{prop:moda}.   To construct model category structures on categories of coalgebras over a comonad, we need right-to-left transfer, as specified in the following definition.

\begin{defn}\label{defn:induction} 
Let $L: \cat C \to \cat M$ be a functor, where $\cat M$ is a model category. A model structure on $\cat C$ is \emph{left-induced} from $\cat M$ if $\mathsf {WE}_{\cat C}=L^{-1}(\mathsf {WE}_{\cat M})$ and $\mathsf {Cof}_{\cat C}=L^{-1}(\mathsf {Cof}_{\cat M})$.
\end{defn}

\begin{rmk} The terminology above is motivated by the fact that it is most natural to consider such model category structures when the functor $L$ is a left adjoint, such as the forgetful functor from the category of coalgebras over some comonad to the underlying category.
\end{rmk}

Before giving conditions under which left-induced structures exist, we introduce a bit of useful notation.

\begin{notn}Let $\mathsf X$ be any class of morphisms in a category $\cat C$. 
The closure of $\mathsf X$ under formation of retracts is denoted $\widehat{\mathsf X}$, i.e., 
$$f\in \widehat{\mathsf X}\Longleftrightarrow  \exists\; g\in \mathsf X \text{ such that $f$ is a retract of $g$} .$$
\end{notn}

\begin{defn} \label{defn:postnikov} Let $\mathsf X$ be a class of morphisms in a category $\cat C$ that is closed under pullbacks.  If $\lambda$ is an ordinal, and  $Y:\lambda ^{op}\to \cat C$ is a functor such that for all $\beta <\lambda$, the morphism $Y_{\beta +1}\to Y_{\beta}$ fits into a pullback
$$\xymatrix{Y_{\beta +1}\ar [d]_{}\ar [r]^{}&X'_{\beta+1}\ar [d]^{x_{\beta+1}}\\ Y_{\beta}\ar [r]^{k_{\beta}}&X_{\beta+1}}$$
for some $x_{\beta +1}: X'_{\beta +1}\to X_{\beta+1}$ in $\mathsf X$ and  $k_{\beta}:Y_{\beta}\to X_{\beta+1}$ in $\cat C$, while
$Y_{\gamma}:=\lim _{\beta<\gamma}Y_{\beta}$ for all limit ordinals $\gamma<\lambda$,
then the composition of the tower $Y$
$$\lim_{\lambda^{op}}Y_{\beta}\to Y_{0},$$
\emph{if it exists}, is an \emph{$\mathsf X$-Postnikov tower}.  

A \emph{Postnikov presentation} of a model category $(\cat M, \mathsf{Fib}, \mathsf{Cof}, \mathsf {WE})$ is  a pair of  classes of morphisms $\mathsf X$ and $\mathsf Z$ satisfying 
$$\mathsf {Fib}=\widehat{\mathsf {Post}_{\mathsf X}}\quad\text { and }\quad \mathsf {Fib}\cap \mathsf {WE}=\widehat{\mathsf {Post}_{\mathsf Z}}$$ 
and such that for all $f\in \mor \cat M$,
there exist 
\begin{enumerate}
\item [(a)] $i\in \mathsf{Cof} $ and $p\in \mathsf {Post}_{\mathsf Z}$ such
that $f=pi$; 
\item [(b)] $j\in \mathsf{Cof}\cap \mathsf{WE}$ and $ q\in \mathsf {Post}_{\mathsf X}$ such that
$f=qj$.
\end{enumerate}
\end{defn}

\begin{rmk}\label{rmk:post-closure} For any $\mathsf X$, the class  $\mathsf{Post}_{\mathsf X}$ is closed under pullbacks, since inverse limits commute with pullbacks.  Furthermore, $\mathsf{Post}_{\mathsf X}$ is clearly closed under composition of towers  as well.
\end{rmk}

\begin{rmk}\label{rmk:postnikov-lift} Let $\mathsf X$ and $\mathsf Y$ be two classes of morphisms in  a category $\cat C$ admitting pullbacks and inverse limits. If $\mathsf X\subset \mathsf{Y}$-inj, then $\mathsf {Post}_{\mathsf X}\subset \mathsf{Y}$-inj as well, and therefore $\widehat{\mathsf {Post}_{\mathsf X}}\subset \mathsf{Y}$-inj. In particular, for any model category $(\cat M, \mathsf{Fib}, \mathsf{Cof}, \mathsf {WE})$, the pair $(\mathsf {Fib}, \mathsf {Fib}\cap \mathsf {WE})$ is  a Postnikov presentation, which we call the \emph{generic Postnikov presentation} of $\cat M$.
\end{rmk}

The following right-to-left transfer theorem for Postnikov model structures was proved in \cite{hess:hhg}.

\begin{thm} \label{thm:postnikov} Let $(\cat M, \mathsf{Fib}, \mathsf{Cof}, \mathsf {WE})$ be a model category with Postnikov presentation $(\mathsf X, \mathsf Z)$.
Let $\cat C$ be a bicomplete category (i.e., $\cat C$ admits all limits and colimits), and let $L:\cat C\adjunct {}{}\cat M:R$ be an adjoint pair of functors.  Let 
$$\mathsf W=L^{-1}(\mathsf{WE}) \text { and } \mathsf C=L^{-1}(\mathsf{Cof}).$$   
If 
\begin{enumerate}
\item [(a)]$\mathsf{Post}_{R(\mathsf Z)}\subset \mathsf W$,
\end{enumerate}
 and for all $f\in \mor \cat C$
there exist 
\begin{enumerate}
\item [(b)] $i\in \mathsf{C} $ and $p\in \mathsf{Post}_{R( \mathsf Z)}$ such
that $f=pi$, and
\smallskip
\item [(c)]$j\in \mathsf{C}\cap \mathsf{W}$ and $q\in \mathsf{Post}_{R(\mathsf X)}$ such that
$f=qj$,
\end{enumerate}
then    $\mathsf W$, $\mathsf C$  and $\widehat{\mathsf{Post}}_{R(\mathsf X)}$ are the weak equivalences, cofibrations and fibrations in a model category structure on $\cat C$, with respect to which $L:\cat C \adjunct {}{}Ê\cat M: R$ is a Quillen pair.
\end{thm}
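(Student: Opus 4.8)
The plan is to verify the model-category axioms directly for the classes $\mathsf W = L^{-1}(\mathsf{WE})$, $\mathsf C = L^{-1}(\mathsf{Cof})$ and $\mathsf{Fib}_{\cat C} := \widehat{\mathsf{Post}_{R(\mathsf X)}}$, extracting the factorization axioms from hypotheses (a)--(c) and the lifting axioms from the standard adjunction lifting correspondence. Bicompleteness of $\cat C$ ensures that the limits implicit in $\mathsf{Post}_{R(\mathsf Z)}$ and $\mathsf{Post}_{R(\mathsf X)}$ exist, so these classes are well defined, and that the hypotheses of Remark \ref{rmk:postnikov-lift} are satisfied.

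\emph{Preliminaries.} Because $L$ is a functor that preserves composites and retract diagrams, the 2-out-of-3 property and retract-closure of $\mathsf W$, and the retract-closure of $\mathsf C$, are inherited immediately from $\mathsf{WE}$ and $\mathsf{Cof}$; the class $\mathsf{Fib}_{\cat C}$ is retract-closed by construction. This disposes of the 2-out-of-3 and retract axioms. I would then record the basic adjunction fact that a morphism $u$ in $\cat C$ has the left lifting property against $Rf$ if and only if $Lu$ has the left lifting property against $f$. Combined with the inclusions $\mathsf X \subseteq \mathsf{Post}_{\mathsf X} \subseteq \mathsf{Fib}$ and $\mathsf Z \subseteq \mathsf{Post}_{\mathsf Z} \subseteq \mathsf{Fib}\cap\mathsf{WE}$, this yields two lifting statements: (L1) every map in $\mathsf C\cap\mathsf W$ lifts against $R(\mathsf X)$ (since $Lu$ is then a trivial cofibration and each $f\in\mathsf X$ a fibration), and (L2) every map in $\mathsf C$ lifts against $R(\mathsf Z)$ (since $Lu$ is a cofibration and each $f\in\mathsf Z$ a trivial fibration). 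Remark \ref{rmk:postnikov-lift} upgrades these to lifting against $\widehat{\mathsf{Post}_{R(\mathsf X)}}=\mathsf{Fib}_{\cat C}$ and against $\widehat{\mathsf{Post}_{R(\mathsf Z)}}$, respectively; in particular (L1) already gives the trivial-cofibration/fibration lifting axiom.

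\emph{Identifying the trivial fibrations.} The crux of the argument, and the step I expect to be the main obstacle, is to prove $\mathsf{Post}_{R(\mathsf Z)}\subseteq\mathsf{Fib}_{\cat C}$, since this is exactly what makes the right-hand factor produced by (b) a genuine \emph{fibration} and not merely a weak equivalence. I would argue by the retract argument: given $p\in\mathsf{Post}_{R(\mathsf Z)}$, hypothesis (a) gives $p\in\mathsf W$, and (c) writes $p=qj$ with $j\in\mathsf C\cap\mathsf W$ and $q\in\mathsf{Post}_{R(\mathsf X)}$; since $j\in\mathsf C$ lifts against $p$ by (L2), the lift realizes $p$ as a retract of $q$, so $p\in\widehat{\mathsf{Post}_{R(\mathsf X)}}=\mathsf{Fib}_{\cat C}$. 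With this established, factorization (b) supplies the (cofibration, trivial fibration) factorization — the right factor lies in $\mathsf{Post}_{R(\mathsf Z)}\subseteq\mathsf{Fib}_{\cat C}$ and in $\mathsf W$ by (a) — while (c) supplies the (trivial cofibration, fibration) factorization, since $\mathsf{Post}_{R(\mathsf X)}\subseteq\mathsf{Fib}_{\cat C}$.

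\emph{The remaining lifting axiom, and the Quillen pair.} It remains to show that $\mathsf C$ lifts against $\mathsf{Fib}_{\cat C}\cap\mathsf W$. Here I would first prove the inclusion $\mathsf{Fib}_{\cat C}\cap\mathsf W\subseteq\widehat{\mathsf{Post}_{R(\mathsf Z)}}$ by a symmetric retract argument: for $g\in\mathsf{Fib}_{\cat C}\cap\mathsf W$, factor $g=pi$ via (b) with $i\in\mathsf C$ and $p\in\mathsf{Post}_{R(\mathsf Z)}$; since $p\in\mathsf W$ by (a) and $g\in\mathsf W$, the 2-out-of-3 property forces $i\in\mathsf C\cap\mathsf W$, and lifting $i$ against $g$ via (L1) exhibits $g$ as a retract of $p$. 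Then (L2), upgraded through Remark \ref{rmk:postnikov-lift}, gives that $\mathsf C$ lifts against $\widehat{\mathsf{Post}_{R(\mathsf Z)}}$ and hence against the smaller class $\mathsf{Fib}_{\cat C}\cap\mathsf W$, completing the lifting axioms. Finally, the Quillen pair assertion is immediate from the definitions: $L$ carries $\mathsf C$ into $\mathsf{Cof}$ and $\mathsf C\cap\mathsf W$ into $\mathsf{Cof}\cap\mathsf{WE}$, so $L$ is left Quillen.
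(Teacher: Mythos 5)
Your proof is correct. Note that the paper itself gives no proof of Theorem \ref{thm:postnikov} --- it is imported from \cite{hess:hhg} --- and your argument is exactly the standard verification used there: 2-out-of-3 and retract closure pulled back along $L$, the factorization axioms read off from hypotheses (a)--(c), the lifting axioms obtained from the adjunction correspondence together with Remark \ref{rmk:postnikov-lift}, and the two retract arguments placing $\mathsf{Post}_{R(\mathsf Z)}$ inside $\widehat{\mathsf{Post}_{R(\mathsf X)}}\cap\mathsf W$ and the trivial fibrations inside $\widehat{\mathsf{Post}_{R(\mathsf Z)}}$, so nothing essential differs.
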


\section{Postnikov presentations and coalgebras}

Let $\K=(K, \Delta, \ve)$ be a comonad on a model category $(\cat M, \mathsf{Fib}, \mathsf{Cof}, \mathsf {WE})$. In this section we apply Theorem \ref{thm:postnikov} to provide conditions on $\K$ and $\cat M$  that guarantee that the associated category of coalgebras $\cat M_{\K}$ inherits a left-induced model category structure from $\cat M$.  

Our proofs are inductive and require the following sort of filtered structure on $\cat M$.  Note that, to simplify notation, we henceforth often suppress explicit mention of the distinguished classes of morphisms $(\mathsf{Fib}, \mathsf{Cof}, \mathsf {WE})$ when we refer to the model category $(\cat M, \mathsf{Fib}, \mathsf{Cof}, \mathsf {WE})$.

\begin{defn}\label{defn:filt-we}  The model category $\cat M$  has \emph{filtered weak equivalences} if it is endowed with a decreasing filtration
$$\W\subseteq  ...\subseteq \W_{n+1}\subseteq \W_{n}\subseteq ...\subseteq \W_{-1}=\mor \cat M$$ satisfying the following axiom.
\begin{description}
\item [(WE1)] For all $n$,  $\W_{n}$ is closed under composition.  If $f \in \W_{n}$ for all $n$, then $f$ is in $\W$.  
Moreover, if $f:A\to B$ and $g:B\to C$ are composable morphisms, then
\begin {itemize}
\item $f, gf\in \W_{n}\Longrightarrow g\in \W_{n}$, 
\item $g, gf\in \W_{n}\Longrightarrow f\in \W_{n-1}$, and
\item $gf\in \W_{n}$ and $g\in \W\Longrightarrow f\in \W_{n}$.
\end{itemize}
\end{description}

We refer to the morphisms in $\W_{n}$ as \emph{$n$-equivalences} and denote an $n$-equivalence by $ {\sim_{n}}$.  An object $X$ in $\cat M$ is called \emph{$(n-1)$-connected} if the unique morphism from $X$ to a terminal object is an $n$-equivalence.
\end{defn}

The comonads we consider satisfy the following compatibility with the  model category structure on $\cat M$. 

\begin{defn}\label{defn:tractable} Let  $\cat M$ be a model category with Postnikov presentation $(\mathsf X, \mathsf Z)$.  
A comonad $\K$ on $\cat M$ is \emph{tractable} if  the following axioms hold.
\begin{description}
\item[(K0)] $\cat M_{\K}$ is complete.
\item[(K1)] $\delta: D\to KD\in \mathsf {Cof}$ for all $\K$-coalgebras $(D,\delta)$.
\item [(K2)] $K$ preserves cofibrations.
\item [(K3)] For all $i: (C,\gamma)\to F_{\K}X$ in $U_{\K}^{-1}(\mathsf{Cof})$ and all $g: (C,\gamma)\to (D,\delta)$ in $\cat M_{\K}$, the induced morphism of $\K$-coalgebras
$$(i,g): (C,\gamma) \to F_{\K}X \times (D,\delta)$$
is also in $U_{\K}^{-1}(\mathsf{Cof})$, if the product exists.
\item [(K4)] For all $q:E\to B$ in $\mathsf Z$ and for all  morphisms $f:(D,\delta) \to F_{\K}B$ in $\cat M_{\K}$, the induced morphism in $\cat M$
$$U_{\K}\big( (D,\delta) \times_{F_{\K}B}F_{\K}E\big) \to U_{\K}\big(D,\delta)$$
is in $\W$, if the pullback exists in $\cat M_{\K}$. 
\end{description}
\end{defn}

When $\cat M$ is endowed with an injective model structure, there is a simple condition under which axioms (K0) through (K3) hold.

\begin{lem}\label{lem:1st-factor} Let  $\K$ be a comonad on a well-powered model category $\cat M$ with injective model category structure.  Axioms (K0) through (K3) hold for $\K$ if and only if the underlying functor $K$ preserves monomorphisms.
\end{lem}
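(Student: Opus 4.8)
The plan is to prove the two implications separately, exploiting the fact that in an injective model structure the cofibrations are \emph{precisely} the monomorphisms, which trivializes most of the axioms. Throughout I would keep in mind that (K0)--(K3) are conditions on the \emph{underlying} morphisms in $\cat M$, accessed through the forgetful functor $U_{\K}$, not conditions internal to $\cat M_{\K}$.

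For the forward implication, suppose (K0)--(K3) hold. Here only axiom (K2) is needed: it asserts that $K$ preserves cofibrations, and since cofibrations coincide with monomorphisms in the injective structure, this is exactly the statement that $K$ preserves monomorphisms. So this direction is immediate.

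For the converse, suppose $K$ preserves monomorphisms, and verify the four axioms in turn, in the order (K2), (K0), (K1), (K3). Axiom (K2) is again immediate from injectivity. Axiom (K0) is precisely the conclusion of Proposition \ref{prop:adamek}: $\cat M$ is well-powered by hypothesis and $K$ preserves monomorphisms, so $\cat M_{\K}$ is complete. For axiom (K1), the counit law $\ve_{D}\circ \delta = \id_{D}$ exhibits $\delta\colon D\to KD$ as a split monomorphism, hence a monomorphism, hence---by injectivity---a cofibration; note that this argument uses neither completeness nor the hypothesis on $K$. Finally, for axiom (K3), completeness (now available from (K0)) guarantees that the product $F_{\K}X\times (D,\delta)$ exists, so $(i,g)$ is defined and satisfies $p_{1}\circ (i,g)=i$, where $p_{1}$ is the projection onto $F_{\K}X$. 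Applying $U_{\K}$ and using that $U_{\K}i$ is a monomorphism by the hypothesis on $i$, we get that $U_{\K}p_{1}\circ U_{\K}(i,g)$ is a monomorphism; since any morphism $f$ for which some postcomposite $h\circ f$ is a monomorphism is itself a monomorphism, $U_{\K}(i,g)$ is a monomorphism, i.e.\ $(i,g)\in U_{\K}^{-1}(\mathsf{Cof})$.

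I do not expect a genuine obstacle here: the substance of the lemma is the observation that injectivity makes (K1), (K2), and (K3) formal, while (K0) is exactly Ad\'amek's completeness theorem (Proposition \ref{prop:adamek}). The only point requiring care is the logical dependency among the axioms---(K3) presupposes the existence of the product, which becomes available only once (K0) is in hand---so the converse must be organized to establish completeness before attempting (K3), and one must consistently track monomorphism conditions on the underlying morphisms in $\cat M$.
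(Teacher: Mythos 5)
Your proof is correct and follows essentially the same route as the paper: (K1) via the split monomorphism $\ve_{D}\circ\delta=\id_{D}$, (K2) as an immediate reformulation of preservation of monomorphisms under injectivity, (K0) from Proposition \ref{prop:adamek}, and (K3) by composing with the projection onto $F_{\K}X$ and using that a morphism is a monomorphism whenever some postcomposite of it is. The only cosmetic difference is that the paper packages the (K3) step as a general statement about an adjoint pair $L\dashv R$, whereas you argue directly with $U_{\K}$; the underlying argument is identical.
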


\begin{rmk}  Many interesting comonads preserve monomorphisms.  We consider such an example, when $\cat M$ is a category of differential graded modules over a differential graded algebra, in the last two sections of this paper.  In \cite{hess-shipley2} we will treat examples of such comonads, when the underlying category is that of either pointed simplicial sets or Bousfield-Friedlander spectra.
\end{rmk}

\begin{proof}  Since the model category structure on $\cat M$ is injective, axiom (K1)  holds for all comonads $\K$, as every coalgebra structure map $\delta:D\to KD$ admits a retraction $\ve_{D}:KD\to D$ and is therefore a monomorphism.  Injectivity of the model category structure also implies that the functor $K$ preserves monomorphisms if and only if axiom (K2) is satisfied.

If $K$ preserves monomorphisms, then it follows from Proposition \ref{prop:adamek} that (K0) holds, while axiom (K3)  is a special case of the following result. Let $L:\cat C \adjunct {}{}\cat D:R$ be an adjoint pair of functors, and let $b:A\to B$ and $c:A\to C$ be morphisms in $\cat C$, inducing $(b,c): A\to B\times C$.  We claim that if $L(b)$ is a monomorphism, then $L(b,c)$ is as well. 

If $d,e:D\to L(A)$ are morphisms in $\cat D$ such that $L(b,c)\circ d =L(b,c)\circ e$, then 
$$L(b)\circ d =L(pr_{B})\circ L(b,c)\circ d = L(pr_{B})\circ L(b,c)\circ e= L(b)\circ e,$$
whence $d=e$, since $L(b)$ is a monomorphism.  We conclude that $L(b,c)$ is also a monomorphism.
\end{proof}

\begin{rmk} 
Let $L:\cat C \adjunct {}{}\cat D:R$ be an adjoint pair of functors.  If $L$ is faithful, then 
$$L^{-1}(\mathsf{Mono}_{\cat D})\subset \mathsf{Mono}_{\cat C}.$$
Indeed, if $f:A\to B$ is a morphism in $\cat C$ such that $L(f)$ is a monomorphism, and $g,h:C\to A$ are morphisms in $\cat C$ such that $fg=fh$, then $L(f)L(g)=L(f)L(h)$, whence $L(g)=L(h)$, as $L(f)$ is a monomorphism.  Since $L$ is faithful, we can conclude that $g=h$ and therefore that $f$ is a monomorphism.

It follows that if $\cat M$ is an injective model category, and $\K=(K,\Delta, \ve)$ is a comonad on $\cat M$ such that $K$ preserves monomorphisms, then every element of $U_{\K}^{-1}(\mathsf{Cof})$ is a monomorphism of $\K$-coalgebras, since $U_{\K}:\cat M_{\K}\to \cat M$ is faithful for all comonads $\K$.
\end{rmk}

To construct one type of Postnikov factorization in the category of coalgebras over a comonad $\K$, we make inductive arguments based on the following sort of compatibility between $\K$ and extra structure on the model category on which it acts. Moreover, in  order for condition (a) of Theorem \ref{thm:postnikov} to hold for the cofree $\K$-coalgebra adjunction,  certain towers  should satisfy a Mittag-Leffler-type condition.

\begin{defn}\label{defn:inductive} A  comonad $\K$ on a model category $\cat M$ that has a Postnikov presentation $(\mathsf X, \mathsf Z)$ and filtered weak equivalences \emph{allows inductive arguments}  if (K0) and the axioms below hold, where $\mathsf W_n=U_\K^{-1}(\W_n)$ and $\C=U_{\K}^{-1}(\mathsf{Cof})$.
\begin{description}
\item [(K5)] There is some $k$ such that the composition of any tower of countable length in $\mathsf{Post}_{F_{\K} \mathsf X}\cap \mathsf W_n$ is in $\mathsf W_{n-k}$, for all $n\geq k-1$.
\item [(K6)] For all $n\geq -1$ and for all $i: (C,\gamma)\to (D, \delta)\in \C \cap \mathsf{W}_{n}$, the induced morphism 
$$\big((i''u)^\sharp,i\big):   (C,\gamma) \to F_{\K}Q \times _{F_{K}P}(D,\delta)$$
is in $\mathsf W_{n+1}$, where 
$$\xymatrix{C\ar [d]_{i}\ar[r]^{u}&e\ar[d]^{i'}\\ D\ar [r]&P}$$
is a pushout in $\cat M$, and
$$\xymatrix{e\;\ar [rr]^{i'}\ar [dr]^{i''}_{\sim}&& P\\&Q\ar [ur]^q}$$
is a factorization with $i''\in \mathsf {Cof}\cap \W$ and $q\in \mathsf {Post}_{\mathsf X}$.
\end{description}
\end{defn}

\begin{rmk}\label{rmk:K5'} Axiom (K5) can sometimes be replaced by an axiom that should be easier to check. Let $\K$ be a comonad on $\cat M$ such that inverse limits and pullbacks in $\cat M_{\K}$ are created in $\cat M$ and such that the following axiom holds.
\begin{description}
\item [(K5')]   There is some $k$ such that the composition of any tower of morphisms in $\mathsf{Post}_{K(\mathsf{X})}\cap \W_{n}$ is  in $\W_{n-k}$ for all $n\geq k-1$.
\end{description}
Then $\K$ satisfies axiom (K5), since the fact that pullbacks and inverse limits of $\K$-coalgebras are created in $\cat M$ implies that $U_{\K}(\mathsf{Post}_{F_{\K}(\mathsf{X})}\cap \mathsf {W}_{n})\subseteq \mathsf{Post}_{K(\mathsf{X})}\cap \W_{n}$.
\end{rmk}

In last two sections of this paper we consider examples of tractable comonads that allow inductive arguments; we will treat further examples in \cite{hess-shipley2}.

Our goal in this section is to apply Theorem \ref{thm:postnikov} to proving the following existence result.

\begin{thm}\label{thm:bigthm}  Let $\cat M$ be a model category with filtered weak equivalences and  Postnikov presentation $(\mathsf X, \mathsf Z)$ such that $\mathsf Z\subseteq \mathsf {Post}_{\mathsf X}$.

If $\K$ is a tractable comonad on $\cat M$ that allows inductive arguments, then the category of $\K$-coalgebras, $\cat M_{\K}$, admits a model category structure such that
$$\mathsf {Cof}_{\cat M_{\K}}=U_{\K}^{-1}(\mathsf {Cof}) ,\quad \mathsf {WE}_{\cat M_{\K}}=U_{\K}^{-1}(\mathsf {WE})\quad \text{ and }\quad \mathsf {Fib}_{\cat M_{\K}}=\widehat {\mathsf {Post}_{F_{\K}\mathsf X}}.$$
\end{thm}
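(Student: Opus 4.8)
The plan is to verify the hypotheses of the right-to-left transfer theorem (Theorem~\ref{thm:postnikov}) for the cofree adjunction $U_{\K}\colon \cat M_{\K}\adjunct{}{}\cat M\colon F_{\K}$ of Remark~\ref{rmk:K-adjunct}, taking $L=U_{\K}$ and $R=F_{\K}$. For these choices $\mathsf W=L^{-1}(\mathsf{WE})=U_{\K}^{-1}(\mathsf{WE})$ and $\mathsf C=L^{-1}(\mathsf{Cof})=U_{\K}^{-1}(\mathsf{Cof})$ are exactly the proposed weak equivalences and cofibrations, and the fibrations produced by the theorem are $\widehat{\mathsf{Post}}_{R(\mathsf X)}=\widehat{\mathsf{Post}_{F_{\K}\mathsf X}}$, so Theorem~\ref{thm:postnikov} delivers precisely the asserted structure. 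First I would record two standing facts. The category $\cat M_{\K}$ is bicomplete, since colimits of $\K$-coalgebras are created by $U_{\K}$ and completeness is axiom (K0). And, because $F_{\K}$ is a right adjoint it preserves all limits and pullbacks, so it carries $\mathsf X$- and $\mathsf Z$-Postnikov towers to $F_{\K}\mathsf X$- and $F_{\K}\mathsf Z$-Postnikov towers; combined with the hypothesis $\mathsf Z\subseteq\mathsf{Post}_{\mathsf X}$ and the fact (Remark~\ref{rmk:post-closure}) that $\mathsf{Post}_{F_{\K}\mathsf X}$ is closed under pullback and composition, this gives $F_{\K}(\mathsf{Post}_{\mathsf Z})\subseteq\mathsf{Post}_{F_{\K}\mathsf Z}\subseteq\mathsf{Post}_{F_{\K}\mathsf X}$. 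It then remains to check conditions (a), (b), and (c) of Theorem~\ref{thm:postnikov}.

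For condition (a), that $\mathsf{Post}_{F_{\K}\mathsf Z}\subseteq\mathsf W$, I would argue stagewise. Each stage $Y_{\beta+1}\to Y_{\beta}$ of such a tower is a pullback of $F_{\K}q$ for some $q\in\mathsf Z$, and axiom (K4) says exactly that $U_{\K}$ carries such a pullback into $\W$; hence each stage lies in $\mathsf W_{n}=U_{\K}^{-1}(\W_{n})$ for every $n$. By the preservation fact above each stage also lies in $\mathsf{Post}_{F_{\K}\mathsf X}$, so axiom (K5) bounds the composite of a countable tower: since the stages lie in $\mathsf W_{n}$ for all $n$, the composite lies in $\mathsf W_{n-k}$ for all $n$, and so $U_{\K}$ of it lies in $\W_{m}$ for every $m$, whence in $\W$ by the last clause of (WE1). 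Thus the composite lies in $\mathsf W$.

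Condition (b) I would obtain by a single cofree pullback. Given $f\colon(C,\gamma)\to(D,\delta)$, factor $U_{\K}f$ in $\cat M$ by part (a) of the Postnikov presentation as $C\xrightarrow{a}W\xrightarrow{b}D$ with $a\in\mathsf{Cof}$ and $b\in\mathsf{Post}_{\mathsf Z}$, and form in $\cat M_{\K}$ the pullback $(E,\epsilon):=(D,\delta)\times_{F_{\K}D}F_{\K}W$ of $F_{\K}b$ along the canonical coalgebra map $(D,\delta)\to F_{\K}D$ transposing $\delta$. Its projection $p\colon(E,\epsilon)\to(D,\delta)$ is a pullback of $F_{\K}b\in\mathsf{Post}_{F_{\K}\mathsf Z}$, hence lies in $\mathsf{Post}_{F_{\K}\mathsf Z}$. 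The coalgebra map $\widetilde a=F_{\K}a\circ\eta_{(C,\gamma)}\colon(C,\gamma)\to F_{\K}W$ and $f$ agree over $F_{\K}D$ precisely because $f$ is a morphism of coalgebras, so they induce $i\colon(C,\gamma)\to(E,\epsilon)$ with $pi=f$. That $i\in\mathsf C$ follows from (K1) and (K2), which give $U_{\K}\widetilde a=Ka\circ\gamma\in\mathsf{Cof}$ and hence $\widetilde a\in\mathsf C$, together with (K3) and the equalizer description of the pullback among $\K$-coalgebras (in the injective case this step is immediate from Lemma~\ref{lem:mono-mono}).

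The heart of the argument is condition (c): factoring an arbitrary $f$ as $j\in\mathsf C\cap\mathsf W$ followed by $q\in\mathsf{Post}_{F_{\K}\mathsf X}$. I would build a countable tower by induction on the filtration degree. The base case is the factorization of (b), giving $(C,\gamma)\xrightarrow{j_{-1}}W_{-1}\xrightarrow{q_{-1}}(D,\delta)$ with $j_{-1}\in\mathsf C=\mathsf C\cap\mathsf W_{-1}$ and $q_{-1}\in\mathsf{Post}_{F_{\K}\mathsf Z}\subseteq\mathsf{Post}_{F_{\K}\mathsf X}$. Given $j_{n}\in\mathsf C\cap\mathsf W_{n}$, I apply axiom (K6) with $i=j_{n}$: performing the pushout-then-$(\mathsf{Cof}\cap\W,\,\mathsf{Post}_{\mathsf X})$-factorization in $\cat M$ and pulling back the resulting cofree coalgebras yields $W_{n+1}=F_{\K}Q\times_{F_{\K}P}W_{n}$, where $W_{n+1}\to W_{n}$ is a pullback of $F_{\K}q$ with $q\in\mathsf{Post}_{\mathsf X}$, hence lies in $\mathsf{Post}_{F_{\K}\mathsf X}$, and where the induced $j_{n+1}\colon(C,\gamma)\to W_{n+1}$ is an $(n+1)$-equivalence by (K6) and again lies in $\mathsf C$ by (K1)--(K3). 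Setting $W=\lim_{n}W_{n}$, the map $q\colon W\to(D,\delta)$ is the composite of the tower and so lies in $\mathsf{Post}_{F_{\K}\mathsf X}$, while $j=\lim_{n}j_{n}\colon(C,\gamma)\to W$ is a cofibration with $qj=f$. The hard part — and the reason axioms (K5), (K6), and (WE1) must be bundled together — is showing that this $j$ is a genuine weak equivalence rather than merely an $n$-equivalence at each finite stage. The inductive clause of (WE1) forces each transition $W_{n+1}\to W_{n}$ to be an $n$-equivalence, so the projection $W\to W_{m}$ is the composite of a countable tower all of whose stages lie in $\mathsf{Post}_{F_{\K}\mathsf X}\cap\mathsf W_{m}$; axiom (K5) then places this projection in $\mathsf W_{m-k}$. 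Since $W\to W_{m}$ composed with $j$ equals $j_{m}\in\mathsf W_{m}$, the ``two out of three''-type clauses of (WE1) place $j$ in $\mathsf W_{m-k-1}$ for every $m$, hence in $\mathsf W$. Controlling this inverse-limit passage is exactly the convergence step that distinguishes the coalgebra setting from its dual, and it is where I expect the real work to lie; once (a), (b), and (c) are in hand, Theorem~\ref{thm:postnikov} yields the model structure with cofibrations $U_{\K}^{-1}(\mathsf{Cof})$, weak equivalences $U_{\K}^{-1}(\mathsf{WE})$, and fibrations $\widehat{\mathsf{Post}_{F_{\K}\mathsf X}}$.
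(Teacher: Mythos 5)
Your overall strategy---verifying conditions (a), (b), (c) of Theorem \ref{thm:postnikov} for the adjunction $U_{\K}\dashv F_{\K}$---is exactly the paper's, and your treatments of condition (a) and of the weak-equivalence control at the limit stage (using (WE1) and (K5) to promote the stagewise $n$-equivalences to a genuine weak equivalence) match the paper's argument. But there is a genuine gap, and it occurs at the step you repeatedly dispatch with ``lies in $\mathsf C$ by (K1)--(K3)'': axiom (K3) only asserts that a map $(i,g)\colon (C,\gamma)\to F_{\K}X\times (D,\delta)$ into a \emph{product} is a cofibration (and only when the component $i$ into the cofree factor is already in $\mathsf C$); it says nothing about maps induced into \emph{pullbacks} $F_{\K}Q\times_{F_{\K}P}(D,\delta)$. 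In your condition (b) this makes the cofibrancy of the map into $(D,\delta)\times_{F_{\K}D}F_{\K}W$ unjustified outside the injective case (left cancellation of cofibrations holds for monomorphisms but not in general), which is precisely why the paper's Proposition \ref{prop:1st-factor} factors the map $C\to e$ and uses the product $F_{\K}Q\times(D,\delta)$ instead. The failure is more serious in your condition (c): axiom (K6) delivers only that $j_{n+1}\colon (C,\gamma)\to F_{\K}Q\times_{F_{\K}P}W_{n}$ is an $(n+1)$-equivalence, \emph{not} that it is a cofibration---indeed its cofree component is the transpose of a map factoring through the terminal object, so even (K3) applied to the ambient product gives nothing. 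Since (K6) can only be applied to morphisms in $\mathsf C\cap \mathsf W_{n}$, your induction cannot even restart at the next stage: without cofibrancy of $j_{n+1}$ the hypothesis of (K6) fails and the tower construction collapses.

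The paper's proof is structured exactly to repair this: at each inductive step it applies Proposition \ref{prop:1st-factor} to the (K6)-map $j_{n+1}$, writing $j_{n+1}=q_{n+1}i_{n+1}$ with $i_{n+1}\in\mathsf C$ (via the product construction, which (K3) does cover) and $q_{n+1}\in\mathsf{Post}_{F_{\K}\mathsf Z}$; Proposition \ref{prop:cond-a} makes $U_{\K}q_{n+1}$ a weak equivalence, and (WE1) then transfers the $(n+1)$-equivalence property from $j_{n+1}$ to $i_{n+1}$, so the induction proceeds with $i_{n+1}\in\mathsf C\cap\mathsf W_{n+1}$ and tower maps $p_{n+1}=r_{n+1}q_{n+1}\in\mathsf{Post}_{F_{\K}\mathsf X}$. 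The same issue recurs at your limit stage: even if every $j_{n}$ were a cofibration, $j=\lim_{n}j_{n}$ need not be one, since inverse limits of cofibrations are not cofibrations in general. The paper addresses this explicitly: in the injective case monomorphy of $U_{\K}j$ follows from that of any single $U_{\K}j_{n}$, and in general one applies Proposition \ref{prop:1st-factor} once more to $j$ and uses two-out-of-three for $\W$ to see that the resulting cofibration is still a weak equivalence. So your proposal identifies the right skeleton but omits the interleaved first-factorization steps that are the actual mechanism making the induction and the limit stage legitimate.
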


\begin{rmk} If $(\mathsf X, \mathsf Z)=(\mathsf{Fib}, \mathsf{Fib}\cap \W)$, the generic Postnikov presentation, then it is trivially true that $\mathsf Z\subseteq \mathsf {Post}_{\mathsf X}$.
\end{rmk}

We begin the proof of Theorem \ref{thm:bigthm} with the relatively simple observation that condition (a) of Theorem \ref{thm:postnikov} is satisfied under the hypotheses above.

\begin{prop}\label{prop:cond-a} Let $\cat M$ be a model category with filtered equivalences and a Postnikov presentation $(\mathsf X, \mathsf Z)$ such that $\mathsf Z\subseteq \mathsf{Post}_{\mathsf X}$.  If $\K$ is a comonad on $\cat M$ such that axioms (K0), (K4) and (K5) hold,  then $U_{\K}(\mathsf{Post}_{F_{\K}\mathsf Z})\subseteq \W$.
\end{prop}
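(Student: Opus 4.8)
The plan is to show that any morphism $p \in \mathsf{Post}_{F_{\K}\mathsf Z}$ satisfies $U_{\K}(p) \in \W$, by combining the Mittag-Leffler-type axiom (K5) with a reduction from $\mathsf Z$ to $\mathsf X$. Since $\mathsf{Post}_{F_{\K}\mathsf Z}$ consists of compositions of towers built from pullbacks of maps in $F_{\K}\mathsf Z$, and since $U_{\K}(\W) = \W$ is closed under the relevant operations by (WE1), it suffices to control the underlying towers in $\cat M$. First I would observe that axiom (K4) gives the key local statement: for each generating pullback square in a Postnikov tower, built from a map $q \in \mathsf Z$ via $F_{\K}q$, the underlying morphism in $\cat M$ of the pullback projection $(D,\delta)\times_{F_{\K}B}F_{\K}E \to (D,\delta)$ lies in $\W$. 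Thus each single stage $Y_{\beta+1}\to Y_{\beta}$ of a tower in $\mathsf{Post}_{F_{\K}\mathsf Z}$ has underlying map in $\W$, hence in every $\W_n$.

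Next I would promote this stagewise control to the composition of the whole tower. The hypothesis $\mathsf Z \subseteq \mathsf{Post}_{\mathsf X}$ is what lets (K5) apply: a pullback of $F_{\K}q$ with $q\in \mathsf Z \subseteq \mathsf{Post}_{\mathsf X}$ is itself (up to the coalgebra structure) a map in $\mathsf{Post}_{F_{\K}\mathsf X}$, because $F_{\K}$ and the formation of pullbacks commute suitably and $\mathsf{Post}_{F_{\K}\mathsf X}$ is closed under pullbacks and composition of towers (Remark~\ref{rmk:post-closure}). Therefore any tower in $\mathsf{Post}_{F_{\K}\mathsf Z}$ is, after this reinterpretation, a tower in $\mathsf{Post}_{F_{\K}\mathsf X}$; and by the previous paragraph each stage lands in $\mathsf W_n$ for every $n$, so each stage lies in $\mathsf{Post}_{F_{\K}\mathsf X}\cap \mathsf W_n$. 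Axiom (K5) then asserts that the composite of such a countable tower lies in $\mathsf W_{n-k}$ for every $n \geq k-1$, and since $n$ is arbitrary the composite lies in $\bigcap_n \W_{n'}$, which by the last clause of (WE1) means it lies in $\W$.

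The main obstacle I anticipate is the bookkeeping of lengths and the identification $\mathsf{Post}_{F_{\K}\mathsf Z}\subseteq \mathsf{Post}_{F_{\K}\mathsf X}$ at the level of towers. Postnikov towers in the definition are indexed by an arbitrary ordinal $\lambda$, whereas (K5) only governs towers of \emph{countable} length, so I would need to argue that controlling each stage together with the behavior at limit ordinals (where $Y_{\gamma} = \lim_{\beta<\gamma}Y_{\beta}$ is created in $\cat M$ by (K0) and Remark~\ref{rmk:invlimcoalg}) reduces the general case to countable subtowers, or else that the structure of $\mathsf Z \subseteq \mathsf{Post}_{\mathsf X}$ already forces the towers in question to be of the allowable type. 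Concretely, the delicate point is verifying that expanding each $F_{\K}\mathsf Z$-stage into its constituent $F_{\K}\mathsf X$-stages does not disturb the pullback structure and keeps every new stage in $\mathsf W_n$; once that is in place, (K5) and (WE1) finish the argument cleanly.
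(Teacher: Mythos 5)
Your proposal is correct and takes essentially the same route as the paper's proof: the inclusion $\mathsf{Post}_{F_{\K}\mathsf Z}\subseteq \mathsf{Post}_{F_{\K}\mathsf X}$ (obtained from $\mathsf Z\subseteq \mathsf{Post}_{\mathsf X}$, the fact that $F_{\K}$ preserves limits, and Remark~\ref{rmk:post-closure}), then axiom (K4) to see that every stage of a tower in $\mathsf{Post}_{F_{\K}\mathsf Z}$ is a weak equivalence and hence lies in $\mathsf W_{n}$ for all $n$, and finally (K5) together with (WE1) to pass to the composite. The countable-length concern you raise in your last paragraph is not addressed in the paper's proof either---it applies (K5) to ``any tower'' without comment---so that anticipated obstacle reflects a subtlety of the original argument rather than a defect specific to your approach.
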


\begin{proof} 
Since $\mathsf Z\subseteq \mathsf{Post}_{\mathsf X}$,
$$F_{\K}\mathsf Z\subseteq F_{\K}{\mathsf{Post}_{\mathsf X}} \subseteq {\mathsf{Post}_{F_{\K}\mathsf X}}$$ 
because $F_{\K}$ commutes with limits.   As ${\mathsf{Post}_{F_{\K}\mathsf X}}$ is closed under pullbacks and composition of towers (Remark \ref{rmk:post-closure}), it follows that ${\mathsf{Post}_{F_{\K}\mathsf Z}}\subseteq {\mathsf{Post}_{F_{\K}\mathsf X}}$. On the other hand, axiom (K4) implies that any morphism in ${\mathsf{Post}_{F_{\K}\mathsf Z}}$ is the composition of a tower of weak equivalences.  By axiom (K5), the composition of any tower in $\mathsf{Post}_{F_{\K} \mathsf X}\cap \mathsf W$ is in $\mathsf W$, and therefore $U_{\K}(\mathsf{Post}_{F_{\K}\mathsf Z})\subseteq \W$.
\end{proof}

In the next two subsections we prove that conditions (b) and (c) of Theorem \ref{thm:postnikov} hold as well under the hypotheses of Theorem \ref{thm:bigthm}, thus completing its proof.

\subsection{The first Postnikov factorization}

In the following proposition, which generalizes Lemma 1.15 in \cite{hess:hhg}, we provide conditions under which the adjunction $U_{\K}:\cat M_{\K}\adjunct {}{}\cat M:F_{\K}$ satisfies condition (b) of Theorem \ref{thm:postnikov}.  

Throughout this section, $\C =U_{\K}^{-1}(\mathsf {Cof})$ and $\mathsf W =U_{\K}^{-1}(\mathsf {WE})$.

\begin{prop}\label{prop:1st-factor}  Let $\cat M$ be a model category, and let $\mathsf Z$ be a subset of $\mathsf{Fib}\cap \mathsf {WE}$ such that every morphism $f$ in $\cat M$ admits a factorization $f=qj$, where $q\in \mathsf{Post}_{\mathsf Z}$ and $j\in \mathsf {Cof}$.

If  $\K$ is a comonad on $\cat M$ satisfying axioms (K0) through (K3),
then every morphism $f$ in $\cat M_{\K}$ admits a factorization $f=qj$, where $q\in \mathsf{Post}_{F_{\K}\mathsf Z}$ and $j\in \mathsf {C}$.

\end{prop}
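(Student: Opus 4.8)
The plan is to lift the given factorization of the underlying morphism across the cofree adjunction $U_{\K}\dashv F_{\K}$, exploiting that $F_{\K}$, being a right adjoint, preserves all limits (so it carries Postnikov towers upstairs) and that axioms (K1)--(K3) control cofibrations into cofree objects and products. First I would apply the hypothesis to $U_{\K}f\colon C\to D$ in $\cat M$, obtaining a factorization $U_{\K}f=q_{0}j_{0}$ with $j_{0}\in\mathsf{Cof}$ and $q_{0}\colon W\to D$ in $\mathsf{Post}_{\mathsf Z}$, where $q_{0}$ is the composition of a tower $W_{\bullet}\to W_{0}=D$ whose successive maps are pullbacks of morphisms in $\mathsf Z$. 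Applying $F_{\K}$ to this tower produces a tower $F_{\K}W_{\bullet}\to F_{\K}D$ of pullbacks of morphisms in $F_{\K}\mathsf Z$, whose composition $F_{\K}q_{0}$ lies in $\mathsf{Post}_{F_{\K}\mathsf Z}$.

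Using completeness of $\cat M_{\K}$ (axiom (K0)), I would then form the pullback $(Y,\upsilon):=(D,\delta)\times_{F_{\K}D}F_{\K}W$ in $\cat M_{\K}$ of $F_{\K}q_{0}$ along the canonical morphism $(\id_{D})^{\sharp}\colon(D,\delta)\to F_{\K}D$ (the adjunct of $\id_{D}$, with underlying map $\delta$), and set $\tilde q:=p_{1}\colon(Y,\upsilon)\to(D,\delta)$. Since $\mathsf{Post}_{F_{\K}\mathsf Z}$ is closed under pullbacks (Remark \ref{rmk:post-closure}), $\tilde q\in\mathsf{Post}_{F_{\K}\mathsf Z}$. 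To produce the lift, I would verify that $F_{\K}q_{0}\circ j_{0}^{\sharp}=(\id_{D})^{\sharp}\circ f$: after applying $U_{\K}$ both sides reduce to $K(U_{\K}f)\circ\gamma$, the left by functoriality and the right because $f$ is a coalgebra morphism. Hence the pair $(f,j_{0}^{\sharp})$ factors through the pullback, giving $\tilde j\colon(C,\gamma)\to(Y,\upsilon)$ with $\tilde q\tilde j=f$.

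It remains to show $\tilde j\in\C$, which is the crux. The morphism $j_{0}^{\sharp}\colon(C,\gamma)\to F_{\K}W$ lies in $\C$, since its underlying map $U_{\K}(j_{0}^{\sharp})=Kj_{0}\circ\gamma$ is a composite of the cofibration $Kj_{0}$ (by (K2)) with the cofibration $\gamma$ (by (K1)). Axiom (K3), applied with $i=j_{0}^{\sharp}$ and $g=f$, then shows that the induced morphism into the product $F_{\K}W\times(D,\delta)$ lies in $\C$. Because $(Y,\upsilon)$ is the subobject of this product cut out as the equalizer of the two morphisms to $F_{\K}D$, our $\tilde j$ is precisely the corestriction of $(j_{0}^{\sharp},f)$ along the canonical inclusion $\iota\colon(Y,\upsilon)\hookrightarrow F_{\K}W\times(D,\delta)$; that is, $\iota\tilde j=(j_{0}^{\sharp},f)\in\C$.

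The hard part will be descending the cofibration property from $\iota\tilde j$ to its factor $\tilde j$. When $\cat M$ is injective this is immediate: $U_{\K}(\iota\tilde j)$ is then a monomorphism, and it factors as $U_{\K}\iota\circ U_{\K}\tilde j$, forcing the right-hand factor $U_{\K}\tilde j$ to be a monomorphism and hence a cofibration. In general, however, $U_{\K}$ does \emph{not} preserve the pullback defining $(Y,\upsilon)$, so one cannot identify $U_{\K}\tilde j$ with $j_{0}$; instead I would try to exhibit $U_{\K}\tilde j$ as a retract of the cofibration $U_{\K}(\iota\tilde j)$. The raw material is the counit: $U_{\K}((\id_{D})^{\sharp})=\delta$ is split by $\ve_{D}$, and the counit laws give both $\ve_{W}\circ U_{\K}(p_{2})\circ U_{\K}\tilde j=j_{0}$ and $q_{0}\circ(\ve_{W}\circ U_{\K}p_{2})=U_{\K}p_{1}$. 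Manufacturing an honest retraction from these data — equivalently, showing that $U_{\K}\iota$ is a split monomorphism — is the main obstacle, and it is exactly the place where injectivity of $\cat M$, or a preservation property of $U_{\K}$, makes the argument go through cleanly.
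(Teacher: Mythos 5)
Your construction diverges from the paper's at the very first step, and that divergence is precisely what produces the gap you acknowledge at the end. You factor $U_{\K}f$ itself and then pull $F_{\K}q_{0}$ back along $(\id_{D})^{\sharp}\colon(D,\delta)\to F_{\K}D$; the comparison map $\tilde j$ then lands in a genuine pullback, which sits inside the product $F_{\K}W\times(D,\delta)$ as an equalizer. Axiom (K3) only gives you $\iota\tilde j=(j_{0}^{\sharp},f)\in\C$; to conclude $\tilde j\in\C$ you would need cofibrations to be left-cancellable along monomorphisms (that is, $gh\in\mathsf{Cof}$ and $g$ a monomorphism imply $h\in\mathsf{Cof}$), which holds in injective model structures but fails in general model categories --- and the proposition assumes only (K0)--(K3), not injectivity. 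The retraction you hope to manufacture from the counit does not exist in general: $\ve_{W}\circ U_{\K}p_{2}$ splits the map into $KW$, but there is no retraction of $U_{\K}\iota$ compatible with both projections, since $U_{\K}$ need not preserve the pullback defining $(Y,\upsilon)$. So as written your argument proves the proposition only under the extra hypothesis that $\cat M$ is injective.

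The paper sidesteps this with one change of input: it factors the unique map $C\to e$ to the \emph{terminal object}, not $U_{\K}f$, as $C\xrightarrow{j'}Q\xrightarrow{q'}e$ with $j'\in\mathsf{Cof}$ and $q'\in\mathsf{Post}_{\mathsf Z}$. The transpose $(j')^{\sharp}$ factors as $F_{\K}j'\circ\gamma$, hence lies in $\C$ by (K1) and (K2) --- exactly your argument for $j_{0}^{\sharp}$ --- and (K3) applied to $i=(j')^{\sharp}$, $g=f$ puts $j:=\big((j')^{\sharp},f\big)\colon(C,\gamma)\to F_{\K}Q\times(D,\delta)$ in $\C$. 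Because the base of the factorization is terminal, the relevant ``pullback'' over $(D,\delta)$ is just the product: the projection $q\colon F_{\K}Q\times(D,\delta)\to(D,\delta)$ is the pullback of $F_{\K}q'\colon F_{\K}Q\to F_{\K}e$ along the unique map $(D,\delta)\to F_{\K}e$, hence lies in $\mathsf{Post}_{F_{\K}\mathsf Z}$, and $f=qj$ on the nose. No corestriction into an equalizer ever occurs, so no cancellation property of cofibrations is needed; this is exactly why (K3) is stated for products rather than for general pullbacks. If you replace your first step by a factorization of the terminal map, the rest of your reasoning goes through verbatim and recovers the paper's proof.
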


\begin{proof} Let $e$ denote a terminal object in $\cat M$.  Observe that since $F_{\K}$ is a right adjoint, $F_{\K}e$ is a terminal object in $\cat M_{\K}$.

Let $f:(C,\gamma)\to (D, \delta)$ be a morphism of $\K$-coalgebras.  Let 
$$\xymatrix{U_{\K}(C,\gamma)=C\ar[dr]_{j'}\ar [rr]^(0.6){!}&&e\\ &Q\ar [ur]_{q'}}$$
be a factorization of the unique map in $\cat M$ from $C$ to $e$ with $j'\in \mathsf{Cof}$ and $q'\in \mathsf{Post}_{\mathsf Z}$, the existence of which is guaranteed by the hypothesis on $\mathsf Z$.

Taking transposes, we obtain a commuting diagram
$$\xymatrix{(C,\gamma)\ar[dr]_{(j')^{\#}}\ar [rr]^{!}&&F_{\K}e.\\ &F_{\K}Q\ar [ur]_{F_{\K}q'}}$$
Since $F_{\K}$ is a right adjoint and therefore preserves pullbacks and inverse limits,
 $$F_{\K} (\mathsf{Post}_{\mathsf Z})\subseteq \mathsf{Post}_{F_{\K}\mathsf Z},$$
 whence $F_{\K}q'\in \mathsf{Post}_{F_{\K}\mathsf Z}$.  Moreover, the morphism $(j')^{\#}$ admits a factorization
 $$\xymatrix{(C,\gamma)\ar [dr]_{\gamma}\ar [rr]^{(j')^{\#}}&& F_{\K}Q\\&F_{\K}C \ar[ur]_{F_{\K}j'}}$$
 where $\gamma\in \mathsf C$ by (K1) and $F_{\K}j'\in \mathsf C$ by (K2), whence $(j')^{\#}\in \mathsf C$.
 
Axiom (K3) now implies that 
 $$j:=\big( (j')^{\#}, f\big): (C,\gamma) \to F_{\K}Q \times (D,\delta)$$
 is in $\mathsf C$, where the existence of the product $F_{\K}Q \times (D,\delta)$ is guaranteed by (K0).  Furthermore, the projection map
 $$q: F_{\K}Q \times (D,\delta) \to (D, \delta)$$
 is in $\mathsf{Post}_{F_{\K}\mathsf Z}$, as it is the pullback over the unique morphism $(D,\delta)\to F_{\K}e$ of $F_{\K}q'$, and $\mathsf{Post}_{F_{\K}\mathsf Z}$ is closed under pullbacks.  Since $f=qj$, the proof is complete.
\end{proof}

\begin{cor} Under the hypotheses of Theorem \ref{thm:bigthm}, condition (b) of Theorem \ref{thm:postnikov} is satisfied.
\end{cor}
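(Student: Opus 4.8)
The plan is to observe that the Corollary is essentially a direct application of Proposition \ref{prop:1st-factor}, so the only real work is to confirm that the hypotheses of Theorem \ref{thm:bigthm} specialize to the hypotheses of that Proposition and that its conclusion is literally condition (b).

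First I would unwind condition (b) of Theorem \ref{thm:postnikov} for the cofree $\K$-coalgebra adjunction $U_{\K}:\cat M_{\K}\adjunct{}{}\cat M:F_{\K}$, where by Remark \ref{rmk:K-adjunct} the forgetful functor $U_{\K}$ is the left adjoint. Taking $L=U_{\K}$ and $R=F_{\K}$, we have $\mathsf C=U_{\K}^{-1}(\mathsf{Cof})$ and $R(\mathsf Z)=F_{\K}\mathsf Z$, so condition (b) demands that every morphism $f$ in $\cat M_{\K}$ factor as $f=pi$ with $i\in U_{\K}^{-1}(\mathsf{Cof})$ and $p\in\mathsf{Post}_{F_{\K}\mathsf Z}$. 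This is verbatim the conclusion of Proposition \ref{prop:1st-factor} (with $q$ renamed $p$ and $j$ renamed $i$).

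Next I would verify the two families of hypotheses of Proposition \ref{prop:1st-factor}. For the model-categorical input, Theorem \ref{thm:bigthm} assumes that $(\mathsf X,\mathsf Z)$ is a Postnikov presentation of $\cat M$; by part (a) of Definition \ref{defn:postnikov}, every morphism of $\cat M$ then factors as a cofibration followed by a morphism in $\mathsf{Post}_{\mathsf Z}$, which is exactly the factorization required. I would also note that $\mathsf Z\subseteq\mathsf{Fib}\cap\mathsf{WE}$: each morphism of $\mathsf Z$ is a length-one $\mathsf Z$-Postnikov tower, so $\mathsf Z\subseteq\mathsf{Post}_{\mathsf Z}\subseteq\widehat{\mathsf{Post}_{\mathsf Z}}=\mathsf{Fib}\cap\mathsf{WE}$, the last equality being part of the definition of a Postnikov presentation. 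For the comonad input, Proposition \ref{prop:1st-factor} requires only axioms (K0) through (K3), and these are part of the definition of a tractable comonad (Definition \ref{defn:tractable}), so they hold because $\K$ is tractable by hypothesis.

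With both sets of hypotheses confirmed, Proposition \ref{prop:1st-factor} applies and yields precisely the factorization demanded by condition (b). Since this reduction is entirely formal, I do not anticipate any genuine obstacle; the only point requiring care is the bookkeeping that matches the abstract adjunction of Theorem \ref{thm:postnikov} with the concrete cofree adjunction and identifies $F_{\K}\mathsf Z$ as the relevant class $R(\mathsf Z)$. In particular, no use is made here of axioms (K4), (K5), or (K6)---those are reserved for conditions (a) and (c)---so tractability alone suffices for this step.
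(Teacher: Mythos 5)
Your proposal is correct and is exactly the paper's (implicit) argument: the corollary is stated immediately after Proposition \ref{prop:1st-factor} precisely because it is a direct specialization of it, with $L=U_{\K}$, $R=F_{\K}$, $R(\mathsf Z)=F_{\K}\mathsf Z$, the factorization hypothesis supplied by part (a) of Definition \ref{defn:postnikov}, and axioms (K0)--(K3) supplied by tractability. Your additional check that $\mathsf Z\subseteq\mathsf{Post}_{\mathsf Z}\subseteq\widehat{\mathsf{Post}_{\mathsf Z}}=\mathsf{Fib}\cap\mathsf{WE}$ is the right way to see that the side condition of the proposition also holds.
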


\subsection{The second Postnikov factorization}

We give an inductive proof of condition (c) in Theorem \ref{thm:postnikov} for the category of coalgebras over a comonad. Our proof, which generalizes that of Lemmas 2.13 and 2.14 in \cite{hess:hhg}, can be viewed as dualizing the usual construction of semi-free models of dg-modules over a dg-algebra by recursive attachment of generators, killing all the extra homology in degree $n$ at the $n^{\text{th}}$-stage of the  process.  In essence, to construct a factorization of a morphism of $\K$-coalgebras as a trivial cofibration followed by a fibration, we recursively ``twist in cogenerators'' to kill homotopy of increasingly higher degree, where ``degree'' should be interpreted with respect to a given filtration of weak equivalences. 

Throughout this section, $\C =U_{\K}^{-1}(\mathsf {Cof})$ and $\mathsf W =U_{\K}^{-1}(\mathsf {WE})$.

\begin{prop}  Under the hypotheses of Theorem \ref{thm:bigthm}, 
every morphism of $\K$-coalgebras $f:(C,\gamma)\to (D, \delta)$ admits a factorization $f=pi$, where $i\in \C\cap \mathsf {W}$ and $p\in \mathsf {Post}_{F_{\K}\mathsf X}$.
\end{prop}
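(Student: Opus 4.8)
The plan is to construct the factorization $f = pi$ by building a Postnikov tower of cofree coalgebras over $(D,\delta)$ via an induction on the filtration degree $n$, mimicking the recursive attachment of cogenerators familiar from semifree models. The strategy dualizes the usual small object argument: at each stage we will have a $\K$-coalgebra map $i_n : (C,\gamma) \to W_n$ that is an $n$-equivalence (i.e.\ lies in $\C \cap \mathsf{W}_n$) together with a map $W_n \to (D,\delta)$ in $\mathsf{Post}_{F_\K \mathsf X}$ factoring $f$, and we will produce $W_{n+1}$ improving the connectivity of $i_{n+1}$ to an $(n{+}1)$-equivalence. Setting $W_{-1} = (D,\delta)$ and $i_{-1} = f$ starts the induction, since every morphism is a $(-1)$-equivalence by the convention $\W_{-1} = \mor\cat M$.

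The inductive step is exactly what axiom (K6) is designed to feed. Starting from $i_n : (C,\gamma) \to W_n$ in $\C \cap \mathsf{W}_n$, I would form in $\cat M$ the pushout of the underlying map $U_\K i_n$ along the unique map $U_\K(C,\gamma) \to e$, obtaining a diagram
$$\xymatrix{U_\K(C,\gamma)\ar [d]\ar[r]&e\ar[d]\\ U_\K W_n\ar [r]&P,}$$
then factor the right-hand leg $e \to P$ as a trivial cofibration $i'' \in \mathsf{Cof}\cap\W$ followed by a $q \in \mathsf{Post}_{\mathsf X}$ using the Postnikov presentation of $\cat M$. Transposing across the $U_\K \dashv F_\K$ adjunction and forming the pullback $F_\K Q \times_{F_\K P} W_n$ in $\cat M_\K$ (which exists by completeness (K0)), axiom (K6) guarantees precisely that the induced map
$$i_{n+1} := \big((i''u)^\sharp, i_n\big) : (C,\gamma) \to F_\K Q \times_{F_\K P} W_n =: W_{n+1}$$
lies in $\C \cap \mathsf{W}_{n+1}$. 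The projection $W_{n+1} \to W_n$ is a pullback of $F_\K q$ with $q \in \mathsf{Post}_{\mathsf X}$, so since $F_\K$ preserves limits it lies in $\mathsf{Post}_{F_\K \mathsf X}$; composing with $W_n \to (D,\delta)$ and using that $\mathsf{Post}_{F_\K \mathsf X}$ is closed under composition of towers (Remark~\ref{rmk:post-closure}) keeps the structural leg in $\mathsf{Post}_{F_\K\mathsf X}$ at every finite stage. I must also verify $i_{n+1}$ still sits in $\C$ and that it refines $i_n$, so that the maps assemble into a genuine tower $W = (W_n)$ indexed by $\lambda^{op}$ for a suitable ordinal $\lambda$.

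To finish I would pass to the limit. Let $W_\infty = \lim_n W_n$ with $i_\infty : (C,\gamma) \to W_\infty$ the induced map and $p : W_\infty \to (D,\delta)$ the limit of the structural legs; then $p \in \mathsf{Post}_{F_\K \mathsf X}$ by definition of a Postnikov tower, and $f = p i_\infty$. The map $i_\infty$ is an $n$-equivalence for every $n$ (it factors a refinement of $i_{n+1}$, which is in $\mathsf{W}_{n+1} \subseteq \mathsf{W}_n$), so by the clause of axiom (WE1) asserting that a morphism lying in $\W_n$ for all $n$ lies in $\W$, we conclude $i_\infty \in \mathsf{W}$; a separate check using closure of $\C$ under the relevant limits shows $i_\infty \in \C$, so $i_\infty \in \C \cap \mathsf{W}$ as required. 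I expect the main obstacle to be controlling connectivity \emph{in the limit}: each $i_n$ is only an $n$-equivalence, not a weak equivalence, so I must argue carefully that $i_\infty$ genuinely inherits membership in $\W_n$ for all $n$ from the tower despite the connectivity being achieved only asymptotically. This is where axiom (K5) (or (K5$'$)), governing how $n$-equivalences behave under composition of countable towers in $\mathsf{Post}_{F_\K\mathsf X}$, must be invoked to bridge the finite stages to the limit, and where the bookkeeping of the filtration degrees in (WE1) is most delicate.
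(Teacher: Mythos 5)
Your skeleton---induct on the filtration degree, feed axiom (K6) at each stage, pass to the limit of the resulting countable tower, and control connectivity there via (K5) and (WE1)---is indeed the paper's strategy. But the proposal has a genuine gap that recurs at three places: you never account for the fact that membership in $\C$ is not free, and (K6) does not provide it. First, your base case $i_{-1}=f$ is inadmissible: the induction hypothesis (and the hypothesis of (K6)) is that $i_n\in\C\cap\mathsf W_n$, and an arbitrary coalgebra morphism $f$ has no reason to have underlying map a cofibration. The paper instead starts by genuinely factoring $f$, setting $i_{-1}=\big((j')^{\#},f\big):(C,\gamma)\to F_{\K}Q\times(D,\delta)$ followed by the projection, where $j'$ comes from a $(\mathsf{Cof}\cap\W,\mathsf{Post}_{\mathsf X})$-factorization of $C\to e$, and (K1)--(K3) put $i_{-1}$ in $\C$. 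Second, in the inductive step you assert that (K6) puts $\big((i''u)^\sharp,i_n\big)$ in $\C\cap\mathsf W_{n+1}$; the axiom only asserts membership in $\mathsf W_{n+1}$. There is no axiom making a map into a \emph{pullback} (as opposed to a product, which is what (K3) handles) a cofibration. The paper repairs this by applying Proposition \ref{prop:1st-factor} to $j_{n+1}$, writing $j_{n+1}=q_{n+1}i_{n+1}$ with $i_{n+1}\in\C$ and $q_{n+1}\in\mathsf{Post}_{F_{\K}\mathsf Z}$; then Proposition \ref{prop:cond-a} makes $U_{\K}q_{n+1}$ a weak equivalence, (WE1) transfers membership in $\W_{n+1}$ from $j_{n+1}$ to $i_{n+1}$, and $\mathsf Z\subseteq\mathsf{Post}_{\mathsf X}$ keeps $p_{n+1}=r_{n+1}q_{n+1}$ in $\mathsf{Post}_{F_{\K}\mathsf X}$.

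Third, and most seriously, your claim that ``closure of $\C$ under the relevant limits'' yields $i_\infty\in\C$ is false: cofibrations are closed under pushouts and transfinite (direct) compositions, not under inverse limits, and $\C=U_{\K}^{-1}(\mathsf{Cof})$ inherits no such closure. The paper splits this point in two. If the model structure on $\cat M$ is injective, then $U_{\K}i_\infty$ is a monomorphism because $p_{\infty,n}\circ i_\infty=i_n$ is one, so $i_\infty\in\C$ and the factorization $f=p_\infty i_\infty$ already works. In general, one applies Proposition \ref{prop:1st-factor} once more, to $i_\infty$ itself, obtaining $i_\infty=q\circ i$ with $i\in\C$ and $q\in\mathsf{Post}_{F_{\K}\mathsf Z}\subseteq\mathsf{Post}_{F_{\K}\mathsf X}$; Proposition \ref{prop:cond-a} and two-out-of-three then give $U_{\K}i\in\W$, and the fibration leg of the final factorization is $p_\infty\circ q$, not $p_\infty$. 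Your treatment of connectivity in the limit is right in spirit but left as a worry rather than carried out: the actual bookkeeping is that (WE1) gives $U_{\K}p_n\in\W_{n-1}$, axiom (K5) then bounds the partial compositions $U_{\K}p_{\infty,n}\in\W_{n-k-1}$ for $n\geq k$, and (WE1) applied to $p_{\infty,n}\circ i_\infty=i_n$ yields $U_{\K}i_\infty\in\W_{n-k-2}$ for all $n\geq k$, hence $U_{\K}i_\infty\in\W$.
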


\begin{proof} We first establish the base of the induction: $f$ admits a factorization $p_{-1}i_{-1}$, where $i_{-1}\in U_{\K}^{-1}(\mathsf{Cof}\cap \W_{-1})$ and $p_{-1}\in \mathsf {Post}_{F_{\K}\mathsf X}$.  Recall that $\W_{-1}=\mor \cat M$, whence $\mathsf{Cof}\cap \W_{{-1}}=\mathsf{Cof}$.

Let 
$$\xymatrix{U_{\K}(C,\gamma)=C\ar[dr]_{j'}\ar [rr]^(0.6){!}&&e\\ &Q\ar [ur]_{q'}}$$
be a factorization of the unique map in $\cat M$ from $C$ to $e$ with $j'\in \mathsf{Cof}\cap \W$ and $q'\in \mathsf{Post}_{\mathsf X}$, the existence of which follows from the hypothesis that $(\mathsf X, \mathsf Z)$ is a Postnikov presentation of $\cat M$.

Taking transposes, we obtain a commuting diagram
$$\xymatrix{(C,\gamma)\ar[dr]_{(j')^{\#}}\ar [rr]^{!}&&F_{\K}e.\\ &F_{\K}Q\ar [ur]_{F_{\K}q'}}$$
Since $F_{\K}$ is a right adjoint and therefore preserves pullbacks and inverse limits,
 $$F_{\K} (\mathsf{Post}_{\mathsf X})\subseteq \mathsf{Post}_{F_{\K}\mathsf X},$$
 whence $F_{\K}q'\in \mathsf{Post}_{F_{\K}\mathsf X}$.  Moreover, just as in the proof of Proposition \ref{prop:1st-factor}, axioms (K1) and (K2) imply that $(j')^{\#}\in \C$, whence, by axiom (K3),
 $$i_{-1}:=\big( (j')^{\#}, f\big): (C,\gamma) \to F_{\K}Q \times (D,\delta)$$
 is  in $\mathsf C$ as well; the product in the target exists by (K0). Also as in the proof of Proposition \ref{prop:1st-factor},  the projection map
 $$p_{-1}: F_{\K}Q \times (D,\delta) \to (D, \delta)$$
 is in $\mathsf{Post}_{F_{\K}\mathsf X}$.  
 
 We now establish the inductive step of our proof: if $$i_{n}:(C,\gamma)\to (C_{n}, \gamma_{n})\in U_{\K}^{-1}(\mathsf{Cof}\cap \W_{n})$$ for some $n\geq -1$, then there exist 
 $$i_{n+1}\in (C,\gamma)\to (C_{n+1},\gamma_{n+1})\quad\text{ and}\quad p_{n+1}: (C_{n+1},\gamma_{n+1}) \to (C_{n}, \gamma_{n})$$ 
 such that $i_{n+1}\in U_{\K}^{-1}(\mathsf{Cof}\cap \W_{n+1})$, $p_{n+1}\in \mathsf{Post}_{F_{\K}\mathsf X}$ and $i_{n}=p_{n+1}i_{n+1}$.    Applying axioms (K0) and (K6) to $i_{n}$, we obtain an $(n+1)$-equivalence
 $$j_{n+1}: (C,\gamma) \to F_{\K}Q_{n}\times_{F_{\K}P_{n}} (C_{n},\gamma_{n}),$$
 where $P_{n}$ is the cofiber of $U_{\K}i_{n}$ (which represents what we want to ``kill'', at least in filtration $n$, by ``twisting in cogenerators''), and $Q_{n}$ is an acyclic ``based path object'' over $P_{n}$. Axiom (K6) tells us essentially that twisting the cofree coalgebra on the ``cogenerator object'' $Q_{n}$ together with $(C_{n}, \gamma_{n})$ over the cofree coalgebra on $P_{n}$ ``kills the homotopy of the cofiber in filtration $n$.''    

Since $\mathsf{Post}_{F_{\K}\mathsf X}$ is closed under pullbacks, the projection 
$$r_{n+1}:F_{\K}Q_{n}\times_{F_{\K}P_{n}} (C_{n},\gamma_{n})\to (C_{n},\gamma_{n})$$ 
is in $\mathsf{Post}_{F_{\K}\mathsf X}$.  We then apply Proposition  \ref{prop:1st-factor} to write $j_{n+1} = q_{n+1}i_{n+1}$ 
with $i_{n+1} : (C, \gamma) \to (C_{n+1}, \gamma_{n+1})$ such that $i_{n+1} \in \C$, and $q_{n+1} \in \mathsf{Post}_{F_{\K}\mathsf Z}$. 
Here $U_{\K}^{-1}(q_{n+1})$ is a weak equivalence by Proposition \ref{prop:cond-a} and $U_{\K}^{-1}(j_{n+1})$ is in $\W_{n+1}$, so $U_{\K}^{-1}(i_{n+1})$ is in $\W_{n+1}$ by (WE1).    Thus, as required,  $i_{n+1} \in U_{\K}^{-1}(\mathsf{Cof}\cap \W_{n+1})$.  Since $\mathsf{Post}_{F_{\K}\mathsf Z} \subseteq \mathsf{Post}_{F_{\K}\mathsf X}$, the composition $r_{n+1}q_{n+1}=p_{n+1}$ is in $\mathsf{Post}_{F_{\K}\mathsf X}$ as required.

We know thus that there exists a commuting diagram in $\cat M_{\K}$
$$\xymatrix{(C,\gamma)\ar [dd]_{f}\ar[ddr]_{i_{-1}}\ar[ddrr]_{i_{0}}\ar[ddrrrr]_{i_{n}}\\
\\
(D,\delta)&(C_{-1},\gamma_{-1})\ar @{->>}[l]^{p_{-1}}&(C_{0},\gamma_{0})\ar @{->>}[l]^{p_{0}}&\cdots\ar @{->>}[l]^{p_{1}}&(C_{n},\gamma_{n})\ar @{->>}[l]^{p_{n}}&\cdots \ar @{->>}[l]^{p_{n+1}},}$$
where $U_{\K}i_{n}\in \W_{n}$ for all $n \geq -1$.  By axiom (WE1) it follows that $U_{\K}p_{n}\in \W_{n-1}$ for all $n\geq 0$. Axiom (K5) then implies that there is some $k$ such that the partial composition of the tower 
$$p_{\infty,n}: \lim_{m} (C_{m},\gamma_{m})\to C_{n}$$ 
satisfies $U_{\K}p_{\infty,n}\in \W_{n-k-1}$ for all $n\geq k$.  

Let $$i_{\infty}=\lim _{m}i_{m}: (C,\gamma) \to \lim_{m} (C_{m},\gamma_{m}).$$  
Since $p_{\infty,n}\circ i_{\infty}= i_{n}$ for all $n$, axiom (WE1) implies that $U_{\K}i_{\infty}=\W_{n-k-2}$ for all $n\geq k$, from which it follows that 
$$U_{\K}i_{\infty}\in \W.$$  
Moreover  the composition 
$$p_{\infty}:\lim_{m}(C_m,\gamma_{m})\to (D,\delta)$$
of the entire tower is in $\mathsf{Post}_{F_{\K}X}$, since $p_{n}\in \mathsf{Post}_{F_{\K}X}$ for all $n$, and $\mathsf{Post}_{F_{\K}X}$ is closed under inverse limits. Finally
$$f=p_{\infty}i_{\infty},$$
as $f=p_{n}i_{n}$ for all $n$.  

If the model category structure on $\cat M$ is injective, then the factorization $f=p_{\infty}i_{\infty}$ is of the desired form.  Indeed, to conclude that $U_{\K}i_{\infty}$ is a monomorphism in $\cat M$, it suffices to know that at least one of the morphisms $i_{n}:C\to C_{n}$ is a monomorphism.  The proof is therefore complete in this case.

More generally, we can apply Proposition \ref{prop:1st-factor} to $i_{\infty}$, obtaining a factorization 
$$\xymatrix{(C,\gamma)\; \ar @{>->}[r]^{i}_{}& (C',\gamma')\ar @{->>}[r]^(0.4){q}& \lim_{n}(C_{n},\gamma_{n})}$$ 
of $i_{\infty}$, where $i\in \C$ and $q\in \mathsf {Post}_{F_{\K}\mathsf Z}$.  Since $\mathsf Z\subseteq \mathsf{Post}_{\mathsf X}$ by hypothesis, $q\in \mathsf {Post}_{F_{\K}\mathsf X}$, while Proposition \ref{prop:cond-a} implies that $U_{\K}q\in \W$.  It follows then from ``two-out-of-three'' for $\W$ that $U_{\K}i\in \W$ as well.  The desired factorization of $f$ is therefore
$$\xymatrix{(C,\gamma)\; \ar @{>->}[r]^{i}_{\sim}& (C',\gamma')\ar @{->>}[rr]^{p_{\infty}\circ q}&& (D,\delta)}.$$
 \end{proof}
 
 \subsection{Proving axiom (K6)}
 
Experience with explicit examples has shown that to prove that axiom (K6) holds for a certain comonad $\K$ on $\cat M$,  it is often easier to break the problem into two parts: proving two extra axioms about $n$-equivalences and a certain ``stability'' (or ``Blakers-Massey'') axiom hold in $\cat M$, then showing that a stronger version of (K4) holds, which implies (K6).  
 
\begin {prop}\label{prop:K6prime} Let $\cat M$ be a model category with Postnikov presentation $(\mathsf X, \mathsf Z)$ such that $\mathsf Z \subseteq \mathsf {Post}_{\mathsf X}$ and with filtered weak equivalences such that the following axioms hold.
\begin{description}
\item[(WE2)] If $$\xymatrix{ X\;\ar [d]\ar@{>->}[r]^{i}& Y\ar [d]\\Z\;\ar@{>->}[r]^j&P}$$
is a pushout diagram in $\cat M$ where $i\in \mathsf{Cof} \cap \W_{n}$, then $j\in \mathsf{Cof} \cap \W_{n}$ as well.
\item[(WE3)] 
For each $n$ there is a class of {\em special $n$-equivalences}, $ \widetilde \W_n$, such that
$$\W_{n+1}\subseteq \widetilde \W_{n} \subseteq \W_{n}$$
and if $f,g$ are composable, $gf\in \W_n$, and $g\in \widetilde\W_n$, then $f\in \W_n.$

\item [(S)] Given a commuting diagram in $\cat M$ 
$$\xymatrix{A\ar [dd]_{i}^{\sim_{n}}\ar [rrr]\ar@{..>}[dr]^k&&&\;B\ar @{>->}[dl]^\sim_{j}\ar [dd]\\
		&\bullet\ar[r]\ar@{->>}[dl]&\bullet \ar @{->>}[dr]_p\\
		C\ar [rrr]&&&P}$$
in which the outer square is a pushout, the lower inscribed square is a pullback, $i\in \mathsf {Cof}\cap \W_{n}$, $j\in \mathsf {Cof}\cap \W$, and $p\in \mathsf {Post}_{\mathsf X}$, the induced map $k$ from $A$ into the pullback is an $(n+1)$-equivalence.
\end{description}  
A comonad $\K$ on $\cat M$ satisfies axioms (K4) and  (K6)  if it satisfies the following condition.
\begin{description}
\item [(K4')] For every $n\geq -1$ and every pullback diagram in $\cat M$
$$\xymatrix{E\times _{B}D \ar @{->>}[d]\ar [r]& E\ar @{->>}[d]^{\sim_{n}}_{p}\\ D=U_{\K}(D,\delta) \ar [r]^(0.65)f& B}$$
where $p\in \mathsf{Post}_{\mathsf X}\cap \W_{n}$, the induced morphism
$$U_{\K}\big(F_{\K}E\times_{F_{\K}B} (D,\delta)\big) \to E\times _{B}D$$
is a special $n+1$-equivalence (i.e., in $\widetilde\W_{n+1}$).
\end{description}
\end{prop}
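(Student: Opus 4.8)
The plan is to deduce (K4) and (K6) separately from (K4'), treating (K4) as a degenerate case and reserving the real work for (K6). For (K4), let $q\colon E\to B$ lie in $\mathsf Z$ and let $f\colon(D,\delta)\to F_{\K}B$ be a coalgebra map with transpose $f^\flat\colon D\to B$. Since $\mathsf Z\subseteq\mathsf{Post}_{\mathsf X}$ and $\mathsf Z\subseteq\widehat{\mathsf{Post}_{\mathsf Z}}=\mathsf{Fib}\cap\W$, the map $q$ is a trivial fibration lying in $\mathsf{Post}_{\mathsf X}\cap\W$, hence in $\mathsf{Post}_{\mathsf X}\cap\W_n$ for every $n$. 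Thus (K4') applies for each $n\geq -1$ and shows that the comparison $U_{\K}\big(F_{\K}E\times_{F_{\K}B}(D,\delta)\big)\to E\times_B D$ lies in $\widetilde\W_{n+1}\subseteq\W_{n+1}$ for all $n$, so by the clause of (WE1) asserting that a map in every $\W_n$ lies in $\W$, this comparison is a weak equivalence. Since $E\times_B D\to D$ is the pullback of the trivial fibration $q$ along $f^\flat$, it too is a weak equivalence, and the (K4)-morphism $U_{\K}\big((D,\delta)\times_{F_{\K}B}F_{\K}E\big)\to U_{\K}(D,\delta)$ is their composite, hence lies in $\W$.

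For (K6), fix $i\colon(C,\gamma)\to(D,\delta)$ in $\C\cap\mathsf W_n$ and form the pushout $P$, the factorization $e\xrightarrow{i''}Q\xrightarrow{q}P$ of $i'\colon e\to P$, and the pullback $F_{\K}Q\times_{F_{\K}P}(D,\delta)$ of the statement; the object to aim at is $Q\times_P D$, the pullback in $\cat M$ of $q$ along the pushout map $D\to P$. First I would verify the hypotheses needed to invoke (S) and (K4'). Since $U_{\K}i\in\mathsf{Cof}\cap\W_n$, axiom (WE2) shows that its pushout $i'$ lies in $\mathsf{Cof}\cap\W_n$; writing $i'=q\,i''$ with $i''\in\mathsf{Cof}\cap\W\subseteq\W_n$, the first clause of (WE1) forces $q\in\mathsf{Post}_{\mathsf X}\cap\W_n$.

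With $q$ so located, (K4') applied with $E=Q$, $B=P$, $p=q$ gives that the comparison $\phi\colon U_{\K}\big(F_{\K}Q\times_{F_{\K}P}(D,\delta)\big)\to Q\times_P D$ lies in $\widetilde\W_{n+1}$, while matching the data of (K6) to the diagram of (S) --- with $A=C$, $B=e$, outer pushout $P=e\amalg_C D$, trivial cofibration $i''\colon e\to Q$, and fibration $q\colon Q\to P$ --- shows that the induced map $k\colon C\to Q\times_P D$ is an $(n+1)$-equivalence. The key computation is then to identify $\phi\circ U_{\K}\big((i''u)^\sharp,i\big)$ with $k$, which I would check componentwise in $Q\times_P D$: both maps have $D$-component $U_{\K}i$, and the $Q$-component of the former is
$$\ve_Q\circ U_{\K}(i''u)^\sharp=\ve_Q\circ K(i''u)\circ\gamma=(i''u)\circ\ve_C\circ\gamma=i''u,$$
by naturality of $\ve$ and the counit law $\ve_C\circ\gamma=\id$, which is exactly the $Q$-component of $k$. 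Granting $\phi\circ U_{\K}\big((i''u)^\sharp,i\big)=k$, we have $k\in\W_{n+1}$ and $\phi\in\widetilde\W_{n+1}$, so (WE3) at level $n+1$ yields $U_{\K}\big((i''u)^\sharp,i\big)\in\W_{n+1}$, i.e.\ $\big((i''u)^\sharp,i\big)\in\mathsf W_{n+1}$, which is (K6).

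I expect the main obstacle to be exactly this last identification: correctly matching the many objects and maps of (K6) onto the pushout--pullback diagram of (S), and confirming that $\phi$ composed with the structure map $\big((i''u)^\sharp,i\big)$ reproduces the stability map $k$ rather than some other cone into $Q\times_P D$. Once that is pinned down, the remaining ingredients --- (WE2) and (WE1) to place $q$ in $\mathsf{Post}_{\mathsf X}\cap\W_n$, and the final cancellation via (WE3) --- are routine.
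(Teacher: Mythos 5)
Your proposal is correct and follows essentially the same route as the paper's proof: the same degenerate-case argument for (K4) (using that $\mathsf Z$ consists of trivial fibrations lying in $\mathsf{Post}_{\mathsf X}\cap\W_n$ for all $n$), and for (K6) the same chain of (WE2), the Postnikov factorization of $e\to P$, (WE1) to place $q$ in $\W_n$, axiom (S) for $k$, axiom (K4') for the comparison map into $Q\times_P D$, and (WE3) to cancel. The only difference is that you verify the factorization $k=\phi\circ U_{\K}\big((i''u)^\sharp,i\big)$ explicitly via the triangle identity $\ve_Q\circ K(i''u)\circ\gamma=i''u$, a detail the paper asserts without proof; this is a welcome elaboration, not a different argument.
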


\begin{rmk} The induced morphism in axiom (K4') is the one obtained by applying  the universal property of pullbacks to  the commuting diagram
$$\xymatrix{U_{\K}\big(F_{\K}E\times_{F_{\K}B} (D,\delta)\big)\ar @{.>} [dr]^{(\ve_{E},\id_{D})}\ar [dd]\ar [rr]&&U_{\K}F_{\K}E\ar '[d][dd]\ar [dr]^{\ve_{E}}\\
&E\times_{B}D\ar[dd]\ar[rr]&&E\ar[dd]^q\\
D\ar [dr]^{=}\ar'[r]^{U_{\K}f^\sharp}[rr]&&U_{\K}F_{\K}B\ar[dr]^{\ve_{B}}\\
&D\ar[rr]^{f}&&B,}$$
where $f^\sharp: (D,\delta) \to F_{\K}B$ is the transpose of $f$.
\end{rmk}

\begin{proof}
First we show that (K4') implies (K4).  If $p \in \mathsf Z$ then $p$ is a trivial fibration and hence so
is  the pullback map $E\times_{B} D \to D$.   Since $p \in \mathsf {Post}_{\mathsf X}  \cap \W_{n}$ for all $n$, (K4') implies that   
$U_{\K}\big(F_{\K}E\times_{F_{\K}B} (D,\delta)\big) \to E\times _{B}D$ is in $\W_{n+1}$ for all $n$
and hence is a weak equivalence.   The composition of these two maps is the weak equivalence required in (K4).

To see that (K4') implies (K6), consider
$$i: (C,\gamma)\to (D, \delta)\in U_{\K}^{-1}\big(\mathsf {Cof} \cap \W_{n})$$ 
for some $n\geq -1$.  Consider the pushout 
$$\xymatrix{C\ar [d]_{i}^{\sim_{n}}\ar[r]&e\ar[d]^{}\\ D\ar [r]^u&P}$$
in $\cat M$, where $e$ is a terminal object.  Axiom (WE2) implies that the map $e\to P$ is an $n$-equivalence.

Since $(\mathsf X, \mathsf Z)$ is a Postnikov presentation, there is a factorization
$$\xymatrix{e\;\ar [rr]^{\sim_{n}}\ar @{>->}[dr]^{j}_{\sim}&& P\\&Q\ar [ur]^q}$$
with $j \in \mathsf {Cof}\cap \W$ and $q\in \mathsf {Post}_{\mathsf X}$.  By axiom (WE1), $q\in \W_{n}$.

We can therefore apply axiom (S) to the diagram
$$\xymatrix{C\ar  [dd]_{i}^{\sim_{n}}\ar [rrr]\ar@{..>}[dr]^k&&&\;e\ar @{>->}[dl]^\sim_{j}\ar [dd]^{\sim_{n}}\\
		&Q\times_{P}D\ar[r]\ar@{->>}[dl]&Q \ar @{->>}[dr]_q\\
		D\ar [rrr]&&&P}$$
and conclude that the induced morphism $k:C\to Q\times_{P}D$ is an $(n+1)$-equivalence.  
Applying axiom (K4') to the pullback diagram
$$\xymatrix{Q\times _{P}D \ar @{->>}[d]\ar [r]& Q\ar @{->>}[d]^{q}_{\sim_{n}}\\ D=U_{\K}(D,\delta) \ar [r]^(0.65)u& P,}$$
we conclude that the natural morphism
$$U_{\K}\big(F_{\K}Q\times_{F_{\K}P}(D,\delta)\big) \to Q\times_{P}D$$
is in $\widetilde\W_{n+1}$.
On the other hand, $k:C \to Q\times _{P}D$ factors as
$$C= U_{\K}(C,\gamma) \to U_{\K}\big(F_{\K}Q \times _{F_{\K}P}(D,\delta)\big)\to Q\times_{P}D,$$
whence axiom (WE3) implies that $ U_{\K}(C,\gamma) \to U_{\K}\big(F_{\K}Q \times _{F_{\K}P}(D,\delta)\big)$ is an $(n+1)$-equivalence as required.  
\end{proof}

\section{Homotopy theory of comodules over corings}
As in section \ref{subsec:dga}, let $R$ be a commutative ring, and let $\ch$ denote the category of nonnegatively graded chain complexes of $R$-modules, endowed with its usual graded tensor product.  Let $A$ be a differential graded (dg) algebra, and $V$ an $A$-coring, i.e., a comonoid in the category of $A$-bimodules.  We then let $\ma$ and $\mav$ denote the categories of right $A$-modules and of right $V$-comodules in the category of right $A$-modules, respectively.

In this section we apply Theorem \ref{thm:bigthm} to establishing the existence of a model category structure on $\mav$ that is left-induced from the ICM structure on $\ma$ (Proposition \ref{prop:moda}), under reasonable conditions on $V$.  We then construct in the next section explicit fibrant replacement functors in $\mav$, under further conditions on $R$, $A$ and $V$.  We end this section with concrete examples of dg $R$-algebras and corings to which our results apply.  

\begin{rmk} The model category structure on $\mav$ studied here plays an important role in establishing an interesting relationship among the notions of Grothendieck descent, Hopf-Galois extensions and Koszul duality \cite{berglund-hess}.
\end{rmk}

\subsection{Existence of the model category structure}

The goal of this section is to prove the following theorem, which generalizes Theorem 2.10 in \cite{hess:hhg}.
\begin{thm}\label{thm:mav} Let $R$ be a semihereditary commutative ring and $A$ an augmented dg $R$-algebra such that $H_{1}A=0$. If $V$ is an $A$-coring that is semifree as a left $A$-module on a generating graded $R$-module $X$ such that 
\begin{enumerate}
\item $H_{0}(R\otimes _{A}V)=R$, $H_{1}(R\otimes _{A}V)=0$, and
\item $X_{n}$ is $R$-free and finitely generated for all $n\geq 0$,
\end{enumerate}
then the category $\mav$ admits a model category structure left-induced from the ICM structure on $\ma$ by the adjunction
$$\mav\adjunct{U}{-\otimes_{A}V} \ma,$$
where $U$ denotes the forgetful functor. 
\end{thm}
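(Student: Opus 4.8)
The plan is to deduce Theorem~\ref{thm:mav} directly from the main existence theorem, Theorem~\ref{thm:bigthm}, applied to $\cat M=\ma$ equipped with the ICM structure of Proposition~\ref{prop:moda} and to the comonad $\K=-\otimes_A V$, whose forgetful functor is $U$ and whose cofree functor $F_{\K}$ is $-\otimes_A V$. Concretely, I must (i) equip $\ma$ with filtered weak equivalences satisfying (WE1); (ii) fix a Postnikov presentation $(\mathsf X,\mathsf Z)$ of $\ma$ with $\mathsf Z\subseteq\mathsf{Post}_{\mathsf X}$; (iii) verify that $\K$ is tractable, i.e.\ that (K0)--(K4) hold; and (iv) verify that $\K$ allows inductive arguments, i.e.\ that (K5) and (K6) hold. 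Once these are in place, Theorem~\ref{thm:bigthm} produces exactly the left-induced structure of Definition~\ref{defn:induction}, since it identifies $\mathsf{Cof}_{\mav}=U^{-1}(\mathsf{Cof})$ and $\mathsf{WE}_{\mav}=U^{-1}(\mathsf{WE})$.

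Tractability is the easy part. Since $\ma$ is locally presentable (see the proof of Proposition~\ref{prop:moda}) it is well-powered, and its ICM structure is injective, so Lemma~\ref{lem:1st-factor} reduces axioms (K0)--(K3) to the single statement that $-\otimes_A V$ preserves monomorphisms. This holds because $V$ is semifree on $X$: forgetting the differential, $V\cong A\otimes_R X$ as a graded left $A$-module, so for any right $A$-module $M$ one has $M\otimes_A V\cong M\otimes_R X$ as graded $R$-modules. As each $X_n$ is $R$-free and hence flat, $-\otimes_R X$ carries degreewise injections to degreewise injections, and Lemma~\ref{lem:mono-mono} then yields (K2), whence (K0)--(K3). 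Axiom (K4) I will postpone, obtaining it at the end as a consequence of the stronger axiom (K4') via Proposition~\ref{prop:K6prime}.

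For (iv) I will introduce the connectivity filtration: declare $f$ to be an \emph{$n$-equivalence} if $H_i(f)$ is an isomorphism for $i<n$ and an epimorphism for $i=n$, so that $\W_{-1}=\mor\ma$, the intersection of all the $\W_n$ is the class of quasi-isomorphisms, and an object is $(n-1)$-connected precisely when its homology vanishes below degree $n$. Axiom (WE1) then follows from the long exact homology sequence by elementary diagram chases, and the special $n$-equivalences needed for (WE3) are taken to be the maps inducing an isomorphism on $H_i$ for all $i\le n$. Axiom (WE2) follows because a pushout of a monomorphism of complexes is again a monomorphism with isomorphic cofiber, and a cofibration is an $n$-equivalence exactly when that cofiber is $(n-1)$-connected. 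After fixing a Postnikov presentation of $\ma$ concrete enough to run the later arguments and for which $\mathsf Z\subseteq\mathsf{Post}_{\mathsf X}$, the remaining input about $\ma$ alone is the Blakers--Massey axiom (S); because chain complexes form a stable, additive setting, homotopy pushouts and homotopy pullbacks agree, and the comparison map $k$ of (S) can be controlled through a cofiber-sequence computation in which $R$ being semihereditary keeps the relevant subobjects flat, the gain of one degree of connectivity coming from the connecting homomorphism.

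The main obstacle will be axiom (K4'), which is where hypotheses (1) and (2) on $V$ are essential, together with the convergence axiom (K5'). Conditions (1) say that $R\otimes_A V$ is homologically simply connected, so the counit $\ve\colon V\to A$ exhibits $-\otimes_A V$ as a comonad whose deviation from the identity is concentrated in high degree; quantitatively, the map $\ve_E\colon E\otimes_A V\to E$ and the induced comparison $U\big((E\otimes_A V)\times_{(B\otimes_A V)}(D,\delta)\big)\to E\times_B D$ gain a degree of connectivity, which is exactly what is needed to land in $\widetilde\W_{n+1}$. Feeding this into Proposition~\ref{prop:K6prime} together with (WE2), (WE3) and (S) yields both (K4) and (K6). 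Finally, the same connectivity increase, now combined with the finite-type $R$-freeness of $X$ in condition (2) to guarantee that the relevant towers satisfy a Mittag--Leffler-type condition and that their inverse limits are computed degreewise, will give axiom (K5') with a uniform shift $k$, hence (K5) by Remark~\ref{rmk:K5'}. With all axioms verified, Theorem~\ref{thm:bigthm} applies and produces the asserted model structure on $\mav$. The delicate points throughout are the precise connectivity bookkeeping in (S) and (K4') and the convergence of the cobar-type towers, rather than any single hard construction.
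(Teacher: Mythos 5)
Your skeleton is indeed the paper's: the same connectivity filtration on $\ma$, Lemma \ref{lem:1st-factor} to reduce (K0)--(K3) to preservation of monomorphisms, Proposition \ref{prop:K6prime} to get (K4) and (K6) from (K4$'$), and Remark \ref{rmk:K5'} to get (K5) from (K5$'$). (One small choice you leave open is made explicitly in the paper: take the \emph{generic} Postnikov presentation $(\mathsf{Fib},\mathsf{Fib}\cap\W)$, so that $\mathsf Z\subseteq\mathsf{Post}_{\mathsf X}$ holds trivially.) But the technical heart of your plan has a genuine gap: your verification of (K4$'$) never invokes the hypothesis $H_{1}A=0$, and without it the conclusion is false as stated. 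Landing in $\widetilde\W_{n+1}$ means inducing an \emph{isomorphism} on $H_{n+1}$, and this cannot follow from simple connectivity of $R\otimes_{A}V$ alone. In the paper's proof (Lemma \ref{lem:k4-mav}) one filters $F\otimes_{A}V$ by degree in $X$, where $F$ is the $(n-1)$-connected fiber of $p$; the K\"unneth theorem (this is where semihereditariness of $R$ is used, not in axiom (S) as you suggest) identifies $E^{2}_{p,q}$, and the only obstruction to the isomorphism $H_{n+1}(F\otimes_{A}V)\cong H_{n+1}F$ is the differential $d^{2}_{2,n}:H_{n}F\otimes H_{2}(R\otimes_{A}V)\to H_{n+1}F$. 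That differential is given by the action of $H_{1}A$ on $H_{*}F$ (the differential of a generator $x\in X_{2}$ can have a summand in $A_{1}$), and it vanishes precisely because $H_{1}A=0$. If it did not vanish, the comparison map would fail to be special, and the (WE3) step that converts (K4$'$) into (K6) would collapse.

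A second, smaller but still real, omission: Remark \ref{rmk:K5'} only applies when pullbacks and inverse limits in $\mav$ are \emph{created} in $\ma$, and this is a lemma in its own right (Lemma \ref{lem:lim-mav}), not a formality. It is exactly where condition (2) enters: finite-type $R$-freeness of $X$ is used to produce a (noncanonical, degreewise) inverse showing that $-\otimes_{A}V$ commutes with arbitrary products, hence with all limits. Your phrase ``inverse limits are computed degreewise'' gestures at this but attaches the finite-type hypothesis to a Mittag--Leffler condition, which is not where it is needed---Mittag--Leffler comes for free from the surjectivity of fibrations in positive degrees (plus Lemma \ref{lem:0conn=conn} in degree $0$). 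The creation-of-limits lemma is also used inside the proof of (K4$'$) itself, to obtain the commuting diagram of short exact sequences comparing the two pullbacks. Finally, a point in your favor: your instinct about axiom (S) is right but can be strengthened---in $\ma$ the comparison map $k$ is a full quasi-isomorphism by an elementary diagram argument (Lemma \ref{lem:wws}), with no flatness or connecting-homomorphism analysis required.
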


\begin{rmk}  For any dg $R$-algebra $A$ and $A$-coring $V$ with comultiplication $\Delta$ and counit $\ve$, it is clear that $\mav=(\ma) _{\K_{V}}$, where  $\K_{V}$ denotes the comonad $(-\otimes_{A}V, -\otimes _{A}\Delta, -\otimes _{A}\ve)$ on $\ma$.  
\end{rmk}

\begin{rmk}  Recall that a commutative ring $R$ is \emph{semihereditary} if every finitely generated ideal of $R$ is projective \cite [Chapter 4]{rotman}.  Examples of  semihereditary rings include semisimple rings, PID's, rings of integers of algebraic number fields and valuation rings. The requirement that $R$ be semihereditary arises from a connectivity argument in the proof of Theorem \ref{thm:mav} for which it is important  that every submodule of a flat $R$-module be flat, which holds for semihereditary rings \cite[Theorem 9.25]{rotman}.  
\end{rmk}

In order to apply Theorem \ref{thm:bigthm} to proving Theorem \ref{thm:mav}, we need an appropriate notion of filtered weak equivalences in $\ma$.

\begin{defn}  For all $n\geq -1$, let $\W_{n}$ be the set of morphisms $f:M\to N$ of right $A$-modules such that $H_{k}f$ is an isomorphism for all $k< n$ and a surjection for $k=n$.  The elements of $\W_{n}$ are called \emph{$n$-equivalences}.  The special $n$-equivalences, $\widetilde \W_{n}$, required in (WE3) are the $n$-equivalences such that $H_{k}f$ is an isomorphism for $k=n$.
\end{defn}

The connectivity arguments we give below require the following elementary property of $n$-equivalences.

\begin{lem}\label{lem:0conn=conn} Let $n\geq 0$. If a chain map $f:Y\to Z$ is an $n$-equivalence and $f_{1}:Y_{1}\to Z_{1}$ is surjective, then $f_{0}:Y_{0}\to Z_{0}$ is surjective as well.
\end{lem}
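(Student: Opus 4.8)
The plan is to reduce the statement to the surjectivity of $H_0(f)$ together with a short diagram chase exploiting the chain-map relation $f_0\partial = \partial f_1$. First I would record that, because we work in $\ch$, there is nothing in negative degrees, so every element of degree $0$ is automatically a cycle and hence $H_0(Y) = \coker(\partial\colon Y_1 \to Y_0)$, and similarly for $Z$. The key preliminary observation is that the hypothesis $n\geq 0$ forces $H_0(f)$ to be surjective in \emph{all} cases: when $n\geq 1$ the map $H_0(f)$ is an isomorphism (since $0<n$), while when $n=0$ it is a surjection by the very definition of $\W_0$. Thus, regardless of the precise value of $n$, I may assume $H_0(f)$ is onto.

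Given this, the main step is a direct chase. Fix an arbitrary $z\in Z_0$ and consider its class $[z]\in H_0(Z)$. By surjectivity of $H_0(f)$ there is $y\in Y_0$ with $[f_0(y)]=[z]$, i.e.\ $f_0(y)-z\in \im(\partial\colon Z_1\to Z_0)$, so $f_0(y)-z = \partial w$ for some $w\in Z_1$. Now I invoke the second hypothesis: since $f_1\colon Y_1\to Z_1$ is surjective, I may choose $v\in Y_1$ with $f_1(v)=w$. Setting $y':= y-\partial v\in Y_0$ and using that $f$ is a chain map, so that $f_0\partial=\partial f_1$, I compute $f_0(y') = f_0(y)-\partial(f_1(v)) = f_0(y)-\partial w = z$. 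Hence $z\in\im f_0$, and as $z$ was arbitrary, $f_0$ is surjective.

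I do not anticipate a genuine obstacle here; the only point requiring care is the bookkeeping in the final computation, namely correctly using the chain-map identity so that the boundary correction $\partial v$ in degree $0$ of $Y$ maps to the correcting boundary $\partial w$ in $Z$. It is worth emphasizing that the role of the hypothesis ``$f_1$ surjective'' is precisely to make this correction available: the surjectivity of $H_0(f)$ alone only yields that $f_0$ is onto \emph{modulo boundaries}, and one can exhibit easy counterexamples (with $f_1$ not surjective) where $f$ is an $n$-equivalence yet $f_0$ fails to be surjective. Thus both hypotheses are genuinely used.
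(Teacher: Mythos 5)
Your proof is correct and is essentially the paper's argument: the paper organizes the same two facts (surjectivity of $H_{0}f$ from $n\geq 0$, and surjectivity of $f_{0}$ on boundaries from surjectivity of $f_{1}$) into a map of short exact sequences $0\to d(Y_{1})\to Y_{0}\to H_{0}(Y)\to 0$ and concludes that the middle map is surjective, which is exactly the element chase you carry out explicitly.
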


\begin{proof}    For any chain map $f:Y\to Z$, there is a commuting diagram of short exact sequences
$$\xymatrix{0\ar [r]&d(Y_{1})\ar[d]_{f_{0}} \ar [r] & Y_{0}\ar[d]_{f_{0}}\ar[r]& H_{0}(Y)\ar[d]_{H_{0}f}\ar [r]&0\\
			0\ar [r]&d(Z_{1})\ar [r] & Z_{0}\ar[r]& H_{0}(Z)\ar [r]&0,}$$
where $d$ denotes the differentials on both $Y$ and $Z$.  If $f:Y\to Z$ is an $n$-equivalence of chain complexes for some $n\geq 0$,  then $H_{0}f$ is at least a surjection.  On the other had, if $f_{1}$ is surjective, then the restriction of $f_{0}$ to $d(Y_{1})$ is surjective.  Thus, under the hypothesis of the lemma, both the righthand and the lefthand vertical morphisms in the diagram above are surjections, which implies that the middle morphism is as well.
\end{proof}

Theorem \ref{thm:mav} is a consequence of the sequence of lemmas below.

\begin{lem}\label{lem:wws} If $A$ is any dg $R$-algebra, then axioms (WE1), (WE2), (WE3), and (S) hold in $\ma$, endowed with its ICM structure, the generic Postnikov presentation $(\mathsf{Fib}, \mathsf {Fib} \cap \W)$ and the filtered weak equivalences defined above.
\end{lem}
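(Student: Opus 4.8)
The plan is to verify each of the four axioms (WE1), (WE2), (WE3), and (S) directly from the definitions, treating them as essentially separate routine verifications about $n$-equivalences of chain complexes of $R$-modules. Recall that $f\in \W_n$ means $H_kf$ is an isomorphism for $k<n$ and an epimorphism for $k=n$, while $f\in\widetilde\W_n$ additionally requires $H_nf$ to be an isomorphism. Since homology is the fundamental invariant here, every axiom should reduce to a diagram chase on homology groups, using the long exact sequences associated to the relevant pushouts and pullbacks.

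For (WE1), I would check the composition and cancellation properties one by one. Closure under composition and the clause that $f\in\W_n$ for all $n$ implies $f\in\W$ (a quasi-isomorphism) are immediate from the definitions, since $\W_n$ for large $n$ forces isomorphisms on all homology in a fixed degree. The three two-out-of-three-style clauses each follow from elementary facts about compositions of maps that are iso/epi in a given range: for instance, if $g,gf\in\W_n$ then $H_kf$ is forced to be iso for $k<n$ and epi for $k=n-1$, giving $f\in\W_{n-1}$ (the degree drop by one is exactly what the surjectivity-in-top-degree convention costs us). The clause involving $g\in\W$ is cleanest, since $H_kg$ is then iso in every degree. For (WE2), I would use that the ICM structure is injective so cofibrations are monomorphisms, and that a pushout of a monomorphism $i$ along any map has cokernel isomorphic to $\coker i$; the map $j$ is then a monomorphism with the same cokernel as $i$, so the five lemma applied to the long exact homology sequences of the two short exact sequences $0\to X\to Y\to \coker i\to 0$ and $0\to Z\to P\to \coker i\to 0$ transfers the $n$-equivalence property from $i$ to $j$.

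For (WE3), the inclusions $\W_{n+1}\subseteq\widetilde\W_n\subseteq\W_n$ are immediate from the definitions, and the cancellation clause ($gf\in\W_n$ and $g\in\widetilde\W_n$ imply $f\in\W_n$) is a direct homology computation: in degrees $k<n$ one cancels isomorphisms, and in degree $n$ one uses that $H_ng$ is an isomorphism (not merely an epimorphism) to deduce that $H_nf$ is an epimorphism from the fact that $H_n(gf)$ is. The axiom (S) is the one I expect to be the main obstacle, since it is genuinely a Blakers--Massey-type statement rather than a formal manipulation: given the pushout--pullback configuration with $i\in\mathsf{Cof}\cap\W_n$, $j\in\mathsf{Cof}\cap\W$, and $p\in\mathsf{Post}_{\mathsf X}\cap\W_n$, one must show the comparison map $k$ into the pullback is an $(n+1)$-equivalence. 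Here the connectivity hypotheses and the semihereditary assumption (ensuring submodules of flats are flat, so that the relevant short exact sequences of chain complexes behave well on homology) are what make the gain of one degree work. Concretely, I would analyze the map from the pushout to the target, identify the homology of the pullback via the Mayer--Vietoris/long exact sequence comparing the outer pushout square with the inner pullback square, and check degree by degree that $k$ induces isomorphisms below degree $n+1$ and a surjection in degree $n+1$; the delicate point is that the pushout and pullback agree through a range governed by the connectivities of $i$ and $p$, which is exactly the stability phenomenon that (S) is abstracting. I would isolate this as the substantive lemma and treat the other three axioms as short preliminary verifications.
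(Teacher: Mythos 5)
Your handling of (WE1) and (WE3) is correct and matches the paper, which dispatches them as immediate from the definitions. For (WE2) your core idea---a pushout of a monomorphism is a monomorphism with isomorphic cokernel---is exactly the paper's, but the five lemma as you invoke it does not apply: to compare the long exact sequences of $0\to X\to Y\to \coker i\to 0$ and $0\to Z\to P\to \coker i\to 0$ you would need information about the vertical maps $H_k(X\to Z)$ and $H_k(Y\to P)$, about which nothing is known. The correct finish, and the one the paper uses, is the characterization extracted from a \emph{single} long exact sequence: a monomorphism of $A$-modules lies in $\W_n$ if and only if its cokernel has vanishing homology through degree $n$, a property depending only on the cokernel, which is preserved under pushout.

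The genuine gap is (S). You correctly identify it as the one substantive axiom, but you then offer only a plan, and the plan rests on two misconceptions. First, you appeal to the semihereditary hypothesis on $R$; but this lemma is stated for an \emph{arbitrary} commutative ring $R$ and arbitrary dg $R$-algebra $A$---semihereditariness is a hypothesis of Theorem \ref{thm:mav} and is used only in the K\"unneth/spectral-sequence argument for (K4') in Lemma \ref{lem:k4-mav}, not here. Second, there is no ``range'' phenomenon to track: chain complexes form a stable, additive setting, and the paper proves a far stronger, exact statement with \emph{no} connectivity hypotheses at all. Namely, if $i$ is any monomorphism, $j$ is a monomorphism and quasi-isomorphism, and $p$ is a surjection, then the induced map $k:M\to P$ into the pullback is a quasi-isomorphism (hence in particular an $(n+1)$-equivalence whenever $i\in\W_n$). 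The proof is an element-level computation: $k$ is a monomorphism because $i$ is; writing the pullback as $P=\{(x',y)\in M'\times Q\mid f'(x')=p(y)\}$ and using the presentation of the pushout $N'$ as a quotient of $M'\oplus N$, one checks that the composite $P\to Q\to Q/N$ is surjective with kernel exactly $\im k$, so that $P/M\cong Q/N$, which is acyclic since $j$ is a monic quasi-isomorphism. Your Mayer--Vietoris comparison could in fact be pushed through to the same exact conclusion (the inner square yields a Mayer--Vietoris sequence provided one deals with surjectivity of $p$ in degree $0$, where ICM fibrations need not be surjective), but as written your proposal leaves the hard part unproved and points at hypotheses that are neither available in this lemma nor needed.
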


\begin{proof}  Axioms (WE1) and (WE3) follows easily from the definitions of $\W_{n}$ and $\widetilde\W_{n}$. To prove (WE2), observe that a monomorphism of $A$-modules is an $n$-equivalence if and only if its cokernel is $(n+1)$-connected.  Since cokernels are preserved under pushout, (WE2) holds.

We now prove that a particularly strong version of axiom (S) holds in $\ma$.  Consider a commuting diagram in $\ma$
$$\xymatrix{M\ar [dd]_{i}\ar [rrr]^{f}\ar@{..>}[dr]^k&&&\;N\ar [dl]^\sim_{j}\ar [dd]^{{i'}}\\
		&P\ar[r]^{f''}\ar[dl]&Q\ar [dr]_p\\
		M'\ar [rrr]^{f'}&&&N'}$$
in which the outer square is a pushout, the lower inscribed square is a pullback, $i$ is a monomorphism, $j$ is a monomorphism and a quasi-isomorphism, and $p$ is a surjection.  We show that the induced map $k$ from $M$ into the pullback $P$ is always a quasi-isomorphism. 

We remark first that $k:M\to P$ is a monomorphism, since $i$ is a monomorphism (cf.~proof of Lemma \ref{lem:1st-factor}).  Showing that $k$ is a quasi-isomorphism is therefore equivalent to proving that $P/M$ is acyclic. 

Let $q:Q \to Q/N$ denote the quotient map.  Since $j$ is a quasi-isomorphism, $Q/N$ is acyclic.  We prove that $qf'':P\to Q/N$ induces an isomorphism $P/M\cong Q/N$, implying that $P/M$ is acyclic, as desired. 

It is immediate that $\im k\subseteq \ker qf''$.   Writing 
$$P=\{(x',y)\in M'\times Q\mid f'(x')=p(y)\},$$
we see that if $(x',y)\in \ker qf''$, then there exists $z\in N$ such that $j(z)=y$.  Since $f'(x')=pj(z)=i'(z)$, and $N'$ is the pushout of $f$ and $i$, we conclude that there exists $x\in M$ such that $x'=i(x)$ and $z=f(x)$, whence $k(x)=(x', y)$, i.e., $\ker qf''\subseteq \im k$.  Thus $\ker qf''= \im k$, and so $qf''$ induces an isomorphism $P/M\cong Q/N$.
\end{proof}

\begin{lem}\label{lem:lim-mav}   Under the hypotheses of Theorem \ref{thm:mav}, all limits in $\mav$ are created in $\ma$.  
\end{lem}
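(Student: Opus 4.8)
The goal is to show that all limits in $\mav$ are created in $\ma$. My plan is to reduce the statement to two special cases via the standard reduction that any limit can be built from products and equalizers, and then to invoke the comonadic description of limits already developed in section 2. Recall from the earlier remark that $\mav = (\ma)_{\K_V}$ where $\K_V$ is the comonad $(-\otimes_A V,\, -\otimes_A\Delta,\, -\otimes_A\ve)$. Since colimits of coalgebras are always created by the forgetful functor, the only issue is limits, and by Proposition \ref{prop:barr-wells} it suffices to check that the underlying endofunctor $K = -\otimes_A V$ commutes with countable inverse limits; the construction in the proof of that proposition (and Remark \ref{rmk:invlimcoalg}) then shows both that $\mav$ is complete and that the forgetful functor $U$ creates those limits.

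The key computation is therefore the following: for a tower $\cdots \to M_{n+1}\to M_n \to \cdots \to M_0$ of right $A$-modules, I must exhibit a natural isomorphism
$$
\big(\lim_n M_n\big)\otimes_A V \;\xrightarrow{\ \cong\ }\; \lim_n\big(M_n\otimes_A V\big).
$$
This is where the hypotheses of Theorem \ref{thm:mav} enter, and it is the main obstacle: tensoring does not commute with inverse limits in general. The plan is to use that $V$ is \emph{semifree} as a left $A$-module on the generating graded $R$-module $X$ with $X_n$ finitely generated and $R$-free in each degree. Filtering $V$ by its semifree filtration reduces the claim, degreewise, to the statement that $-\otimes_A A\langle X\rangle$ behaves like a finite direct sum in each fixed homological degree: because $X$ is of finite type and $A$ is connective, in any fixed degree $k$ only finitely many basis elements of $V$ contribute, so $(-\otimes_A V)_k$ is, up to the $A$-action, a finite sum of shifted copies of the identity functor. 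Finite products (= finite direct sums) commute with all limits, so the degreewise isomorphism follows, and assembling over all $k$ gives the desired natural isomorphism of chain complexes.

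Concretely I would proceed in these steps: (i) identify $\mav$ with $(\ma)_{\K_V}$ and reduce, via Proposition \ref{prop:barr-wells}, to showing $K=-\otimes_A V$ preserves countable inverse limits; (ii) write out $\big(\lim_n M_n\big)\otimes_A V$ in a fixed homological degree $k$ as a limit of chain groups and use the finite-type, degreewise-free structure of $X$ to see that only finitely many generators of $V$ lie in degrees $\le k$, so that the relevant graded piece of the tensor product is a finite direct sum of copies of (shifted, reindexed) pieces of the $M_n$; (iii) invoke the fact that finite direct sums commute with the limit of the tower to produce the natural isomorphism degreewise, checking compatibility with the differential and the right $A$-action; (iv) conclude by Proposition \ref{prop:barr-wells} and Remark \ref{rmk:invlimcoalg} that $\mav$ is complete and that $U$ creates all limits, since a general limit is constructed from the products and equalizers handled by that machinery. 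The delicate point, and the step I expect to require the most care, is (ii)–(iii): verifying that the finiteness and freeness hypotheses genuinely force the tensor–limit interchange in each degree, rather than merely a comparison map, and that this holds \emph{naturally} so that it descends to the coalgebra level.
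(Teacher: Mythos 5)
There is a genuine gap here: your argument proves that $\mav$ is \emph{complete}, but not that limits are \emph{created} in $\ma$, and these are different statements. Proposition \ref{prop:barr-wells} constructs the equalizer of a pair of coalgebra maps as a coalgebra $(B,\beta)$ whose underlying object is the complicated limit $B=\lim_i B_i$ of the Barr--Wells tower, \emph{not} the equalizer $B_0$ of the underlying diagram in the base category; that is precisely why the construction there is nontrivial. Likewise, Remark \ref{rmk:invlimcoalg} only says that $U_{\K}$ creates limits of \emph{towers} when $K$ preserves them. So from ``$-\otimes_A V$ preserves countable inverse limits'' plus the Barr--Wells machinery you may conclude that all limits in $\mav$ exist, but not that the forgetful functor creates them --- and the creation statement is the actual content of the lemma, which the paper needs later (e.g.\ in Remark \ref{rmk:K5'}, and in the exact-sequence diagram in the proof of Lemma \ref{lem:k4-mav}, where $-\otimes_A V$ is applied across a pullback computed in $\ma$).

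What creation requires is that the endofunctor $K=-\otimes_A V$ itself preserve the limits in question: if $K$ preserves all limits of a given shape, then $U_{\K}$ creates limits of that shape. Accordingly, the correct decomposition is: (i) $-\otimes_A V$ preserves kernels, hence equalizers and pullbacks --- this is where semifreeness enters, since additively $M\otimes_A V\cong M\otimes X$ with $X$ degreewise $R$-free; and (ii) $-\otimes_A V$ preserves \emph{arbitrary} products --- this is where your degreewise finite-type computation belongs, comparing $\bigl(\prod_\alpha M_\alpha\bigr)\otimes X$ with $\prod_\alpha (M_\alpha\otimes X)$ in each degree $n$, where only the finitely many basis elements of $X$ in degrees $\leq n$ contribute, and exhibiting an explicit inverse to the canonical comparison map. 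Note also that preservation of countable inverse limits would not suffice in any case: ``all limits'' includes products over uncountable index sets, which are not built from towers. Your steps (ii)--(iii) are the right kind of argument, but they must be aimed at products (and combined with kernel preservation), with the conclusion drawn from the creation principle rather than from Proposition \ref{prop:barr-wells}.
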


\begin{proof} Since $V$ is $A$-semifree, the endofunctor $-\otimes _{A}V$ on $\ma$ preserves kernels and therefore pullbacks as well, as any pullback in $\ma$ can be calculated as a kernel. It follows that pullbacks in $\mav$ are created in $\ma$.

To conclude, we prove that arbitrary products in $\mav$ are also created in $\ma$.  For every $n\geq 0$, let $B_{n}=\{x_{n1},...,x_{nm_{n}}\}$ be an $R$-basis of $X_{n}$.  Let $\{M_{\alpha}\mid \alpha \in \mathcal J\}$ be any set of right $A$-modules.  The natural map
{\small{$$\big(\prod_{\alpha \in \mathcal J}M_{\alpha}\big)\otimes X\cong \big(\prod_{\alpha \in \mathcal J}M_{\alpha}\big)\otimes_{A}V\to \prod_{\alpha \in \mathcal J}(M_{\alpha}\otimes_{A}V)\cong \prod_{\alpha \in \mathcal J}(M_{\alpha}\otimes X): (y_{\alpha})_{\alpha}\otimes x\mapsto (y_{\alpha }\otimes x)_{\alpha}$$}}
admits a noncanonical inverse
$$\prod_{\alpha \in \mathcal J}(M_{\alpha}\otimes_{A}V)\to \big(\prod_{\alpha \in \mathcal J}M_{\alpha}\big)\otimes_{A}V$$
given in degree $n$ by
\begin{align*}
\prod _{\alpha\in \mathcal J}\bigoplus _{k=0}^{n}\bigoplus_{j=1}^{m_{n-k}}(M_{\alpha})_{k}\cdot x_{n-k, j}&\to\bigoplus _{k=0}^{n}\bigoplus_{j=1}^{m_{n-k}}\big( \prod _{\alpha\in \mathcal J}M_{\alpha}\big)_{k}\cdot x_{n-k, j}\\
\Big(\sum _{k=0}^{n}\sum_{j=1}^{m_{n-k}}y_{\alpha,k,j}\cdot x_{n-k,j}\Big)_{\alpha}&\mapsto \sum _{k=0}^{n}\sum_{j=1}^{m_{n-k}}(y_{\alpha,k,j})_{\alpha}\cdot x_{n-k,j}.
\end{align*}
The functor $-\otimes_{A}V$ therefore commutes with products, whence products in $\mav$ are created in $\ma$.
\end{proof}

\begin{lem}\label{lem:k4-mav} Under the hypotheses of Theorem \ref{thm:mav}, axiom (K4') holds for the comonad $\K_{V}$, with respect to its ICM structure, the generic Postnikov presentation  $(\mathsf{Fib}, \mathsf {Fib} \cap \W)$ and the filtered weak equivalences defined above.
\end{lem}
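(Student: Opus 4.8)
The plan is to unwind the comparison morphism of (K4'), reduce it by the five lemma to the counit of $\K_{V}$ on $\ker p$, and then establish the required connectivity by a spectral sequence built from the semifree filtration of $V$. First I would make the morphism explicit. By Lemma~\ref{lem:lim-mav} the pullback $F_{\K_{V}}E\times_{F_{\K_{V}}B}(D,\delta)$ is created in $\ma$, so its underlying module is the pullback $(E\otimes_{A}V)\times_{B\otimes_{A}V}D$ of $p\otimes_{A}V$ along $(f\otimes_{A}V)\circ\delta$. Tracing the cube in the remark following Proposition~\ref{prop:K6prime}, the induced map $\theta$ sends $(w,d)$ to $(\ve_{E}(w),d)$, where $\ve_{E}=E\otimes_{A}\ve$ is the counit of $\K_{V}$; this lands in $E\times_{B}D$ because $p\circ\ve_{E}=\ve_{B}\circ(p\otimes_{A}V)$ and $\ve_{D}\circ\delta=\id$.

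Next I would reduce to the counit on $\ker p$. Since $-\otimes_{A}V$ preserves kernels (Lemma~\ref{lem:lim-mav}) and $p$ is surjective — a fibration in the ICM structure is surjective in positive degrees, and Lemma~\ref{lem:0conn=conn} upgrades this to degree $0$ — both pullbacks project onto $D$ via epimorphisms whose kernels are $(\ker p)\otimes_{A}V$ and $\ker p$. As $\theta$ covers $\id_{D}$, it is a map of short exact sequences whose restriction to kernels is exactly the counit $\ve_{\ker p}\colon(\ker p)\otimes_{A}V\to\ker p$. Comparing the two long exact homology sequences, in which the maps on $H_{*}(D)$ are identities, the five lemma reduces the assertion $\theta\in\widetilde\W_{n+1}$ to showing that $\ve_{\ker p}$ is an isomorphism on $H_{j}$ for all $j\le n+1$. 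Finally, the long exact sequence of $0\to\ker p\to E\xrightarrow{p}B\to 0$, together with $p\in\W_{n}$, shows that $\ker p$ is $(n-1)$-connected.

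The crux is therefore the connectivity estimate: if $M$ is an $(n-1)$-connected right $A$-module, then $\ve_{M}\colon M\otimes_{A}V\to M$ is an isomorphism on $H_{j}$ for $j\le n+1$. Because $V$ has a degree-$0$ semifree generator mapping to the unit of $A$, the counit $\ve\colon V\to A$ is split surjective and $\overline V:=\ker\ve$ is an $A$-semifree summand on the reduced generating module; applying $M\otimes_{A}-$ gives a short exact sequence $0\to M\otimes_{A}\overline V\to M\otimes_{A}V\to M\to 0$, so it suffices to prove $M\otimes_{A}\overline V$ is $(n+1)$-connected. I would prove this by filtering $V$ (hence $\overline V$) by its semifree filtration. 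Since $V\cong A\otimes_{R}X$ as graded left $A$-modules, this induces a filtration of $M\otimes_{A}V\cong M\otimes_{R}X$ whose associated graded differential is $d_{M}\otimes\id$; as each $X_{k}$ is $R$-free and $R$ is semihereditary, the Künneth theorem identifies $E^{1}\cong H_{*}(M)\otimes_{R}X$ with no correction terms. Simple connectivity of $A$ ($H_{0}A=R$, $H_{1}A=0$) makes the $H_{*}(A)$-action on $H_{*}(M)$ trivial in the relevant low degrees, so $d^{1}=\id_{H_{*}(M)}\otimes\bar d$, where $\bar d$ is the differential computing $H_{*}(R\otimes_{A}V)$; hence $E^{2}\cong H_{*}(M)\otimes_{R}H_{*}(R\otimes_{A}V)$ and, for $\overline V$, $E^{2}\cong H_{*}(M)\otimes_{R}H_{*}(R\otimes_{A}\overline V)$. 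Since $H_{*}(M)$ vanishes below degree $n$ and $H_{*}(R\otimes_{A}\overline V)$ vanishes in degrees $\le 1$ by hypothesis~(1), every surviving class sits in total degree $\ge n+2$, so $M\otimes_{A}\overline V$ is $(n+1)$-connected. The finiteness in hypothesis~(2) guarantees the filtration is degreewise bounded, so the spectral sequence converges.

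I expect the spectral sequence step to be the main obstacle: one controls only $R\otimes_{A}V$, not $V$ itself as an $A$-module, so the entire point is to transport the simple connectivity of $R\otimes_{A}V$ to $M\otimes_{A}V$. Making this precise is exactly where the standing hypotheses conspire — $X$ degreewise $R$-free and finite for a clean, convergent Künneth argument; $R$ semihereditary so that cycles and boundaries remain flat across the pages and the higher Künneth isomorphism holds; and $H_{1}A=0$ to suppress the $H_{*}(A)$-action that would otherwise perturb the $d^{1}$-differential. The surjectivity of $p$ in degree $0$ is a secondary nuisance, but it is precisely what Lemma~\ref{lem:0conn=conn} was designed to settle.
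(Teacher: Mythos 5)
Your reduction for $n\geq 0$ is exactly the paper's: surjectivity of $p$ in all degrees, the map of short exact sequences with kernels $(\ker p)\otimes_{A}V$ and $\ker p$, and the five lemma reducing (K4$'$) to a connectivity statement for the counit on the fiber, proved by the spectral sequence of the semifree filtration together with K\"unneth over the semihereditary ring. But (K4$'$) is quantified over all $n\geq -1$, and your argument genuinely fails at $n=-1$: there $p\in \W_{-1}=\mor \ma$ carries no information beyond being a fibration, so $p$ is surjective only in positive degrees, and Lemma \ref{lem:0conn=conn} --- which you invoke to get surjectivity in degree $0$ --- does not apply, since it requires an $n$-equivalence with $n\geq 0$. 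Without surjectivity in degree $0$ there is no short exact sequence $0\to \ker p\to E\times_{B}D\to D\to 0$, and the whole comparison collapses. The paper treats $n=-1$ by a separate degreewise argument: hypothesis (1) is read as saying that the counit induces isomorphisms $(M\otimes_{A}V)_{k}\cong M_{k}$ in chain degrees $k=0,1$ for every $M$, so the comparison map of pullbacks is an isomorphism in degrees $0$ and $1$, hence an $H_{0}$-isomorphism. Some such chain-level argument is unavoidable here; your spectral sequence cannot substitute for it, because the fiber sequence it feeds on is unavailable.

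Your key connectivity estimate also takes a genuinely different route from the paper's, and the difference is where the remaining risk sits. The paper runs the spectral sequence on $F\otimes_{A}V$ itself; the column $p=0$ is then $H_{*}F$, and the crux is showing that $d^{2}_{2,n}:H_{n}F\otimes H_{2}(R\otimes_{A}V)\to H_{n+1}F$ vanishes --- this is computed explicitly in terms of the $H_{1}A$-action, and is precisely where $H_{1}A=0$ is used. You instead split $V\cong A\oplus \overline V$ and get the vanishing for free, since $E^{2}$ for $M\otimes_{A}\overline V$ is concentrated in total degrees $\geq n+2$; note that on this route $H_{1}A=0$ is never actually used (your $d^{1}$-identification needs $H_{0}A=R$, not $H_{1}A=0$, since the filtration-lowering part of the differential has coefficients in $A_{0}$). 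That is cleaner, but it rests on the claim that $\ve:V\to A$ is split surjective with kernel $\overline V$ that is $A$-semifree on generators whose reduction $R\otimes_{A}\overline V$ has vanishing $H_{0}$ and $H_{1}$. This does not follow formally from hypotheses (1)--(2), which concern only the homology of $R\otimes_{A}V$: nothing hands you a degree-$0$ generator mapping to the unit, and the coaugmentation is something the paper only asserts, without proof, in the remark preceding Theorem \ref{thm:fib-repl} and deliberately avoids in this proof. (A smaller point: your $E^{2}$ should carry K\"unneth $\operatorname{Tor}^{R}$-terms --- semiheredity does not kill them --- but these sit in total degree $\geq n+3$, so they are harmless.) So either you supply the splitting argument, or you must fall back on the paper's $d^{2}$-computation, which is exactly what the hypothesis $H_{1}A=0$ is there for.
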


\begin{rmk} It is easy to prove (K4) directly, but we obtain it here as a consequence of (K4'), which we prefer to prove, as it implies (K6) as well.
\end{rmk}

\begin{proof}  Let $(D,\delta)$ be an object in $\mav$, and let $f:D\to B$ be a morphism in $\ma$, inducing a morphism $f^{\sharp}:(D,\delta)\to (B\otimes _{A}V, B\otimes_{A}\Delta)$ in $\mav$. Let $p:E\to B$ be a fibration in the ICM structure and an $n$-equivalence. We treat separately the cases $n=-1$ and $n\geq 0$. 

Consider first the case $n=-1$, i.e., $p$ is any fibration in the ICM structure on $\ma$. Note that condition (1) of Theorem \ref{thm:mav} implies that for all right $A$-modules $M$, the counit $\ve$ induces isomorphisms $(M\otimes_{A}V)_{k}\cong M_{k}$ for $k=0,1$.   The map $(E\otimes _{A}V)\times _{B\otimes _{A}V}D \to E\times _{B} D$ is therefore an isomorphism in degrees 0 and 1, which implies that it induces an isomorphism in homology in degree 0 and therefore is a special $0$-equivalence (In fact, this is a $1$-equivalence, since the isomorphism in degree 1 implies a surjection in homology in degree 1.)

If $n\geq 0$, we argue as follows. The fibrations in the original right-induced model structure on $\ma$ (cf.~proof of Proposition \ref{prop:moda}) are exactly the chain maps that are surjective in positive degrees, which implies that the fibrations in the ICM stucture are also surjective in positive degrees. On the other hand, by Lemma \ref{lem:0conn=conn}, if $n\geq 0$, then an $n$-equivalence is surjective in degree 0 if it is surjective in degree 1.   It follows that if $p:E\to B$ is a fibration in the ICM structure and an $n$-equivalence for some $n\geq 0$, then it is surjective in every degree.  We can then apply a simple exact sequence argument to show that the fiber $F=\ker p$ of $p$ is $(n-1)$-connected, i.e., its homology is 0 through degree $n-1$.

 Since, as seen in the proof of Lemma \ref{lem:lim-mav}, $-\otimes _{A}V$ commutes with limits,  there is a commuting diagram of short exact sequences of $A$-modules
\begin{equation}\label{eqn:ses}
\xymatrix{0\ar [r]&F\otimes_{A}V\ar [d] _{}\ar [r]&(E\otimes_{A}V)\times _{B\otimes _{A}V}D\ar [d] _{}\ar [r]&D\ar@{=} [d]\ar [r]&0\\
0\ar [r]&F\ar [r]&E\times _{B}D\ar[r]&  D\ar [r]&0,}
\end{equation}
where the leftmost and middle vertical maps are induced by $\ve$. To conclude we show that the hypotheses on $X$ imply that the leftmost map in the diagram is a special $(n+1)$-equivalence, whence the middle map is also a special $(n+1)$-equivalence, as desired.

Filtering $F\otimes _{A}V$ by degree in $X$, we obtain a first-quadrant spectral sequence converging to $H_{*}(F\otimes_{A}V)$, with $E_{p,q}^{0}=F_{q}\otimes (R\otimes_{A}V)_{p}$ and 
$$E_{p,q}^{1}=H_{q}(F)\otimes (R\otimes_{A}V)_{p},$$ 
since $X$ is degreewise $R$-free.  
Note that since $R$ is semihereditary, and $R\otimes_{A}V$ is degreewise $R$-free and therefore $R$-flat, the K\"unneth Theorem (in the guise of \cite[Theorem 11.31]{rotman}) can be applied to prove the existence of short exact  sequences
$$0\to H_{q}F \otimes H_{p}(R\otimes _{A}V) \to E^{2}_{p,q}\to \operatorname{Tor}^{R}\big(H_{q}F, H_{p-1}(R\otimes_{A}V)\big)\to 0$$
for all $q\geq 0$, $p\geq 1$, while $E^{2}_{0,q}\cong H_{q}F \otimes H_{0}(R\otimes _{A}V)\cong H_{q}F$ for all $q\geq 0$, since $H_{0}(R\otimes _{A}V)=R$.
The connectivity condition on $F$ therefore implies that the second page of the spectral sequence satisfies
$E^{2}_{p,q}=0$ for all $q<n$.
Consequently, $H_{m}(F\otimes_{A}V)=0$ for all $m< n$, while $H_{n}(F\otimes_{A}V)\cong H_{n}(F)$.  

It remains only to establish the isomorphism $H_{n+1}(F\otimes _{A}V) \to H_{n+1}F$. It follows from the connectivity condition on $F$ that  $E^{2}_{p,n-p+1}\not =0$ only if $p=0$, as $H_{1}(R\otimes _{A}V)= 0$. 
Since $E^{2}_{0,n+1}\cong H_{n+1}F$,   the desired isomorphism holds if no nonzero differential hits $E^{2}_{0,n+1}$.

 The source of the only possible nonzero differential with target   $E^{2}_{0,n+1}$ is $$E^{2}_{2,n}=H_{n}F \otimes H_{2}(R\otimes_{A}V),$$ 
Note since $H_{1}(R\otimes_{A}V) = 0$, there is no Tor-term in $E^{2}_{2,n}$. 
 The differential
$$d^{2}_{2,n}:H_{n}F \otimes H_{2}(R\otimes_{A}V) \to H_{n+1}F$$
is given by $ d^{2}_{2,n}\big([y]\otimes [x]\big) = [y]\cdot [dx]$, where $\cdot$ denotes the induced action of $H_{*}A$ on $H_{*}F$.  Note that for an arbitrary element $x$ in $X$, $dx$ can have a summand in $A$, and it is the class in $H_{1}A$ of this summand  that acts on $[y]$ for $[x]\in H_{2}(R\otimes_{A}V)$.
Since $H_{1}A=0$ by hypothesis, 
we conclude that  $d^{2}_{2,n}=0$ and therefore that the map $H_{n+1}(F\otimes _{A}V) \to H_{n+1}F$ is an isomorphism. The leftmost vertical map in diagram (\ref{eqn:ses}) is therefore a special $(n+1)$-equivalence.
\end{proof}

\begin{lem}\label{lem:k5-mav} Under the hypotheses of Theorem \ref{thm:mav}, axiom (K5') holds for the comonad $\K_{V}$ on $\ma$, with respect to its ICM structure, the generic Postnikov presentation  $(\mathsf{Fib}, \mathsf {Fib} \cap \W)$ and the filtered weak equivalences defined above.
\end{lem}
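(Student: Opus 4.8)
The plan is to deduce (K5$'$) from a single connectivity estimate for $-\otimes_A V$ together with the $\lim$--$\lim^1$ calculus for towers of chain complexes. The geometric heart of the matter is that $K=-\otimes_A V$ preserves connectivity, which is precisely the content of the spectral sequence computation carried out in the proof of Lemma~\ref{lem:k4-mav}: if $F\in\ma$ is $(c-1)$-connected, i.e.\ $H_jF=0$ for $j<c$, then the hypotheses $H_0(R\otimes_A V)=R$, $H_1(R\otimes_A V)=0$ and $H_1A=0$ force $H_m(F\otimes_A V)=0$ for $m<c$ and $H_c(F\otimes_A V)\cong H_cF$. I would first isolate this as a lemma. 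It implies at once that $K$ carries $\W_n$ into $\W_n$ and, via the long exact sequence of a fibre sequence, that every morphism in $\mathsf{Post}_{K(\mathsf X)}\cap\W_n$ has $(n-1)$-connected homotopy fibre.

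Next I would reduce to a manageable shape. Because $\mathsf{Post}_{K(\mathsf X)}$ is closed under composition of towers (Remark~\ref{rmk:post-closure}) and $-\otimes_A V$ commutes with the relevant limits (Lemma~\ref{lem:lim-mav}), it suffices to treat one countable tower
$$\cdots\to Z_{m+1}\xrightarrow{\,p_{m+1}\,}Z_m\to\cdots\to Z_0$$
in $\ma$ with each $p_{m+1}\in\mathsf{Post}_{K(\mathsf X)}\cap\W_n$, and to bound the filtration of $\lim_m Z_m\to Z_0$. Since each $p_{m+1}$ is an $n$-equivalence, the homology tower $\{H_j(Z_m)\}_m$ consists of isomorphisms for $j<n$ and of surjections for $j=n$; in particular it is Mittag--Leffler in these degrees, so $\lim^1_m H_j(Z_m)=0$ for all $j\le n$, while $\lim_m H_j(Z_m)=H_j(Z_0)$ for $j<n$ and $\lim_m H_n(Z_m)\twoheadrightarrow H_n(Z_0)$.

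I would then feed this into the Milnor exact sequence
$$0\to\lim^1_m H_{j+1}(Z_m)\to H_j(\lim_m Z_m)\to\lim_m H_j(Z_m)\to0 .$$
Granting its validity, the computation above yields $H_j(\lim_m Z_m)\cong H_j(Z_0)$ for all $j\le n-1$ together with a surjection onto $H_n(Z_0)$, i.e.\ $\lim_m Z_m\to Z_0\in\W_n$, so that in the best case even $k=0$ works. Combined with Remark~\ref{rmk:K5'}, this would establish (K5$'$) and hence (K5).

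The main obstacle is exactly the validity of that Milnor sequence for the \emph{categorical} inverse limit. Tensoring with $V$ destroys surjectivity in every degree in which $X$ has generators---$K(x)_j$ contains the summand $x_0\otimes X_j$, which need not be surjective even when $x$ is a fibration---so the transition maps $p_{m+1}$ are in general not surjective, the chain-group towers $\{(Z_m)_j\}_m$ need not be Mittag--Leffler, and the categorical limit $\lim_m Z_m$ may diverge from the homotopy limit. The discrepancy is governed by the degreewise $\lim^1$ of the chain groups, and the plan is to bound its connectivity using that each fibre, being $K$ of a connective module, is connective and that $-\otimes_A V$ preserves connectivity, with the finiteness of each $X_n$ (hypothesis~(2)) preventing the towers from degenerating; Lemma~\ref{lem:0conn=conn} is available to convert low-degree surjectivity statements into degree-$0$ ones whenever $n\ge0$. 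I expect this correction to cost at most a bounded, explicit number of filtration degrees, so that some finite $k$ (conjecturally $k=1$, valid for all $n\ge0=k-1$) suffices---and pinning down this Mittag--Leffler/$\lim^1$ estimate is where essentially all of the difficulty lies.
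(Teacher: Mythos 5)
Your proposal reproduces the paper's strategy but not its proof. The paper argues for Lemma \ref{lem:k5-mav} exactly along the lines you sketch: show the towers are degreewise Mittag--Leffler, invoke the $\lim$--$\lim^1$ (Milnor) sequence for towers of chain complexes, and conclude from the fact that the homology towers consist of isomorphisms below degree $n$ and surjections in degree $n$ that the composite is again an $n$-equivalence --- indeed the paper gets the strong form $k=0$, not your conjectural $k=1$. (The connectivity lemma you extract from the spectral sequence of Lemma \ref{lem:k4-mav} plays no role in the paper's proof of this particular statement.) What your proposal does not do is prove the Mittag--Leffler input: it ends by conjecturing that some finite $k$ works and declaring that ``pinning down this Mittag--Leffler/$\lim^1$ estimate is where essentially all of the difficulty lies.'' Since that estimate is the entire mathematical content of (K5') in this setting, the proposal as written has a genuine gap; it is a plan plus an accurate diagnosis of the hard point, not an argument.

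For comparison, here is how the paper fills that step. Each transition map $p_k$ is a (transfinite) composition of pullbacks of maps $q_\beta\otimes_A V$ with $q_\beta$ an ICM fibration; ICM fibrations are surjective in positive degrees, being in particular fibrations of the right-induced structure of Proposition \ref{prop:moda}; the paper then asserts that $q_\beta\otimes_A V$ is still surjective in positive degrees, so the pullbacks $p_{k,\beta}$ and their compositions $p_k$ are too (Lemma 3.5.3 of \cite{weibel}); Lemma \ref{lem:0conn=conn} upgrades each composite $p_k\in \W_n$, $n\geq 0$, to a degreewise surjection; and Theorem 3.5.8 of \cite{weibel} then supplies the Milnor sequence with $k=0$. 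Note that your stated obstacle is aimed precisely at the asserted step: since $V\cong A\otimes X$ as left $A$-modules, $(q\otimes_A V)_j$ contains the summand $q_0\otimes X_j$, and an ICM fibration need not be surjective in degree $0$ (for instance $0\to R$ is an ICM fibration in $\ma$, as every chain map from an acyclic nonnegatively graded complex to $R$ vanishes), so when $X$ has generators in positive degrees the assertion is not automatic. You have thus put your finger on the thinnest point of the paper's own proof --- a legitimate and rather sharp observation --- but identifying the difficulty is not resolving it. A complete proof along these lines must either show that the relevant surjectivity survives $-\otimes_A V$ (or that the fibrations occurring can be arranged to be surjective in degree $0$), or compare the categorical limit with the homotopy limit by some other device; your proposal does neither, so it does not establish the lemma.
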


\begin{proof} Note first that axiom (K5') holds trivially for $n=-1$.  Consider a tower
$$\cdots \xrightarrow{p_{k+2}} X_{k+1}\xrightarrow{p_{k+1}}X_{k}\xrightarrow{p_{k}}\cdots \xrightarrow{p_{2}} X_{1}\xrightarrow{p_{1}}X_{0}$$
with each $p_{k}$ in $\mathsf{Post}_{\mathsf {Fib}\otimes_{A}V}\cap \W_{n}$, where $\mathsf{Fib}$ denotes the class of fibrations in the ICM structure on $\ma$, and $n\geq 0$.
Each $p_{k}:X_{k}\to X_{k-1}$ is the composition of a tower of length $\lambda$ for some ordinal $\lambda$
$$\cdots \xrightarrow{p_{k,\beta+2}} X_{k,\beta+1}\xrightarrow{p_{k,\beta+1}}X_{k,\lambda}\xrightarrow{p_{k,\beta}}\cdots \xrightarrow{p_{k,2}} X_{k,1}\xrightarrow{p_{k,1}}X_{k,0}=X_{k-1},$$
where  there is a fibration $q_{\beta}:E_{\beta}\to B_{\beta}$ in the ICM structure on $\ma$ and a pullback in $\ma$,
$$\xymatrix{ X_{k,\beta}\ar [d]_{p_{k,\beta}}\ar[r]& E_{\beta}\otimes _{A}V\ar [d]^{q_{\beta}\otimes_{A}V}\\
			X_{k,\beta-1}\ar [r]&B_{\beta}\otimes_{A}V}$$
for every ordinal $\beta < \lambda $.			

Since $q_{\beta}$ is surjective in positive degrees, so are $q_{\beta}\otimes_{A}V$ and thus $p_{k,\beta}$ as well, for all $k$ and $\beta$. Lemma 3.5.3 in \cite{weibel}, which generalizes easily to higher ordinals, therefore implies that each $p_{k}$ is surjective in positive degrees.  By Lemma \ref{lem:0conn=conn}, since $p_{k}$ is also an $n$-equivalence for some $n\geq 0$, it is surjective in degree 0 as well.  

It follows now from Theorem 3.5.8 in \cite {weibel} that there are isomorphisms
$$H_{m}(\lim_{k} X_{k}) \xrightarrow \cong \lim _{k}H_{m}(X_{k})\xrightarrow\cong H_{m}(X_{0})$$
for all $m<n$,  since $H_{m}p_{k}$ is an isomorphism for all $k$, and surjections
$$H_{n}(\lim_{k} X_{k}) \to  \lim _{k}H_{n}(X_{k})\to  H_{n}(X_{0}),$$
since $H_{n}p_{k}$ is a surjection for all $k$. In other words, the composition
$$\lim _{k}X_{k}\to X_{0}$$
is an $n$-equivalence.

Note that we have proved a strong version of (K5'), as the degree of equivalence of the composition is the same as the degree of equivalence of each morphism in the tower.
\end{proof}

\begin{proof}[Proof of Theorem \ref{thm:mav}]  Since $V$ is $A$-semifree, the functor $-\otimes _{A}V$ preserves monomorphisms.  Lemma \ref{lem:1st-factor} implies therefore that axioms (K0) through (K3) hold for the comonad $\K_{V}$.  Lemmas \ref{lem:wws} and \ref{lem:k4-mav} together imply that axioms (K4) and (K6) hold for $\K_{V}$, by Proposition \ref{prop:K6prime}.  Finally, axiom (K5) for $\K _{V}$ follows from Lemma \ref{lem:k5-mav}, as explained in Remark \ref{rmk:K5'}.  
The comonad $\K_{V}$ is therefore tractable and allows inductive arguments, so we can apply Theorem \ref{thm:bigthm} to conclude.
\end{proof}

\begin{exs}\label{exs} The two corings of greatest interest in the context of Hopf-Galois extensions \cite{hess:hhg} and Grothendieck descent \cite{hess:descent} both satisfy the hypotheses of Theorem \ref{thm:mav}, under reasonable conditions.
\begin{enumerate}
\item If $A$ is an augmented, dg $R$-algebra such that $H_{1}A=0$, and $K$ is a dg Hopf algebra such that $H_{0}K_{0}=R$, $H_{1}K_{1}=0$,  and $K_{n}$ is $R$-free and finitely generated for all $n$, then the coring $A\otimes K$ \cite{hess:hhg} satisfies the hypotheses of Theorem \ref{thm:mav}. For example, if $X$ is a 2-reduced simplicial set with finitely many nondegenerate simplices in each degree, and $\Omega$ and $C_{*}$ denote the cobar construction functor and the reduced normalized chain functor, respectively, then $\Om C_{*}X$ is one such dg Hopf algebra \cite{hpst}.
\item Let $B$ and $A$ be augmented dg $R$-algebras such that $A$ is semifree as a left $B$-module on a generating graded $R$-module $Y$ such that $H_{0}(R\otimes _{B}A)=R$, $H_{1}(R\otimes _{B}A)=0$, and $Y_{n}$ is $R$-free and finitely generated for all $n$.  If $H_{1}A=0$, then the canonical coring $A\otimes_{B} A$ associated to the inclusion $B\hookrightarrow A$ \cite{hess:descent} satisfies the hypotheses of Theorem \ref{thm:mav},  as it is left $A$-semifree on $Y$.  

For example, if $B$ is an augmented Hirsch algebra \cite{kadeishvili} such that $B_{0}=R$ and $B_{n}$ is $R$-free and finitely generated for all $n$, then the inclusion of $B$ into the acyclic bar construction $B\otimes _{t_{\op B}} \op B B$ is a multiplicative extension of this type. More generally, if $t: K\to B$ is any Hopf-Hirsch twisting cochain \cite{berglund-hess}, where $H_{0}K=R$, $H_{1}K=0$, and $K_{n}$ is $R$-free and finitely generated for all $n$,  then the multiplicative extension $B\to B\otimes _{t}K$ is also of this type.
\end{enumerate}
\end{exs}

\section{Fibrations  of comodules over corings}

In this section we provide examples of fibrations in the ICM structure on $\ma$ and in the induced structure on $\mav$, where we require that $R$ be a commutative ring that is semihereditary and either Artinian or a Frobenius ring over a field.
In the case of $\mav$, we assume furthermore that the nondifferential algebra $\natural A$ underlying $A$ is a connected (i.e., $A_{0}=R$), \emph{nearly Frobenius} algebra \cite [Definition 2.4]{moore-peterson2}.  In particular,  by  \cite [Theorem 2.7]{moore-peterson2} (see also the remark immediately following the proof),  if $R$ is Artinian or a Frobenius ring over a field, then a graded module over a connected, nearly Frobenius $R$-algebra  is injective if and only if it is projective if and only it is flat.  

 Examples of nearly Frobenius algebras include any algebra underlying a finite dimensional, cocommutative Hopf algebra over $R$, if $R$ is a field  \cite[Section 3]{moore-peterson}.  More generally, the colimit of a filtered, \emph{strongly coherent} diagram of nearly Frobenius algebras is nearly Frobenius  \cite [Definition 2.5, Theorem 2.6]{moore-peterson2}.  In particular,  the mod $p$ Steenrod algebra is nearly Frobenius.     

We need to introduce some helpful notation before stating the main theorem of this section.

\begin{notn} For any dg $R$-algebra $A$, let $T_{A}$ denote the free monoid functor on the category of $A$-bimodules.  In other words, if $M$ is an $A$-bimodule, then
$$T_{A}M= A \oplus \bigoplus _{n\geq 1} M^{\otimes _{A}^{n}},$$
which is naturally a monoid in the category of $A$-bimodules, via concatenation.  Let $y_{1}|\cdots|y_{n}$ denote an arbitrary  element of tensor length $n$.
\end{notn}

\begin{notn} For any $X\in \ob \ch$ with $X_{0}=0$, we let $s^{-1}X$ denote the desuspension of $X$, i.e., $\si X_{n}=X_{n+1}$ for all $n\geq 0$.   Let $\path(X)=(X\oplus \si X, D)$, with $Dx=dx + \si x$ and $D\si x=-\si (dx)$, where $d$ is the differential on $X$.   Let $e_{X}:\path (X) \to X$ denote the natural quotient map.

Note that if $M$ is an $A$-module (respectively, a $V$-comodule in right $A$-modules) such that $M_{0}=0$, then $\si M$ and $\path (M)$ both naturally inherit an $A$-action (respectively, a $V$-coaction and $A$-action) from $M$ such that the projection map $e_{M}$ is a morphism in $\ma$ (respectively, $\mav$), justifying our use of the same notation for this functor on $\ch$, $\ma$ and $\mav$. Observe moreover that 
$$\path (F_{\K_{V}}M)\cong F_{\K_{V}}\path (M)$$
for all right $A$-modules $M$.
\end{notn}

\begin{notn}
If $(M,\delta)\in \ob \mav$, we write $\delta (x)=x_{i}\otimes v^{i}$ (using the Einstein summation convention) for all $x\in M$.  Similarly, for all $v\in V$, we write $\Delta (v)=v_{i}\otimes v^{i}$, where $\Delta $ is a comultiplication on $V$.
\end{notn}

\begin{notn} We apply in this section the Koszul sign convention for commuting elements  of a graded module past each other or for commuting a morphism of graded modules past an element of the source module.  For example,  if $V$ and $W$ are graded algebras and $v\otimes w, v'\otimes w'\in V\otimes W$, then 
$$(v\otimes w)\cdot (v'\otimes w')=(-1)^{mn}vv'\otimes ww',$$
if $v'\in V_{m}$ and $w\in W_{n}$. Furthermore, if $f:V\to V'$ and $g:W\to W'$ are morphisms of graded modules, homogeneous of degrees $p$ and $q$, respectively,  then for all $v\otimes w\in V_{m}\otimes W_{n}$,  
$$(f\otimes g)(v\otimes w)=(-1)^{mq} f(v)\otimes g(w).$$
\end{notn}

\begin{notn} When we need to be especially precise and careful, we use $\natural X$ to denote the graded $R$-module underlying a chain complex $X$.  If it is clear from context, and there is no risk of confusion, then both are denoted $X$, to simplify notation.
\end{notn}

The generalized cobar construction defined below is the tool we need to construct fibrant replacements in $\mav$.  This is no great surprise as, for example,  both the first author in \cite{hess:hhg} and Positselski \cite{positselski} showed that the usual one-sided cobar construction provided fibrant replacements in the category of comodules over a dg coalgebra, at least  over a field and under certain finiteness conditions.  It is nice to see, however, that this useful result generalizes to comodules over corings, even if the proof is  more delicate in the more general case.
 
\begin{defn} Let $A$ be a dg $R$-algebra and $(V,\Delta, \ve, \eta)$ a coaugmented $A$-coring, with coaugmentation coideal $\overline V=\coker (\eta: A \to V)$.  For all $(M,\delta)\in \ob\mav$, let $\Om_{A}(M;V;V)$ denote the object in $\mav$
$$(M\otimes _{A}T_{A}  (s^{-1}  \overline V) \otimes _{A}V, d_{\Om}),$$
where
\begin{align*}
d_{\Om}(x \otimes \si v_{1}|\cdots |\si v_{n}\otimes w)= &\,dx \otimes \si v_{1}|\cdots |\si v_{n}\otimes w \\
&+x\otimes \sum _{j=1}^{n}\pm\si v_{1}|\cdots |\si dv_{j}|\cdots \si v_{n}\otimes w\\
&\pm x \otimes \si v_{1}|\cdots |\si v_{n}\otimes dw\\
&\pm x_{i}\otimes \si v^{i}| \si v_{1}|\cdots |\si v_{n}\otimes w\\
&+x\otimes \sum _{j=1}^{n}\pm\si v_{1}|\cdots |\si v_{j,i}|\si v_{j}^{{i}}|\cdots \si v_{n}\otimes w\\
&\pm x\otimes \si v_{1}|\cdots |\si v_{n}|\si w_{i}\otimes w^{i}
\end{align*}
where all signs are determined by the Koszul rule, the differentials of $M$ and $V$ are both denoted $d$, and both the right $A$-module structure and the $V$-comodule structure are induced from the rightmost copy of $V$.  
\end{defn}

\begin{rmk} Any $A$-coring $V$ that is left $A$-semifree on a generating graded module $X$ satisfying the hypotheses of Theorem \ref{thm:mav} is naturally coaugmented.  Its coaugmentation coideal $\overline V$ is semifree on the generating graded module $\overline X$ such that $\overline X_{0}=0$ and $\overline X_{n}=X_{n}$ for all $n\geq 1$.
\end{rmk}

We can now state precisely how fibrant replacements can be constructed in $\mav$, under strong enough conditions on $R$, $A$ and $V$.

\begin{thm}\label{thm:fib-repl} Let $R$ be a semihereditary commutative ring. Let $A$ be a dg $R$-algebra and $V$ an $A$-coring satisfying the hypotheses of Theorem \ref{thm:mav}. 

If $R$ is also Artinian or a Frobenius ring over a field, $\natural A$ is nearly Frobenius, and the right $A$-action on $V$ satisfies
$$(a\otimes x)\cdot b-(-1)^{mn}ab\otimes x\in A\otimes X_{<m}$$ 
for all $a\in A$, $x\in X_{m}$, $b\in A_{n}$ and $m,n\geq 0$, then 
for all $(M,\delta)\in \ob \mav$ such that $\natural M$ is $\natural A$-free, the coaction map $\delta: M\to M\otimes _{A}V$ factors in $\mav $ as
$$\xymatrix{ M\ar [dr]_{\widetilde \delta}\ar [rr]^{\delta}&&M\otimes_{A}V,\\ 
			&\Om_{A}(M;V;V)\ar [ur]_{p}}$$
where $\widetilde \delta$ is a trivial cofibration and $p$ a fibration, specified by $\widetilde \delta (x)=x_{i}\otimes 1\otimes v^{{i}}$  and $p(x\otimes 1\otimes w)=x\otimes w$, while $p(x \otimes \si v_{1}|\cdots |\si v_{n}\otimes w)=0$ for all $n\geq 1$.    
Moreover, both the source and the target of $p$ are fibrant in $\mav$, whence $\Om_{A}(M;V;V)$ is a fibrant replacement of $M$ in $\mav$.
\end{thm}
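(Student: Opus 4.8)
The plan is to verify the claimed factorization directly and then to establish its properties---$\widetilde\delta$ a cofibration, $\widetilde\delta$ a weak equivalence, $p$ a fibration, and the two fibrancy statements---largely independently. First I would check that $\widetilde\delta$ and $p$ are genuine morphisms of $\mav$ and that $p\widetilde\delta=\delta$. The identity $p\widetilde\delta=\delta$ is immediate from the formulas, since $p(x_{i}\otimes 1\otimes v^{i})=x_{i}\otimes v^{i}=\delta(x)$. That $\widetilde\delta$ is a chain map follows from coassociativity and counitality of $\delta$, while $p$ is a chain map because the only summands of $d_{\Om}$ that do not raise tensor length are the internal differentials $dx$ and $dw$: every other summand strictly increases the length in $T_{A}(\si\overline V)$ and is therefore annihilated by $p$. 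Right $A$-linearity and $V$-colinearity of both maps hold because all structure is induced from the rightmost copy of $V$. Finally $\widetilde\delta$ is a monomorphism, hence (since cofibrations in $\mav$ are created by the forgetful functor, and monomorphisms in $\ma$ are detected underlying by Lemma \ref{lem:mono-mono}) a cofibration, because applying $\ve$ to the rightmost $V$ retracts $\widetilde\delta$.

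For the weak equivalence I would exhibit the standard contracting homotopy of the two-sided cobar construction. The coaugmentation $\eta\colon A\to V$ splits off $\overline V$, and the induced extra codegeneracy yields an $R$-linear homotopy $h$ on $\natural\Om_{A}(M;V;V)$, lowering tensor length by one, with $d_{\Om}h+hd_{\Om}=\id-\widetilde\delta\,\pi$, where $\pi$ is the projection onto the tensor-length-zero summand $M$. Thus $\widetilde\delta$ is an $R$-linear homotopy equivalence, in particular a quasi-isomorphism, so it lies in $U^{-1}(\mathsf{WE})=\mathsf{WE}_{\mav}$. Together with the previous paragraph this shows $\widetilde\delta$ is a trivial cofibration.

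The core of the argument is to realize $p$ as an $F_{\K_{V}}\mathsf X$-Postnikov tower. Because $\overline V$ is simply connected and of finite type (hypotheses of Theorem \ref{thm:mav}), $\si\overline V$ is concentrated in positive degrees, so in each homological degree only finitely many cobar words contribute; moreover $d_{\Om}$ never lowers tensor length. Hence the words of length $\geq n{+}1$ form a subcomplex, the quotients $\Om_{(n)}$ (words of length $\leq n$) form an inverse tower with $\Om_{(0)}=M\otimes_{A}V$ and $\Om_{A}(M;V;V)=\lim_{n}\Om_{(n)}$, and $p$ is the projection to $\Om_{(0)}$. I would then show that each $\Om_{(n)}\to\Om_{(n-1)}$ is a pullback of a cofree map $F_{\K_{V}}(e_{W_{n}})\colon F_{\K_{V}}\path(W_{n})\to F_{\K_{V}}W_{n}$, using $\path(F_{\K_{V}}W)\cong F_{\K_{V}}\path(W)$ and reading off the $A$-module $W_{n}$ from the length-raising summands of $d_{\Om}$ (the coaction term $x_{i}\otimes\si v^{i}\mid\cdots$ and the cofree term $\cdots\mid\si w_{i}\otimes w^{i}$). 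Since $\path(W_{n})$ is acyclic and $e_{W_{n}}$ is surjective in positive degrees, $e_{W_{n}}$ is a fibration in $\ma$, so $F_{\K_{V}}(e_{W_{n}})\in F_{\K_{V}}\mathsf X$ for the generic presentation; by closure under composition of towers (Remark \ref{rmk:post-closure}) it follows that $p\in\mathsf{Post}_{F_{\K_{V}}\mathsf X}=\mathsf{Fib}_{\mav}$. This is exactly the step where the hypothesis on the right $A$-action enters: the estimate $(a\otimes x)\cdot b-(-1)^{mn}ab\otimes x\in A\otimes X_{<m}$ guarantees that the right action is filtered by $X$-degree with the evident associated graded, which is what makes the matching objects $W_{n}$ honest $A$-modules and the comparison maps genuinely cofree. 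I expect this identification to be the main obstacle.

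It remains to prove fibrancy of the two ends. The target is $M\otimes_{A}V=F_{\K_{V}}M$, and since $-\otimes_{A}V$ is right adjoint to the forgetful functor $U$, which creates cofibrations and weak equivalences, it is right Quillen and so preserves fibrant objects; thus it suffices that $M$ be fibrant in the ICM structure on $\ma$. As $\natural A$ is connected and nearly Frobenius, $R$ is Artinian or Frobenius over a field, and $\natural M$ is $\natural A$-free, the cited equivalence of injectives, projectives and flats shows $\natural M$ is injective as a graded $\natural A$-module; a degreewise extension argument against an acyclic cokernel then gives $M$ the right lifting property with respect to trivial cofibrations, i.e.\ $M$ is fibrant, whence $M\otimes_{A}V$ is fibrant. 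Finally, $\Om_{A}(M;V;V)$ is fibrant because the composite of $p\colon\Om_{A}(M;V;V)\to M\otimes_{A}V$ with the map $M\otimes_{A}V\to e$ to the terminal object is a composite of fibrations (the first by the core step above, the second because $-\otimes_{A}V$ preserves the terminal object and $M\otimes_{A}V$ is fibrant). This completes the plan.
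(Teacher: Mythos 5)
There is a genuine gap in the core step, precisely at the point where you dispose of it in one line: you claim that each $e_{W_{n}}\colon \path(W_{n})\to W_{n}$ is a fibration in $\ma$ ``since $\path(W_{n})$ is acyclic and $e_{W_{n}}$ is surjective in positive degrees.'' That criterion characterizes fibrations in the \emph{projective} (right-induced) model structure of the proof of Proposition \ref{prop:moda}, not in the ICM structure that is actually in play here. In the ICM structure the cofibrations are \emph{all} monomorphisms, so the fibrations form a strictly smaller class than the degreewise-positive surjections: surjectivity in positive degrees is necessary but far from sufficient. To see that $e_{E}\colon\path(E)\to E$ has the right lifting property against all monomorphisms that are quasi-isomorphisms, one must extend ($A$-linear) contracting homotopies along monomorphisms with acyclic cokernel, and this requires $\hom_{A}(-,E)$ to carry the relevant short exact sequences of $A$-modules to short exact sequences of complexes --- i.e., it requires $\natural E$ to be injective as a graded $\natural A$-module. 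This is exactly Proposition \ref{prop:fib-ma}(2), and its injectivity hypothesis cannot be dropped: over a field the issue is invisible, but already for $A=R=\mathbb{Z}$ and $W=\mathbb{Z}/2$ placed in degree $1$ the map $e_{W}$ is a degreewise surjection that is not an ICM fibration.

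This also means you have misidentified where the hypotheses of the theorem enter. The objects $W_{n}$ (the paper's $B^{n}\cong s^{-n}(M\otimes_{A}\overline V^{\otimes_{A}n+1})$) are honest right $A$-modules, and the comparison maps are cofree, with no hypotheses at all --- the right $A$-structure comes from the rightmost tensor factor. What the condition $(a\otimes x)\cdot b-(-1)^{mn}ab\otimes x\in A\otimes X_{<m}$ actually buys, via Lemma \ref{lem:free}, is that $\natural\overline V$ is \emph{free as a right} $\natural A$-module (it is only given as semifree on the left); combined with $\natural M$ being $\natural A$-free this makes each $\natural(M\otimes_{A}\overline V^{\otimes_{A}n+1})$ right $\natural A$-free (Corollary \ref{cor:injobj}), and the nearly Frobenius plus Artinian/Frobenius hypotheses then upgrade ``free'' to ``injective,'' so that Proposition \ref{prop:fib-ma}(2) applies and each $F_{\K_{V}}(e_{W_{n}})$ really does lie in $F_{\K_{V}}(\mathsf{Fib})$. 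With that repair your argument becomes the paper's: your tensor-length truncations $\Om_{(n)}$, realized as iterated pullbacks of the maps $F_{\K_{V}}(e_{W_{n}})$ with limit $\Om_{A}(M;V;V)$, are exactly the paper's tower $E^{n}$, so the two proofs of ``$p$ is a fibration'' coincide. Your explicit treatment of $\widetilde\delta$ (counit retraction for the cofibration, extra-codegeneracy contraction for the weak equivalence, modulo the small slip that the length-zero summand is $M\otimes_{A}V$ rather than $M$) and of the fibrancy of both ends is a reasonable supplement to what the paper leaves implicit, but it does not compensate for the missing injectivity argument, which is the heart of the theorem.
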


As Lemma \ref{lem:free} below shows, we can set $M=V$ in the statement above and obtain in particular a factorization
$$\xymatrix{ V\ar [dr]_{\widetilde \Delta}\ar [rr]^{\Delta}&&V\otimes_{A}V,\\ 
			&\Om_{A}(V;V;V)\ar [ur]_{p}}$$
in $\mav$ with $\widetilde \Delta$ a trivial cofibration and $p$ a fibration between fibrant objects.

\begin{ex} Suppose that $R$ is semihereditary and either Artinian or a Frobenius algebra over a field, e.g., $R$ is a field. Both the Hopf-Galois coring $A\otimes K$ and the descent coring $A\otimes _{B}A$ of Examples \ref{exs} then satisfy the hypotheses of Theorem \ref{thm:fib-repl} if $\natural A$ is nearly Frobenius.  For example, if $R$ is a field, and $\natural A$ underlies a cocommutative graded Hopf algebra over $R$ that is equal to the union of its finite-dimensional sub Hopf algebras, then Theorem \ref{thm:fib-repl} applies.
\end{ex}

Before proving Theorem \ref{thm:fib-repl}, we establish a number of preparatory lemmas and propositions.
In particular, in order to construct fibrant replacements in $\mav$, we need to know something about fibrations and fibrant objects in $\ma$.
 
\begin{prop}\label{prop:fib-ma} Let $R$ be any commutative ring, and let $A$ be a  dg $R$-algebra. If $E$ is a right $A$-module such that $\natural E$ is $\natural A$-injective, then
\begin{enumerate}
\item $E$ is fibrant in the ICM structure on $\ma$, and
\item if $E_{0}=0$, then the projection $e_{E}:\path (E) \to E$ is a fibration in the ICM structure on $\ma$.
\end{enumerate}
\end{prop}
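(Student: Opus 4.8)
The plan is to reduce both statements to the characterization of fibrations in the ICM structure via the right lifting property against trivial cofibrations, i.e., against monomorphisms that are quasi-isomorphisms (recall that in the ICM structure the cofibrations are exactly the monomorphisms, so the trivial cofibrations are the monomorphisms in $\mathsf W$). Since the ICM structure and the original right-induced (projective) structure on $\ma$ share the same weak equivalences, and the cofibrations of the ICM structure contain those of the projective structure, the trivial cofibrations of the two structures coincide; hence a morphism is a fibration in the ICM structure if and only if it has the right lifting property against all monomorphisms that are quasi-isomorphisms. I would state this explicitly at the outset, as both parts hinge on it.

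For part (1), I would show that the unique map $E \to e$ to the terminal object (the zero module) has the right lifting property against every trivial cofibration $i : M \hookrightarrow N$. This amounts to extending an arbitrary $A$-module map $f : M \to E$ along $i$ to a map $N \to E$. The key observation is that since $i$ is a monomorphism that is a quasi-isomorphism, its cokernel $N/M$ is acyclic; moreover $i$ is degreewise split because $\natural E$ being $\natural A$-injective lets us solve the extension problem degreewise. The heart of the argument is to build the extension by induction on degree, at each stage using $\natural A$-injectivity of $\natural E$ to lift against the monomorphism of underlying graded $A$-modules while simultaneously respecting the differential. The acyclicity of the cokernel is exactly what makes the differential-compatibility constraints solvable at each inductive step; this is the standard ``injective modules are fibrant'' argument adapted to the dg setting.

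For part (2), I would verify directly that $e_E : \path(E) \to E$ has the right lifting property against every trivial cofibration $i : M \hookrightarrow N$. Given a lifting problem with top map $g : M \to \path(E) = (E \oplus \si E, D)$ and bottom map $h : N \to E$ compatible via $e_E$, I would construct the lift $N \to \path(E)$ by specifying its two components: the $E$-component is forced to be $h$ (so that composing with $e_E$ recovers $h$), and the problem reduces to producing the $\si E$-component, namely an $A$-module map $N \to \si E$ of the appropriate degree extending the given one on $M$ and satisfying the twisted compatibility dictated by the differential $D$ of $\path(E)$. Because $\natural{\si E}$ is again $\natural A$-injective (a shift of an injective is injective), the extension exists degreewise, and the contractibility built into $\path(E)$ — the fact that $D\si x = -\si(dx)$ and $Dx = dx + \si x$ encode a nullhomotopy — is what permits the compatibility condition to be met.

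The main obstacle, in both parts, is the interplay between the degreewise $\natural A$-injective lifting and the differential: $\natural A$-injectivity only gives extensions of the underlying graded module maps, so the real work is the inductive bookkeeping that upgrades these to genuine chain maps respecting the (possibly twisted) differentials, using acyclicity of the relevant cokernel in part (1) and the explicit contracting structure of $\path(E)$ in part (2). I expect the cleanest route is to phrase the lifting problem as solving for the $\si E$-component, where the contractibility of $\path(E)$ converts the chain-map constraint into a purely algebraic extension problem that $\natural A$-injectivity resolves.
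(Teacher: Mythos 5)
Your skeleton matches the paper's proof: reduce both parts to lifting against monomorphisms that are quasi-isomorphisms; in part (1) extend the map using injectivity of $\natural E$ and then repair it using acyclicity of the cokernel; in part (2) note that the $E$-component of the lift is forced, so only the $\si E$-component must be produced. The genuine gap is in part (2). Writing the top map as $(hi,\si \Upsilon)$ for some $A$-linear $\Upsilon:M\to E$ of degree $+1$, the condition that $(h,\si\widehat\Upsilon):N\to\path(E)$ be a chain map lifting it reads (with the paper's signs) $\widehat\Upsilon\circ i=\Upsilon$ and $d\widehat\Upsilon-\widehat\Upsilon d=h$: you must extend a contracting homotopy of $hi$ to a contracting homotopy of $h$. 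You attribute the solvability of this to the contracting structure of $\path(E)$ together with injectivity of $\natural(\si E)$, and nowhere in your part (2) does the hypothesis that $i$ is a quasi-isomorphism enter. It must: the path-object structure only converts the lifting problem into this homotopy-extension problem, and solving it genuinely requires acyclicity of $N/M$. Concretely, take $M=0$, $N=E$, and $h=\id_{E}$ with $E$ not acyclic; a lift of $\id_{E}$ through $e_{E}$ is exactly a contracting homotopy of $E$, which cannot exist because a nullhomotopic map induces zero on homology. So $e_{E}$ does \emph{not} have the right lifting property against general monomorphisms, and any correct argument has to use acyclicity of $N/M$ in part (2) just as in part (1). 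That is what the paper does: extend $\Upsilon$ to a graded $A$-linear map $\Upsilon':N\to E$ of degree $+1$, observe that the defect $d\Upsilon'-\Upsilon'd-h$ vanishes on $M$ and hence equals $q^{*}(\theta)$ for a cycle $\theta$ in $\hom_A(N/M,E)$, and kill $\theta$ using the acyclicity of that complex.

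This points to the secondary weakness, in part (1): the mechanism you propose --- building the extension ``by induction on degree,'' invoking $\natural A$-injectivity ``degreewise'' --- does not parse well, because a graded $A$-linear map cannot be prescribed one degree at a time (its values on $N_{n}$ are constrained by lower degrees through the right $A$-action), and graded injectivity is an all-at-once extension statement, not a degreewise one. The paper's mechanism, which your sketch gestures toward but never isolates, is the short exact sequence of chain complexes $0\to\hom_A(N/M,E)\xrightarrow{q^{*}}\hom_A(N,E)\xrightarrow{i^{*}}\hom_A(M,E)\to 0$ (exact because $\natural E$ is $\natural A$-injective), together with the key fact that $\hom_A(N/M,E)$ is \emph{acyclic} whenever $N/M$ is acyclic and $\natural E$ is $\natural A$-injective: one extends $f$ to a graded map $f'$ in a single step, observes that $df'-f'd$ is a cycle in $\hom_A(N/M,E)$, and subtracts a bounding homotopy. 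Both parts of the proposition turn on this one fact, which should be stated explicitly. (A small separate point: your claim that the trivial cofibrations of the ICM and projective structures coincide is false --- the ICM structure has strictly more --- but it is also unneeded, since ``fibration in the ICM structure $=$ right lifting property against monomorphisms that are quasi-isomorphisms'' is immediate from the definition of that structure.)
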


\begin{proof} (1) To show that $E$ is fibrant in $\ma$, we consider an acyclic cofibration 
$i:M\xrightarrow \sim N$
 and a morphism $f:M\to E$ in $\ma$, and prove that $f$ extends over $N$.  Since $i$ is an injection and a quasi-isomorphism, there is a short exact sequence of $A$-module morphisms
$$0\to M\xrightarrow i N\xrightarrow qN/M \to 0,$$ 
with $N/M$ acyclic.  Let $\hom _{A}(-,-)$ denote the natural enrichment of $\ma$ over $\ch$.  The injectivity of $\natural E$ implies that there is an induced short exact sequence of chain complexes
\begin{equation} \label{eqn:ses2}
0\to \hom _{A}(N/M,E) \xrightarrow {q^{*}} \hom _{A}(N, E)\xrightarrow {i^{*}} \hom _{A}(M,E)\to 0.
\end{equation}

Since $i^{*}$ is surjective, there is a morphism of $\natural A$-modules $f':N\to E$ such that $f'\circ i= f$.  Note that
$$i^{*}(df'-f'd)=df-fd=0,$$
i.e., $df'-f'd\in \ker i^{*}=\im q^{*}$.  There exists therefore a unique $A$-linear morphism $\theta: N/M\to E$, homogeneous of degree $-1$, such that $q^{*}(\theta)=df'-f'd$, whence $q^{*}(d\theta+\theta d)=0$.  Since $q^{*}$ is injective, $d\theta+\theta d=0$, i.e., $\theta$ is a cycle in $\hom _{A}(N/M, E)$, which is acyclic, as $N/M$ is acyclic and $\natural E$ is injective.  It follows that there is an $A$-linear morphism $g:N/M\to E$, homogeneous of degree $0$, such that $dg-gd =\theta$.  Setting $\hat f= f'-gq$, we obtain a chain map of $A$-modules such that $\hat f\circ i =f$.
\medskip 

\noindent (2) The proof of this claim is very similar to that of (1).  Recalling that $\path (E)=(E\oplus \si E, D)$, we see that if
\begin{equation}\label{eqn:lifting}
\xymatrix{ M\ar[r]^(0.4){f}\ar [d]_{i}^{\sim}&\path (E) \ar [d]^{e_{E}}\\ 
		N\ar [r]^{g}& E}
\end{equation}
is a commuting diagram in $\ma$, then there is some $A$-linear morphism $\Upsilon: M\to E$ of degree $+1$ such that  $f=(gi, \si  \Upsilon)$, which implies that $d\Upsilon- \Upsilon d= gi$, i.e., $\Upsilon$ is a contracting homotopy for $gi$.  Solving the lifting problem for the diagram (\ref{eqn:lifting}) is therefore equivalent to establishing the existence of an $A$-linear morphism $\widehat \Upsilon:N\to E$ of degree $+1$ such that $\widehat\Upsilon \circ i= \Upsilon$ and $d\widehat\Upsilon- \widehat\Upsilon d= g$.

To prove that $\widehat \Upsilon$ exists, we proceed as in part (1), applying the short exact sequence (\ref{eqn:ses2}) to prove that some extension of $\Upsilon$ to $N$ exists, then using the acyclicity of $\hom _{A}(N/M,E)$ to correct the differential of the extension.
 \end{proof}
  The next lemma, which follows easily from old work on filtered rings and modules, lies behind the conditions we have imposed on $R$ and $A$, as it implies that, under the hypotheses of Theorem \ref{thm:fib-repl},  the right $A$-module underlying $V$ is fibrant in the ICM structure on $\ma$.

\begin{lem}\label{lem:free}  Let $A$ be graded $R$-algebra.  If $M$ is an $A$-bimodule such that 
\begin{itemize}
\item as a left $A$-module, $M$ is free  on a generating graded module $X$, and
\item the right $A$-action on $M$ satisfies  
$$(a\otimes x)\cdot b-(-1)^{mn}ab\otimes x\in A\otimes X_{<m}$$ 
for all $a\in A$, $x\in X_{m}$, $b\in A_{n}$ and $m,n\geq 0$,
\end{itemize}
then $M$ is filtered-free and therefore free as a right $A$-module.
\end{lem}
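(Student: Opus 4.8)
The plan is to build an explicit exhaustive filtration of $M$ by right $A$-submodules whose associated graded pieces are free right $A$-modules, and then to promote this filtered-free structure to an honest freeness statement. Write $M=A\otimes X$ with its given left $A$-module structure, and for each $p\geq 0$ set $X_{\leq p}=\bigoplus_{m\leq p}X_m$ and $F_pM=A\otimes X_{\leq p}$. First I would check that each $F_pM$ is a right $A$-submodule. Given $x\in X_m$ with $m\leq p$, an element $a\in A$, and a homogeneous $b\in A_n$, the hypothesis reads $(a\otimes x)\cdot b=(-1)^{mn}\,ab\otimes x+\xi$ with $\xi\in A\otimes X_{<m}$; the leading term lies in $A\otimes X_m$ and $\xi$ in $A\otimes X_{<m}$, both inside $A\otimes X_{\leq p}$. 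Since $X$ is nonnegatively graded, $F_{-1}M=0$ and $\bigcup_pF_pM=M$, so this is a bounded-below, exhaustive filtration by right $A$-modules.

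Next I would identify the associated graded. For $x\in X_p$ the same formula, now read modulo $F_{p-1}M=A\otimes X_{<p}$, shows that the induced right action on $\mathrm{gr}_pM=F_pM/F_{p-1}M\cong A\otimes X_p$ is $\overline{a\otimes x}\cdot b=(-1)^{pn}\,\overline{ab\otimes x}$, the correction term $\xi$ having been absorbed into $F_{p-1}M$. The Koszul swap $a\otimes x\mapsto(-1)^{|a|p}\,x\otimes a$ is then, after a short sign check, a right $A$-module isomorphism $\mathrm{gr}_pM\xrightarrow{\ \cong\ }X_p\otimes A$ onto the free right $A$-module generated by $X_p$. Hence every $\mathrm{gr}_pM$ is free, which is precisely the assertion that $M$ is filtered-free.

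Finally I would make the passage from filtered-free to free explicit rather than merely cite it. Define $\psi\colon X\otimes A\to M$ on the free right $A$-module by $\psi(x\otimes a)=(1\otimes x)\cdot a$; associativity of the right action makes $\psi$ right $A$-linear, and $\psi(X_{\leq p}\otimes A)\subseteq F_pM$ because $1\otimes x\in F_pM$ for $x\in X_{\leq p}$ and $F_pM$ is a right submodule. By the computation above, $\mathrm{gr}_p\psi$ is precisely the inverse of the swap isomorphism, hence an isomorphism for every $p$. As both filtrations are bounded below and exhaustive, an induction on $p$ applying the five lemma to the evident map of short exact sequences $0\to F_{p-1}\to F_p\to\mathrm{gr}_p\to0$ shows that $\psi$ restricts to an isomorphism $X_{\leq p}\otimes A\xrightarrow{\cong}F_pM$; taking the union over $p$ yields that $\psi$ itself is an isomorphism, so $M\cong X\otimes A$ is free as a right $A$-module. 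The only genuine subtlety I anticipate is the sign bookkeeping---keeping the Koszul signs consistent among the hypothesis, the associated-graded action, and the swap isomorphism---since the closing ``isomorphism on $\mathrm{gr}$ implies isomorphism'' step requires no completeness hypothesis once the filtration is known to be bounded below and exhaustive.
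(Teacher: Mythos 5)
Your proof is correct, and its core is identical to the paper's: both arguments endow $M$ with the filtration $F_pM=A\otimes X_{\leq p}$, use the hypothesis on the right action to see that each $F_pM$ is a right $A$-submodule (the leading term stays in filtration $p$, the error term drops filtration), and identify the associated graded $\mathrm{gr}_pM$, via the Koszul swap, with the free right $A$-module on $X_p$. Where you genuinely diverge is in the passage from filtered-free to free: the paper disposes of this step in one line by citing \cite[Appendix: Proposition 2]{sridharan} on filtered rings and modules, whereas you prove it directly, by exhibiting the candidate isomorphism $\psi\colon X\otimes A\to M$, $\psi(x\otimes a)=(1\otimes x)\cdot a$, checking that it is right $A$-linear and filtration-preserving, observing that $\mathrm{gr}_p\psi$ inverts the swap isomorphism, and then running a five-lemma induction up the filtration followed by a passage to the union. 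Your sign checks are right: with $a=1$ the hypothesis gives $(1\otimes x)\cdot b\equiv(-1)^{pn}\,b\otimes x \pmod{F_{p-1}M}$ for $x\in X_p$, $b\in A_n$, which is exactly the statement that $\mathrm{gr}_p\psi$ is the inverse of the swap. What each approach buys: the paper's version is shorter and leans on a classical reference; yours is self-contained, makes explicit where exhaustiveness and boundedness below of the filtration are used (and that no completeness hypothesis is needed), and produces the concrete basis map $\psi$, which is in effect the content that the citation encapsulates. Both have the same generality, so this is a matter of packaging rather than of mathematical substance.
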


\begin{proof} Endow $A$ with an increasing, multiplicative filtration, given by $F^{p}A=A$ for all $p\geq 0$ and $F^{p}A=0$ for all $p<0$.  Filter $M$ as well, by $F^{p}M=A\otimes X_{\leq p}$ for all $p\geq 0$ and $F^{p}M=0$ for all $p<0$.  Note  that the right $A$-action on $M$ induces an $R$-linear map
$$F^{p}M \otimes F^{q}A \to F^{p}M$$
for all $p,q$.  

Let $E^{0}_{*}(A)$ and $E^{0}_{*}(M)$ denote the graded $R$-modules associated to the filtrations above.  It is clear that $E^{0}_{*}(A)$ is a graded $R$-algebra concentrated in degree $0$, while $E^{0}_{*}(M)$ is naturally a \emph{free} graded, right  $E^{0}_{*}(A)$-module, on the generating graded module $X$.  It follows then from  \cite[Appendix: Proposition 2]{sridharan} that $M$ is free as a right $A$-module, on a generating graded module isomorphic to $X$.
\end{proof}

The following consequence of Lemma \ref{lem:free} is crucial in the proof of Theorem \ref{thm:fib-repl}.

\begin{cor}\label{cor:injobj} Under the hypotheses of Theorem \ref{thm:fib-repl}, the right $\natural A$-module  
$$\natural (M\otimes_{A}\overline V^{\otimes_{A}n})$$
is injective for all $n\geq 0$.
\end{cor}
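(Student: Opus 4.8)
The plan is to reduce the injectivity assertion to a freeness assertion and then to prove freeness by a filtration argument modeled on Lemma~\ref{lem:free}. Since $\natural A$ is connected and nearly Frobenius and $R$ is Artinian or a Frobenius ring over a field, a graded right $\natural A$-module is injective if and only if it is projective if and only if it is flat, by \cite[Theorem 2.7]{moore-peterson2}. As free modules are projective, it therefore suffices to show that $\natural\big(M\otimes_A\overline V^{\otimes_A n}\big)$ is \emph{free} as a right $\natural A$-module for every $n\geq 0$. For $n=0$ this is precisely the hypothesis that $\natural M$ is $\natural A$-free, so I may assume $n\geq 1$.

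First I would record the left-module structure. Since $V$, and hence its coaugmentation coideal $\overline V$, is semifree as a \emph{left} $A$-module on $\overline X$ (with $\overline X_0=0$ and $\overline X_n=X_n$ for $n\geq 1$), an easy induction using the left $A$-module isomorphism $\overline V\cong A\otimes_R\overline X$ shows that
$$\overline V^{\otimes_A n}\cong A\otimes_R\overline X^{\otimes n}$$
as left $A$-modules, where $\overline X^{\otimes n}=\overline X\otimes_R\cdots\otimes_R\overline X$. Consequently
$$\natural\big(M\otimes_A\overline V^{\otimes_A n}\big)\cong \natural M\otimes_R\overline X^{\otimes n}$$
as graded $R$-modules, the right $\natural A$-action being the (twisted) one inherited from the rightmost tensor factor of $\overline V^{\otimes_A n}$. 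Note that $\overline X^{\otimes n}$ is degreewise $R$-free and bounded below, since each $X_k$ is $R$-free and finitely generated.

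Now I would filter. As in the proof of Lemma~\ref{lem:free}, I would filter $A$ trivially ($F^pA=A$ for $p\geq 0$ and $F^pA=0$ for $p<0$) and filter $P:=M\otimes_A\overline V^{\otimes_A n}$ by total degree in the $\overline X^{\otimes n}$-factors. The hypothesis on the right $A$-action on $V$, which descends to $\overline V$ because $\overline X_0=0$, guarantees that the right action respects this filtration, so that the associated graded $E^0_*(P)$ is a right $E^0_*(A)\cong\natural A$-module. The crucial computation is that, modulo lower filtration, the right action of $b\in\natural A$ \emph{cascades} through the tensor factors: applying the relation $(a\otimes x)\cdot b\equiv(-1)^{|x||b|}ab\otimes x$ (valid modulo $A\otimes\overline X_{<|x|}$) in the last factor, absorbing the resulting left multiplication across the $\otimes_A$ into the penultimate factor, and iterating, one finds
$$(x_0\otimes x_1\otimes\cdots\otimes x_n)\cdot b\equiv \pm\,(x_0\cdot b)\otimes x_1\otimes\cdots\otimes x_n$$
in $E^0_*(P)$, with sign dictated by the Koszul rule. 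Thus on the associated graded the $\overline X^{\otimes n}$-factors are inert and $b$ acts solely through the right $\natural A$-action on $M$, so that $E^0_*(P)\cong\natural M\otimes_R\overline X^{\otimes n}$ with $\natural A$ acting only on $\natural M$. Since $\natural M$ is free and $\overline X^{\otimes n}$ is $R$-free, a Koszul reordering exhibits $E^0_*(P)$ as a free right $\natural A$-module; applying \cite[Appendix: Proposition 2]{sridharan} exactly as in Lemma~\ref{lem:free} then shows $\natural P$ is itself free, and therefore injective.

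The main obstacle is the cascade computation on the associated graded: one must check carefully that each application of the right-action relation preserves the leading $\overline X^{\otimes n}$-degree while pushing the correction terms into strictly lower filtration, track the Koszul signs accumulated as $b$ is transported leftward across $n$ tensor factors, and confirm that the left multiplications produced at each stage are genuinely absorbed by the $\otimes_A$ into the right action on the adjacent factor, terminating in the right $A$-action on $M$. Verifying that the filtered hypothesis on $V$ descends to $\overline V$ and that the filtration on $P$ is exhaustive and bounded below, so that Sridharan's proposition applies, is routine by comparison.
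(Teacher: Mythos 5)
Your proposal is correct, but it takes a genuinely different route from the paper's. The paper's proof is a two-line reduction: it applies Lemma~\ref{lem:free} \emph{once}, to $\overline V$ alone, to conclude that $\natural\overline V$ is free as a right $\natural A$-module, and then propagates freeness through the tensor powers by the elementary observation that if $N$ is a free right $\natural A$-module, say $N\cong\bigoplus_\alpha \natural A$ (with shifts), then $N\otimes_A W\cong\bigoplus_\alpha W$ as right $\natural A$-modules for \emph{any} bimodule $W$; so right-freeness of $\natural M$ and of $\natural\overline V$ gives right-freeness of $\natural(M\otimes_A\overline V^{\otimes_A n})$ by induction on $n$, and the nearly Frobenius hypothesis converts free to injective exactly as you say. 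You instead re-run the filtration argument underlying Lemma~\ref{lem:free} directly on the $n$-fold tensor product, which forces the ``cascade'' computation: transporting $b$ leftward across all $n$ factors of $\overline X$, checking that every correction term drops strict filtration, and tracking Koszul signs, before invoking Sridharan's proposition. That computation does go through (the correction terms lie in strictly lower total $\overline X$-degree at every stage, and the Koszul-twisted module $\natural M\otimes_R\overline X^{\otimes n}$ with action concentrated on $M$ is indeed free, via the sign-reordering isomorphism you indicate), so your argument is sound; it even makes the associated-graded structure of the full tensor product explicit, which the paper never needs. But all of that work is precisely what the paper's factorization avoids: once $\natural\overline V$ is known to be right-free, tensoring over $A$ trivially preserves right-freeness, so no new filtration or sign bookkeeping is required. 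Note also that both arguments silently use that the right-action hypothesis on $V$ descends to $\overline V$ (so that Lemma~\ref{lem:free}, or your filtration, applies to $\overline V$ rather than $V$); you flag this as routine, and the paper treats it the same way.
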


\begin{proof} By Lemma \ref{lem:free}, $\natural \overline V$ is a free right $\natural A$-module, which implies that each $\natural (M\otimes_{A}\overline V^{\otimes_{A}n})$ is also $\natural A$-free on the right, since $M$ is a free right $\natural A$-module.  As we have assumed that $\natural A$ is nearly Frobenius, we can conclude that each $\natural (M\otimes_{A}\overline V^{\otimes_{A}n})$ is $\natural A$-injective. 
\end{proof}

We prove Theorem \ref{thm:fib-repl} inductively, repeatedly applying the following simple observation, the easy proof of which we leave to the reader.  Recall that pullbacks in $\mav$ are computed in $\ma$ (Lemma \ref{lem:lim-mav}).

\begin{lem}\label{lem:pullback}  For any morphism $f:M\to N$ in $\mav$ such that $N_{0}=0$, the pullback of $f$ and $e_{N}:\path (N)\to N$ is $(M\oplus \si N,  D_{f})$, where $D_{f}x=dx +\si f(x)$ and $D_{f}\si y=-\si (dy)$ for all $x\in M$ and $y\in N$, where $d$ denotes the differentials of  both $M$ and $N$.
\end{lem}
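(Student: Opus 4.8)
The plan is to compute the pullback directly, exploiting the fact that it is created in the underlying category. By Lemma \ref{lem:lim-mav}, pullbacks in $\mav$ are created in $\ma$, and pullbacks in $\ma$ are in turn created in $\ch$; moreover the hypothesis $N_{0}=0$ is exactly what guarantees that $\si N$, and hence $\path(N)=(N\oplus \si N, D)$, are defined. So I would first identify the underlying graded $R$-module of the pullback with the ordinary fibre product
$$P=\{(x,z)\in M\times \path(N)\mid f(x)=e_{N}(z)\},$$
where $e_{N}:\path(N)\to N$ is the natural projection that kills the $\si N$-summand.

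Writing a general element of $\path(N)$ as $n+\si y$ with $n,y\in N$, the defining condition $f(x)=e_{N}(n+\si y)=n$ forces $n=f(x)$, so every element of $P$ has the form $(x, f(x)+\si y)$ and is determined by the pair $(x,y)$. I would thus exhibit the graded isomorphism
$$M\oplus \si N \xrightarrow{\ \cong\ } P,\qquad x+\si y\longmapsto (x, f(x)+\si y),$$
and transport the componentwise differential $(d_{M},D)$ of $M\times \path(N)$ through it. Since $f$ is a chain map, a one-line computation gives
$$D\big(f(x)+\si y\big)=Df(x)+D\si y=\big(d f(x)+\si f(x)\big)-\si(dy)=f(dx)+\si f(x)-\si\, dy,$$
which, paired with $d_{M}x=dx$, represents the element $dx+\si\big(f(x)-dy\big)$ of $M\oplus \si N$. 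Setting $y=0$ and then $x=0$ recovers precisely $D_{f}x=dx+\si f(x)$ and $D_{f}\si y=-\si(dy)$, as claimed.

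Finally I would note that the right $A$-action and the $V$-coaction on $P$ are simply the restrictions of the (componentwise) structures on $M\times \path(N)$, which under the isomorphism above are visibly the evident structures on $M\oplus \si N$ inherited from $M$ and $N$; this is the point at which one uses that $\path$ and the pullback are formed inside $\mav$ rather than merely in $\ch$. There is no genuine obstacle here beyond routine bookkeeping: the only two points requiring care are the Koszul sign in $D\si y=-\si(dy)$ and the correct identification of $e_{N}$ as the projection annihilating the suspended summand. The mathematical content reduces to the displayed differential computation.
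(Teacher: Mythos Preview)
Your proposal is correct and is precisely the routine verification the paper has in mind; the paper itself omits the proof entirely, stating only that it is easy and leaving it to the reader. Your identification of $e_{N}$ as the projection killing the $\si N$-summand, the fibre-product description, and the transported differential all check out.
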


\begin{proof}[Proof of Theorem \ref{thm:fib-repl}] Note that any signs not given explicitly in this proof are always consequences of the Koszul rule.  We use $s^{-k}$ to denote the endofunctor on $\ch$ given by $k$ iterations of  $s^{-1}$.

For all $n\geq 0$, consider the right $A$-module
$$B^{n}=\big(s^{-n}(M\otimes _{A}\overline V^{\otimes_{A}n+1}), \beta_{n}\big),$$
where 
$$\beta_{n}s^{-n}=(-1)^{-n}s^{-n}\bigg (d\otimes _{A}\overline V^{\otimes_{A} n+1}+\sum _{j=0}^{n}M\otimes _{A}\overline V^{\otimes_{A} j}\otimes_{A}d\otimes_{A}\overline V^{\otimes_{A} n-j}\bigg ).$$
Corollary \ref{cor:injobj} implies that $\natural B^{n}$ in $\natural A$-injective and therefore, by Proposition \ref{prop:fib-ma}, $B^{n}$ is fibrant and $e_{B^{n}}$ is a fibration in the ICM structure on $\ma$ for all $n$.

To begin the recursive construction of $\Om _{A}(M;V;V)$, let 
$$E^{0}= F_{\K_{V}}M=\bigg (M\otimes_{A}V,d_{\Om}^{(0)}\bigg ),$$
where
$$d_{\Om}^{(0)}=d\otimes_{A}V +M\otimes_{A}d.$$
Let 
$$f^{1}=\delta\otimes _{A}V + M\otimes _{A}\Delta: E^{0} \to F_{\K_{V}}B^{0},$$
where we are implicitly composing with the projection $V\to \overline V$ in the middle factor.
A simple calculation shows that $f^{1}$ is a chain map.  Moreover, it is a morphism of $V$-comodules, as it is a sum of two such.

 According to Lemma \ref{lem:pullback}, the pullback of $f^{1}$ and of $F_{\K_{V}}e_{B^{0}}$ is
$$E^{1}= \bigg( (M\otimes _{A}V) \oplus \si (M\otimes _{A}\overline V\otimes _{A}V), d_{\Om}^{(1)}\bigg),$$
where
\begin{align*}
d_{\Om}^{(1)}(x\otimes w)=&dx\otimes w \pm x\otimes dw\\
					&+\si (x_{i}\otimes v^{i}\otimes w + x\otimes w_{j}\otimes w^{j})
\end{align*}
and 
$$d_{\Om}^{(1)}\si (x\otimes v\otimes w)=-\si (dx\otimes v\otimes w\pm x\otimes dv\otimes w \pm x\otimes v\otimes dw),$$
i.e., 
on $M\otimes_{A}V$,
$$d_{\Om}^{(1)}=d\otimes _{A}V+ M\otimes _{A}d+ \si (\delta\otimes _{A}V + M\otimes _{A}\Delta),$$
while on $M\otimes _{A}\overline V\otimes _{A}V$,
$$d_{\Om}^{(1)}\si = -\si (d\otimes _{A}\overline V\otimes _{A}V+ M\otimes _{A}d\otimes _{A} V+ M\otimes _{A}\overline V\otimes _{A}d).$$
The obvious projection map $p^{1}:E^{1}\to E^{0}$ is a morphism in $\mav$, since it is the map given by pulling $F_{\K_{V}}e_{B^{0}}$ back  along $f^{1}$.

The inductive step of the construction goes as follows.  Suppose that for some $N> 1$, we have constructed
$$E^{n}= \bigg( (M\otimes _{A}V) \oplus \bigoplus_{k=1}^{n} s^{-k} (M\otimes _{A}\overline V^{\otimes_{A} k}\otimes _{A}V), d_{\Om}^{(n)}\bigg),$$
for all $1\leq n <N$, where $d_{\Om}^{(n)}$ is defined so that on $M\otimes _{A}\overline V^{\otimes_{A} k}\otimes _{A}V$,
\begin{align*}
d_{\Om}^{(n)}s^{-k}=&(-1)^{-k}s^{-k}\big (d\otimes _{A}\overline V^{\otimes_{A} k}\otimes _{A }V+\sum _{j=0}^{k-1}M\otimes _{A}\overline V^{\otimes_{A} j}\otimes_{A}d\otimes_{A}\overline V^{\otimes_{A} k-j-1}\otimes _{A}V\\
&\quad \qquad \qquad +M\otimes _{A}\overline V^{\otimes_{A} k}\otimes_{A}d\big )\\
&+s^{-(k+1)}\big (\delta \otimes _{A}\overline V^{\otimes_{A} k}\otimes _{A }V+ \sum _{j=0}^{k-1}M\otimes _{A}\overline V^{\otimes_{A} j}\otimes_{A}\Delta\otimes _{A}\overline V^{\otimes_{A} k-j-1}\otimes _{A}V\\
&\quad \qquad \qquad +M\otimes _{A}\overline V^{\otimes_{A} k}\otimes_{A}\Delta \big )
\end{align*}
for all $0\leq k <n$, while on $M\otimes _{A}\overline V^{\otimes_{A} n}\otimes _{A}V$,
\begin{align*}
d_{\Om}^{(n)}s^{-n}=&(-1)^{-n}s^{-n}\big (d\otimes _{A}\overline V^{\otimes_{A} n}\otimes _{A }V+\sum _{j=0}^{n-1}M\otimes _{A}\overline V^{\otimes_{A} j}\otimes_{A}d\otimes_{A}\overline V^{\otimes_{A} n-j-1}\otimes _{A}V\\
&\quad \qquad \qquad +M\otimes _{A}\overline V^{\otimes_{A} n}\otimes_{A}d\big ),
\end{align*}
where we are implicity composing with the projection $V\to \overline V$ in the middle factors.
The obvious projection maps $p^{n}:E^{n}\to E^{n-1}$ are clearly morphisms in $\mav$, for all $n<N$.

Define $f^{N}:E^{N-1}\to F_{\K_{V}}B^{N-1}$ so that $f^{N}s^{-k}=0$ on $M\otimes _{A}\overline V^{\otimes_{A} k}\otimes _{A}V$ for all $k<N-1$, while on $M\otimes _{A}\overline V^{\otimes_{A} N-1}\otimes _{A}V$
\begin{align*}
f^{N}s^{-N+1}=&s^{-N+1}\big (\delta \otimes _{A}\overline V^{\otimes_{A} k}\otimes _{A }V+ \sum _{j=0}^{N-1}M\otimes _{A}\overline V^{\otimes_{A} j}\otimes_{A}\Delta\otimes _{A}\overline V^{\otimes_{A} N-j-2}\otimes _{A}V\\
&\quad \qquad \qquad +M\otimes _{A}\overline V^{\otimes_{A} N-1}\otimes_{A}\Delta \big ),
\end{align*}
where we are implicitly composing with the projection $V\to \overline V$ in the middle factors, as usual.  As in the case of $f^{1}$, it is easy to see that $f^{N}$ is a morphism of $V$-comodules. It is also a chain map, since $\Delta$ is coassociative and $(\delta\otimes_{A}V)\delta= (M\otimes _{A}\Delta)\delta$.

Let $E^{N}$ denote the pullback of $f^{N}$ and $F_{\K_{V}}e_{B^{N-1}}$. By Lemma \ref{lem:pullback},
$$E^{N}= \bigg( (M\otimes _{A}V) \oplus \bigoplus_{k=1}^{N} s^{-k} (M\otimes _{A}\overline V^{\otimes_{A} k}\otimes _{A}V), d_{\Om}^{(N)}\bigg),$$
where the differential $d_{\Om}^{(N)}$ satisfies equations analogous to those satisfied by $d_{\Om}^{(n)}$ for all $n<N$. Moreover, the obvious projection map $p^{N}:E^{N}\to E^{N-1}$, which comes from the pullback, is a morphism in $\mav$.

Let $\mathsf{Fib}$ denote the class of  fibrations in the ICM structure on $\ma$.  Since every $\natural B^{n}$ is $\natural A$-injective,  $e_{B^{n}}\in \mathsf{Fib}$ for all $n\geq 0$, by Proposition \ref{prop:fib-ma}. We have therefore constructed a tower in $\mav$
$$\cdots \to E^{n}\xrightarrow {p^{n}} E^{n-1}\to \cdots \to E^{1}\xrightarrow {p^{0}} E^{0},$$
where each $p^{n}$ is obtained by pulling back a morphism in $F_{\K_{V}}\big (\mathsf{Fib}\big)$, whence the composition of the tower
$$\lim_{n} E^{{n}}\to E^{{0}}$$
is in $\mathsf{Post}_{F_{\K_{V}}(\mathsf{Fib})}$ and is therefore a fibration in the induced model structure on $\mav$.

To conclude we show that $\lim_{n}E^{n}=\Om_{A}(M;V;V)$.  Observe that 
$$\natural E^{n}\cong (M\otimes _{A}V) \oplus \bigoplus_{k=1}^{n} M\otimes _{A}(\si\overline V)^{\otimes_{A} k}\otimes _{A}V.$$
Let $q^{n}:\Om_{A}(M;V;V)\to E^{n}$
denote the obvious quotient map, which is easily seen to be a chain map that respects both the right $A$-action and the right $V$-coaction. Moreover, $p^{n}q^{n}=q^{n-1}$ for all $n$.  It remains therefore only to show that $\Om_{A}(M;V;V)$, endowed with the maps $q^{n}$, satisfies the desired universal property.

Let $N\in \ob \mav$, and let $\{g^{n}:N\to E^{n}\mid n\geq 0\}$ be a set of morphisms in $\mav$ such that $p^{n}g^{n}=g^{n-1}$ for all $n\geq 1$.  Note that $(M\otimes _{A}(\si\overline V)^{\otimes_{A} k}\otimes _{A}V)_{j}=0$ for all $j<k$ and for all $k$, since $(\si \overline V)_{0}=0$ by hypothesis.  It follows that if $y\in N_{n}$, then $g_{n+k}(y)=g_{n}(y)$ for all $k\geq 0$. We can therefore define $g:N\to \Om_{A}(M;V;V)$ by 
$$y\in N_{n}\Longrightarrow g(y)=g_{n}(y),$$
obtaining thus a morphism in $\mav$ such that $q^{n}g=g^{n}$, which is clearly unique.
\end{proof}

\bibliographystyle{amsplain}
\bibliography{cmc}

 \end{document}